\documentclass[11pt,reqno]{amsart}

\usepackage[T1]{fontenc}
\usepackage[a4paper,margin=29mm]{geometry}
\usepackage{lmodern}
\usepackage{microtype}
\microtypesetup{expansion=false}
\usepackage{amsmath,amssymb,amsthm}
\usepackage{mathtools}
\usepackage{mathrsfs}
\usepackage{enumitem}
\usepackage{tikz-cd}
\usepackage{xcolor}
\usepackage{aliascnt}
\usepackage{hyperref}
\usepackage[nameinlink,capitalize,noabbrev]{cleveref}

\hypersetup{
  colorlinks=true,
  linkcolor=blue!55!black,
  citecolor=green!40!black,
  urlcolor=blue!65!black
}

\numberwithin{equation}{section}

\newtheorem{theorem}{Theorem}[section]

\newaliascnt{proposition}{theorem}
\newtheorem{proposition}[proposition]{Proposition}
\aliascntresetthe{proposition}

\newaliascnt{lemma}{theorem}
\newtheorem{lemma}[lemma]{Lemma}
\aliascntresetthe{lemma}

\newaliascnt{corollary}{theorem}
\newtheorem{corollary}[corollary]{Corollary}
\aliascntresetthe{corollary}

\newaliascnt{assumption}{theorem}
\newtheorem{assumption}[assumption]{Assumption}
\aliascntresetthe{assumption}

\theoremstyle{definition}

\newaliascnt{definition}{theorem}
\newtheorem{definition}[definition]{Definition}
\aliascntresetthe{definition}

\newaliascnt{example}{theorem}

\aliascntresetthe{example}

\theoremstyle{remark}

\newaliascnt{remark}{theorem}
\newtheorem{remark}[remark]{Remark}
\aliascntresetthe{remark}

\crefname{proposition}{Proposition}{Propositions}
\Crefname{proposition}{Proposition}{Propositions}
\crefname{lemma}{Lemma}{Lemmas}
\Crefname{lemma}{Lemma}{Lemmas}
\crefname{corollary}{Corollary}{Corollaries}
\Crefname{corollary}{Corollary}{Corollaries}
\crefname{assumption}{Assumption}{Assumptions}
\Crefname{assumption}{Assumption}{Assumptions}
\crefname{definition}{Definition}{Definitions}
\Crefname{definition}{Definition}{Definitions}
\crefname{example}{Example}{Examples}
\Crefname{example}{Example}{Examples}
\crefname{remark}{Remark}{Remarks}
\Crefname{remark}{Remark}{Remarks}

\DeclareMathOperator{\divisor}{div}

\DeclareMathOperator{\Gr}{Gr}

\DeclareMathOperator{\Spec}{Spec}

\newcommand{\A}{\mathbb A}

\newcommand{\Pj}{\mathbb P}

\newcommand{\bH}{\mathbb H}
\newcommand{\DR}{\operatorname{DR}}

\newcommand{\StacksTag}[1]{\href{https://stacks.math.columbia.edu/tag/#1}{Tag~#1}}
 
\title[Irregular Hodge Bundles with Fixed Pole Orders]
{Irregular Hodge Bundles for Deligne--Mumford\\
Landau--Ginzburg Families with Fixed Pole Orders}
\author{Haoxu Wang}
\address{Morningside Center of Mathematics, Academy of Mathematics and
Systems Science, Chinese Academy of Sciences, Beijing 100190, China}
\email{krassotkinkolya@gmail.com}
\subjclass[2020]{Primary 14F40; Secondary 14A20, 14M25}
\keywords{irregular Hodge filtration, Deligne--Mumford stacks,
Landau--Ginzburg families, Katz--Oda connection, coefficient
invariance, toric stacks}
\hypersetup{
  pdftitle={Irregular Hodge Bundles for Deligne--Mumford
    Landau--Ginzburg Families with Fixed Pole Orders},
  pdfauthor={Haoxu Wang},
  pdfsubject={Irregular Hodge theory in families},
  pdfkeywords={irregular Hodge filtration, Deligne--Mumford stacks,
    Landau--Ginzburg families, Katz--Oda connection, coefficient
    invariance, toric stacks}
}
\date{}

\begin{document}
\begin{abstract}
We prove that the canonical irregular Hodge filtrations in smooth
families of Deligne--Mumford Landau--Ginzburg models with fixed pole
orders assemble into filtered algebraic vector bundles.  More
precisely, twisted de Rham cohomology forms a vector bundle carrying
a canonical algebraic integrable connection, every rational
irregular Hodge level is a subbundle with locally free quotient, and
every rational graded piece is a vector bundle.  These constructions
commute with arbitrary finite-type base change, and the connection
satisfies shifted Griffiths transversality.  Consequently, all
filtered and graded dimensions are locally constant.

As an application, for a smooth quasiprojective toric
Deligne--Mumford stack and every fixed Newton polytope at infinity
satisfying the support condition, these conclusions hold over the
full coefficient locus of Laurent polynomials that are nondegenerate
at infinity.  In particular, their filtered and graded dimensions
are independent of the coefficients.  When the resulting stacky
fans form a Clarke dual pair, combining coefficient invariance with
Harder--Lee's combinatorial formula shows that the ordinary and
orbifold irregular Hodge numbers of every potential in the full
nondegenerate coefficient locus are computed by their combinatorial
\(\Xi\)-complex.
\end{abstract}

\maketitle
\setcounter{tocdepth}{1}
\tableofcontents

\section{Introduction}
\label{sec:introduction}

\subsection{The relative irregular Hodge problem}
\label{subsec:introduction-relative-problem}

For a smooth proper family, de Rham cohomology forms a vector bundle
with the Gauss--Manin connection, while the Hodge filtration is
realized by subbundles satisfying Griffiths transversality.  The aim
of this paper is to establish an irregular analogue for smooth
families of Deligne--Mumford Landau--Ginzburg models.

Let \(\mathscr U\) be a smooth separated complex Deligne--Mumford
stack and let
\[
 w\colon\mathscr U\longrightarrow\A^1
\]
be a regular function.  The exponentially twisted differential
\(d+dw\wedge\) defines
\[
 H_{\mathrm{dR}}^k(\mathscr U,w)
 :=
 \bH^k\!\left(
  \mathscr U,
  (\Omega_{\mathscr U}^\bullet,d+dw\wedge)
 \right).
\]
For smooth quasiprojective varieties, Yu introduced a decreasing
rationally indexed irregular Hodge filtration on these groups, and
Esnault--Sabbah--Yu proved the corresponding \(E_1\)-degeneration
\cite{Yu,ESY}.  Sabbah--Yu and Sabbah subsequently placed the
construction in the theory of exponentially twisted mixed Hodge
modules and irregular Hodge modules
\cite{SabbahYu,SabbahIHT}.

Stacky and orbifold versions of the construction have been developed
in the toric setting by Harder--Lee, while compactification
independence and \(E_1\)-degeneration for general Deligne--Mumford
Landau--Ginzburg models are established in the two preceding papers
in this series
\cite{HarderLee,WangCompactification,WangEOne}.

Harder--Lee also study variation in a distinguished one-parameter
toric family associated with a tropical weight
\cite[\S\S8--9]{HarderLee}.  Over the punctured parameter disc they
construct the logarithmic Gauss--Manin connection and establish the
filtered comparison needed for coefficient invariance; their
extension across the origin is a quasi-stable degeneration whose
central fibre is analyzed by tropical methods.

The present paper develops the smooth fixed-pole part of this picture
as a general relative algebraic theory.  For families with fixed pole
orders over an arbitrary smooth finite-type base, we prove that
twisted de Rham cohomology, the canonical irregular Hodge levels, and
their rational graded pieces form vector bundles compatible with base
change.  The total cohomology carries an integrable connection
satisfying shifted Griffiths transversality.  Unlike the degeneration
theory of Harder--Lee, the results here concern the smooth parameter
locus and make no assertion about a degenerate central fibre.

\subsection{Irregular Hodge bundles in families with fixed pole
orders}
\label{subsec:introduction-fixed-pole-bundles}

Let \(S\) be a smooth connected finite-type complex scheme.  A smooth
compactified Deligne--Mumford Landau--Ginzburg family with fixed pole
orders consists of a proper smooth morphism
\[
 \pi\colon\mathfrak X\longrightarrow S
\]
from a tame Deligne--Mumford stack, a relative labelled SNC divisor
\(\mathfrak D=\bigcup_i\mathfrak D_i\), a prescribed relative polar
divisor
\[
 \mathfrak P=\sum_i e_i\mathfrak D_i
\]
whose integers \(e_i\geq0\) are independent of the point of the base,
and a rational function \(\mathcal W\) that is regular on
\(\mathfrak U:=\mathfrak X\setminus\mathfrak D\).  The polar
divisor of \(\mathcal W_s\) is \(\mathfrak P_s\) on every geometric
fibre, and each
\((\mathfrak X_s,\mathfrak D_s,\mathcal W_s)\) is an NC rational
compactification with projective coarse space.  Both the compactified
fibre and the potential may vary with \(s\), while the labelled
boundary and the polar multiplicities are fixed over the base.  The
precise definition is given in
\cref{ass:fixed-pole-orders-dm-family}.

Our main geometric result is the following.

\begin{theorem}[Irregular Hodge bundles in a family with fixed pole
orders]
\label{thm:introduction-fixed-pole-dm-family}
\label{thm:introduction-fixed-pole-orders}
Let
\((\pi\colon\mathfrak X\to S,\mathfrak D,\mathfrak P,
\mathcal W)\) be a smooth family with fixed pole orders, and put
\(\mathfrak U=\mathfrak X\setminus\mathfrak D\).  Then:
\begin{enumerate}[label=\textup{(\roman*)}]
\item For every \(\lambda\in\mathbb Q\), the relative Yu lattice
  has perfect direct image and commutes with arbitrary finite-type
  derived base change.  At a geometric point \(s\to S\), its derived
  fibre is the derived global section complex of the fibrewise Yu
  level
  \[
   F^\lambda K_{\mathfrak X_s,\mathcal W_s}^\bullet.
  \]

\item For every \(q\), the groups
  \(H_{\mathrm{dR}}^q(\mathfrak U_s,\mathcal W_s)\) are the fibres of
  a finite locally free sheaf \(\mathcal H_S^q\) carrying a canonical
  algebraic integrable connection
  \[
   \nabla_S^q\colon
   \mathcal H_S^q
   \longrightarrow
   \Omega_{S/\mathbb C}^1\otimes\mathcal H_S^q.
  \]
  Formation of \(\mathcal H_S^q\) commutes with arbitrary finite-type
  base change.

\item For every \(q\) and \(\lambda\), the canonical level
  \[
   F_{\mathrm{irr}}^\lambda
   H_{\mathrm{dR}}^q(\mathfrak U_s,\mathcal W_s)
  \]
  is the fibre of a subbundle
  \(\mathcal I_\lambda^q\subseteq\mathcal H_S^q\) with locally free
  quotient.  Every rational graded piece
  \(\Gr_{F_{\mathrm{irr}}}^\lambda H_{\mathrm{dR}}^q\) is likewise
  the fibre of a vector bundle.  All these bundles and their
  transition maps commute with arbitrary finite-type base change.

\item The connection satisfies shifted Griffiths transversality
  \[
   \nabla_S^q(\mathcal I_\lambda^q)
   \subseteq
   \Omega_{S/\mathbb C}^1\otimes
   \mathcal I_{\lambda-1}^q.
  \]
  It is functorial under smooth base change.  Consequently, all
  total, filtered, and rational graded dimensions, and hence all
  irregular Hodge numbers, are independent of \(s\in S\).
\end{enumerate}
\end{theorem}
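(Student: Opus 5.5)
The plan is to build everything from a relative version of Yu's filtered twisted logarithmic de Rham complex on \(\mathfrak X\), and to reduce the fibrewise assertions to the results of the two preceding papers, namely compactification independence and \(E_1\)-degeneration \cite{WangCompactification,WangEOne}.

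\textbf{Step 1: relative Yu lattices and base change.} For \(\lambda\in\mathbb Q\), define the relative Yu lattice \(F^\lambda K^\bullet_{\mathfrak X/S,\mathcal W}\) by the same formula as on a single compactification. Its \(p\)-th term is
\[
\Omega^p_{\mathfrak X/S}(\log\mathfrak D)\bigl(\lfloor(p-\lambda)\mathfrak P\rfloor\bigr)
\]
for \(p\geq\lambda\), suitably truncated, with differential \(d+d\mathcal W\wedge\). Because the \(e_i\) are constant and \(\mathfrak D\) is relative SNC, each term is a locally free sheaf on \(\mathfrak X\), flat over \(S\). Its formation also commutes with base change \(T\to S\), since \(\lfloor(p-\lambda)\mathfrak P\rfloor\) restricts to \(\lfloor(p-\lambda)\mathfrak P_T\rfloor\). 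Three facts then give (i):
\begin{itemize}
\item \(\pi\) is proper;
\item \(\mathfrak X\) is tame, so pushforward to the coarse space is exact;
\item the complex is a bounded complex of \(S\)-flat coherent sheaves with \(\mathcal O_S\)-linear differentials.
\end{itemize}
Together these show that \(R\pi_*F^\lambda K\) is perfect and satisfies derived base change. The derived fibre at \(s\) is \(R\Gamma(\mathfrak X_s,F^\lambda K_{\mathfrak X_s,\mathcal W_s})\).

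\textbf{Step 2: local freeness via constancy of fibre dimensions.} By the fibrewise results of \cite{WangCompactification,WangEOne}, three facts hold. The hypercohomology of \(F^\lambda K_{\mathfrak X_s}\) injects into \(H^q_{\mathrm{dR}}(\mathfrak U_s,\mathcal W_s)\) with image \(F^\lambda_{\mathrm{irr}}\). Only finitely many jumps occur, in a fixed set \(\lambda\in\frac1N\mathbb Z\). The \(\lambda\ll0\) lattice computes all of \(H_{\mathrm{dR}}\).

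\emph{This is the main obstacle.} Flat base change alone does not give local freeness; we need fibre dimensions constant in \(s\). I would first prove that \(\dim H^q_{\mathrm{dR}}(\mathfrak U_s,\mathcal W_s)\) is locally constant. Two routes are available:
\begin{itemize}
\item \emph{Topological route.} Use a relative compactification argument. The constant pole orders make \((\mathfrak U,\mathcal W)\) locally trivial over \(S\) in a stratified sense near infinity, so the twisted cohomology is the cohomology of a locally constant sheaf of pairs \((\mathfrak U_s,\mathrm{Re}\,\mathcal W_s\gg0)\).
\item \emph{Algebraic route.} Show that \(R\pi_*K^\bullet\) for \(\lambda\ll0\) carries an integrable connection (Step 3 below). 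A coherent \(\mathcal O_S\)-module with a connection on smooth \(S\) is locally free. Then apply cohomology-and-base-change.
\end{itemize}
I would try the algebraic route first.

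Once \(\mathcal H^q_S\) is locally free, the graded pieces are handled by \(E_1\)-degeneration. Each \(\Gr^\lambda\) complex is a complex of flat coherent sheaves. Upper semicontinuity of its fibre cohomology, combined with the equality
\[
\sum_\lambda\dim\Gr^\lambda=\dim H^q
\]
on every fibre (which is \(E_1\)-degeneration), forces every \(\dim\Gr^\lambda\) to be locally constant. Grauert's theorem then gives local freeness and base change for each \(R^q\pi_*\Gr^\lambda\). Descending induction on \(\lambda\) over the finite jump set yields that the \(\mathcal I^q_\lambda\) are subbundles with locally free quotients, and all transition maps commute with base change. This proves (ii) and (iii), modulo the connection.

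\textbf{Step 3: connection and transversality.} Define \(\nabla^q_S\) by the Katz--Oda construction. Filter the absolute complex \((\Omega^\bullet_{\mathfrak X}(\log\mathfrak D)(*\mathfrak P),d+d\mathcal W)\) by \(\pi^*\Omega^{\geq i}_S\wedge(\cdot)\). The \(d_1\) of the associated spectral sequence gives an integrable connection on \(R^q\pi_*\) of the relative complex.

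To get transversality, work with the absolute Yu lattice. The term \(d\mathcal W\wedge\) has poles of order \(\mathfrak P\) in the base directions as well. Hence lifting a section of \(F^\lambda\) and applying the absolute differential lands in
\[
\pi^*\Omega^1_S\otimes\Omega^{p}_{\mathfrak X/S}(\log\mathfrak D)\bigl(\lfloor(p+1-\lambda)\mathfrak P\rfloor\bigr),
\]
which is the \((\lambda-1)\)-level. This gives \(\nabla(\mathcal I_\lambda)\subseteq\Omega^1_S\otimes\mathcal I_{\lambda-1}\). Here constancy of the \(e_i\) is exactly what makes \(\Omega^1_{\mathfrak X}(\log\mathfrak D)\to\pi^*\Omega^1_S\) compatible with the pole twists.

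\textbf{Step 4: functoriality and conclusion.} Functoriality under smooth base change follows from naturality of the Katz--Oda filtration. Local constancy of all dimensions follows from local freeness over the connected base \(S\).
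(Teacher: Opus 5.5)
Your proposal follows essentially the same route as the paper: perfect direct images of the relative Yu lattices with derived base change; a Katz--Oda connection on the limiting object, made locally free by the theorem on coherent modules with connection; fibrewise $E_1$-injectivity plus upper semicontinuity in a telescoping sum to turn the graded and filtered pieces into bundles; and transversality from $\operatorname{gr}_L^1\mathcal A_S^{\lambda,\bullet}\simeq\pi^{-1}\Omega^1_{S/\mathbb C}\otimes\mathcal C_S^{\lambda-1,\bullet}[-1]$. The one step you should make explicit is that the Katz--Oda operator at level $\lambda$ lands in level $\lambda-1$, so before you can apply the local-freeness theorem to a \emph{coherent} sheaf you must upgrade the fibrewise fact that every level $\lambda\le0$ computes $H_{\mathrm{dR}}$ to isomorphisms $R\pi_{*,S}\mathcal C_S^{0,\bullet}\xrightarrow{\sim}R\pi_{*,S}\mathcal C_S^{\lambda,\bullet}$ of perfect complexes over $S$, which the paper does by derived Nakayama (fibrewise conservativity for perfect complexes).
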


\subsection{Proof strategy and scope}
\label{subsec:introduction-strategy-scope}

The exponentially twisted de Rham complex introduced in
\cref{subsec:introduction-relative-problem} has the Yu-filtered
logarithmic realization on \(\mathfrak X\)
\[
 \mathcal A_S^\bullet
 :=
 \left(
  \Omega_{\mathfrak X/\mathbb C}^\bullet
  (\log\mathfrak D)(*\mathfrak P),
  d+d\mathcal W\wedge
 \right)
\]
with its Yu filtration \(F\); write
\(\mathcal A_S^{\lambda,\bullet}:=F^\lambda
\mathcal A_S^\bullet\).  Exterior multiplication by absolute forms
makes this filtered complex a filtered absolute de Rham dg module.
We call it the \emph{absolute logarithmic twisted de Rham dg module}
to distinguish it from the relative logarithmic twisted de Rham dg
module
\[
 \mathcal C_S^\bullet
 :=
 \left(
  \Omega_{\mathfrak X/S}^\bullet
  (\log\mathfrak D)(*\mathfrak P),
  d_{\mathfrak X/S}+d_{\mathfrak X/S}\mathcal W\wedge
 \right).
\]
The action of forms pulled back from \(S\) defines the Katz--Oda
filtration \(L\) on \(\mathcal A_S^\bullet\), and
\(\mathcal C_S^\bullet=\operatorname{gr}_L^0
\mathcal A_S^\bullet\).  Its induced Yu filtration is given by
\(\mathcal C_S^{\lambda,\bullet}:=\operatorname{gr}_L^0
\mathcal A_S^{\lambda,\bullet}\).  More generally, the logarithmic
cotangent sequence gives canonical identifications
\begin{equation}
 \operatorname{gr}_L^r
 \mathcal A_S^{\lambda,\bullet}
 \simeq
 \pi^{-1}\Omega_{S/\mathbb C}^r
 \otimes
 \mathcal C_S^{\lambda-r,\bullet}[-r].
\label{eq:introduction-base-degree-graded-yu}
\end{equation}

The proof exploits the two structures carried simultaneously by
\(\mathcal A_S^\bullet\).  Its absolute de Rham dg-module structure,
together with the filtration \(L\), is the
\(D\)-module-inspired part of the argument; see
\cref{rem:abstract-filtered-d-module-origin}.  The formalism developed
in \cref{sec:de-rham-dg-modules-katz-oda} extracts the shifted
Katz--Oda operators from
\eqref{eq:introduction-base-degree-graded-yu} and constructs the
integrable connection on the limiting direct image in
\cref{prop:abstract-shifted-katz-oda}.  On the other hand, the
relative levels \(\mathcal C_S^{\lambda,\bullet}\) are bounded
\(S\)-linear complexes with coherent \(S\)-flat terms.
\Cref{sec:perfect-s-linear-complexes} supplies their perfect direct
images, derived base change, and the relevant perfect-complex fibre
criteria.  In particular,
\cref{lem:part-iii-constant-perfect-fibre-cohomology} converts locally
constant derived-fibre dimensions into locally free cohomology
sheaves with cohomology base change.

These two inputs meet in
\cref{thm:katz-oda-perfectness-exactness}, which is the central
mechanism of the paper.  The integrable connection on the limiting
direct image first makes the total cohomology into a vector bundle.
Fibrewise \(E_1\)-degeneration then controls how the filtration levels
sit inside this bundle.  Combined with the perfect-complex fibre
criteria, this promotes the fibrewise filtration to a filtration by
subbundles whose graded pieces are again vector bundles and whose
formation commutes with base change.  The Katz--Oda construction also
gives the resulting filtration its shifted Griffiths transversality.

In \cref{sec:families-fixed-pole-orders} we show that
\((\mathcal A_S^\bullet, F)\) satisfies the hypotheses of this
mechanism.  The companion results recalled in
\cref{sec:compactification-comparison-degeneration} supply the
required fibrewise exactness and identify the induced filtration on
each fibre with the canonical irregular Hodge filtration.
\Cref{thm:katz-oda-perfectness-exactness} then gives the bundles, base-change
properties, connection, and transversality asserted in
\cref{thm:introduction-fixed-pole-orders}.

This formulation retains the de Rham mechanism of filtered
\(D\)-module theory without invoking a category of irregular filtered
\(D\)-modules on Deligne--Mumford stacks or a stacky
filtered-direct-image strictness theorem.  For a smooth variety
admitting a projective good compactification, the strict filtered
direct-image theorem of Sabbah--Yu gives an alternative \(D\)-module
route and explains the close relation with Kontsevich bundles and
rescaling \cite{SabbahYu,SabbahIHT}; see
\cref{rem:part-iii-d-module-interpretation-filtered-gap}.

\subsection{Relation with previous deformation results}
\label{subsec:introduction-previous-deformation-results}

The one-parameter construction in
\cite[\S\S8--9]{HarderLee} is a direct predecessor of the relative
mechanism used here.  Harder--Lee consider a distinguished family of
Laurent potentials of the form
\[
 1+\sum_m u_m t^{\varphi(m)}x^m
\]
over a punctured disc.  On its smooth locus the boundary and polar
multiplicities remain fixed, and their logarithmic Gauss--Manin
construction and shifted transversality are closely related to the
Katz--Oda construction below.  They then extend the family over the
origin and prove the required filtered comparison for the resulting
quasi-stable toric degeneration by a tropical analysis of the central
fibre.

The present result differs in two directions.  It treats smooth
fixed-pole families over arbitrary algebraic bases, including
Deligne--Mumford families in which both the compactified space and the
potential may vary, and it constructs the filtered and graded bundles
themselves together with arbitrary base change, exactness, connection,
and transversality.  In the toric application it applies over the
entire nondegenerate coefficient locus, rather than only along a
distinguished one-parameter family.  Conversely, it does not treat
the degenerate parameter value or replace the central-fibre argument
of Harder--Lee.

The fibrewise injectivity used in our general Deligne--Mumford setting
is supplied by the stack \(E_1\)-degeneration theorem
\cite{WangEOne}.  This should be distinguished from the special
central-fibre injectivity proved by tropical methods in
\cite[\S9]{HarderLee}.

Numerical deformation invariance also has other important
predecessors.  In particular, Qin--Zhang prove invariance of irregular
Hodge numbers for nondegenerate functions on a fixed smooth projective
SNC pair \cite{QinZhang}.  The result here has a different output: it
constructs the relative filtered and graded bundles themselves,
together with their exact sequences, base-change properties,
connection, and transversality, and it includes the stacky and
orbifold settings.  These relative structures, rather than numerical
constancy alone, are the content of
\cref{thm:introduction-fixed-pole-orders}.

\subsection{Toric and orbifold applications}
\label{subsec:introduction-toric-orbifold-applications}

The condition that the pole orders remain fixed occurs naturally for
coefficient families of Laurent potentials.  Let \(\mathscr U_\Gamma\)
be a smooth
quasiprojective toric Deligne--Mumford stack; torus factors and a
finite generic stabilizer are allowed.  Fix a lattice polytope
\(P\) in the character space, containing the origin and compatible
with the open toric fan, and let
\[
 V_P:=\bigoplus_{m\in P\cap M}\mathbb Cx^m
\]
be the corresponding coefficient space.  For
\(h=\sum_ma_mx^m\), write
\[
 \Delta_\infty(h)
 :=\operatorname{Conv}
 \bigl(\{0\}\cup\{m\mid a_m\ne0\}\bigr)
\]
for its Newton polytope at infinity.  Let
\(\mathcal U_P\subseteq V_P\) be the locus on which
\(\Delta_\infty(h)=P\) and \(h\) is Newton nondegenerate at infinity.
\begin{theorem}[Toric and orbifold coefficient invariance]
\label{thm:introduction-coefficient-invariance}
The locus \(\mathcal U_P\) is a nonempty Zariski-open irreducible
subset of \(V_P\).  The universal Laurent potential and its
restrictions to the inertia sectors, before and after effective
rigidification, admit coefficient-independent smooth toric
compactifications that form families with fixed pole orders over
\(\mathcal U_P\).
Consequently, the conclusions of
\cref{thm:introduction-fixed-pole-orders}, together with their
sectorwise age-shifted orbifold analogues, hold over
\(\mathcal U_P\).  In particular, the canonical irregular Hodge
filtrations and rational graded pieces form vector bundles compatible
with arbitrary base change, and all ordinary and orbifold irregular
Hodge numbers are independent of \(h\in\mathcal U_P\).
\end{theorem}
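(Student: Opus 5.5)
The plan is to reduce everything to \cref{thm:introduction-fixed-pole-orders} by building one coefficient-independent toric compactification and checking that the universal potential over \(\mathcal U_P\) is a family with fixed pole orders.

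\textbf{Openness, irreducibility, nonemptiness.} The condition \(\Delta_\infty(h)=P\) says that \(a_m\neq0\) for every vertex \(m\neq0\) of \(P\). This is the complement of finitely many coordinate hyperplanes in \(V_P\), so it is open and irreducible. Newton nondegeneracy at infinity says that for each face \(\sigma\) of \(P\) not containing \(0\), the truncation \(h_\sigma\) has no critical point on the torus. The bad locus is the image of a closed incidence subvariety of \(V_P\times T\) under projection. I will show this image is closed by homogeneity: it is cut out by a discriminant-type (resultant) condition on the face coefficients. Then \(\mathcal U_P\) is a nonempty open subset of the irreducible space \(V_P\), hence irreducible. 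Nonemptiness follows from Kouchnirenko/Bertini: generic coefficients on each face are nondegenerate.

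\textbf{Compactification.} Next I choose a smooth projective stacky fan refining both the fan of \(\mathscr U_\Gamma\) (using the support condition on \(P\)) and the normal fan of \(P\) restricted to the complement of the open cones. Adding toric stacky blow-ups gives a smooth toric DM stack \(\mathscr X\supset\mathscr U_\Gamma\) with SNC toric boundary \(\mathfrak D=\sum_\rho\mathfrak D_\rho\). Set \(e_\rho=-\min_{m\in P}\langle m,v_\rho\rangle\geq0\). Take \(\mathfrak X=\mathscr X\times\mathcal U_P\to\mathcal U_P\), with \(\mathfrak D\) the pulled-back boundary and \(\mathcal W\) the universal Laurent polynomial.

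Because every vertex coefficient is nonzero, the pole order of \(h\) along \(\mathfrak D_\rho\) is exactly \(e_\rho\) on each fibre. Hence \(\mathfrak P=\sum e_\rho\mathfrak D_\rho\) is constant, and \(\mathcal W\) is regular on \(\mathfrak U\).

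The NC rational compactification condition on each fibre is checked locally at each torus-fixed stratum. In a toric chart the potential has the form \(u/\prod x_\rho^{e_\rho}\), and I need the zero locus of \(u\) to be smooth and transverse to the boundary strata where \(e_\rho>0\), in the sense required by \cref{ass:fixed-pole-orders-dm-family}. Nondegeneracy at infinity of the face truncations is precisely this transversality. This is where refining by the normal fan is essential, and I expect this step to be the main obstacle. Points of indeterminacy where \(u\) meets the zero divisor of a non-polar boundary must be removed by further toric blow-ups. These blow-ups are still coefficient-independent, because only the polytope enters.

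\textbf{Sectors and conclusion.} The inertia sectors of a toric DM stack are again toric DM stacks (closed toric substacks attached to box elements), and so is the effective rigidification. The restriction of \(\mathcal W\) to a sector is a Laurent polynomial whose Newton polytope at infinity is the projection/intersection \(P\cap\) (sector subspace). It is nondegenerate at infinity whenever \(h\) is, since faces restrict to faces. The same compactification, restricted to the closed substack of \(\mathscr X\), gives families with fixed pole orders over the same \(\mathcal U_P\).

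Applying \cref{thm:introduction-fixed-pole-orders} to each of these families yields the bundles, base change, connection, and transversality. The age shifts are constant on each sector, so the orbifold filtration is a direct sum of shifted filtered bundles. Local constancy on the connected space \(\mathcal U_P\) then gives coefficient independence of all ordinary and orbifold irregular Hodge numbers.
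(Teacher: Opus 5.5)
Your outline matches the paper's. $\mathcal U_P$ is the vertex-open set minus the face-discriminant loci. A toric compactification on which $\psi_P$ is linear on every cone has a constant polar divisor; the pole order is $-c_\rho\nu_\rho(P)$, so your $v_\rho$ must be the stacky vector, not the primitive generator. The NC rational condition is face nondegeneracy read on orbit charts. Sectors are toric with Newton polytope $P_\gamma=P\cap\sigma_\gamma^\perp$. This is an intersection, not a projection, and it is a face of $P$ by the support condition, which is what makes ``faces restrict to faces'' true. The main theorem plus age-shifted sums then finish the argument.

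The genuinely different step is the sector compactification. The paper does not restrict its compactification. It applies the toric family theorem afresh to $(\Gamma_\gamma,P_\gamma)$ and pulls the product family back along $\operatorname{res}_\gamma\colon\mathcal U_P\to\mathcal U_{P_\gamma}$. That reduces everything to $\operatorname{res}_\gamma(\mathcal U_P)\subseteq\mathcal U_{P_\gamma}$, and it sidesteps the zero--pole cancellation the paper warns can make a restricted polar divisor smaller than $e_\gamma^*\mathfrak P$. Restricting the single compactification also works, and it yields one compactification compatible with the evaluation maps. However, you must add the check you omitted. For a cone $\tau\supseteq\sigma_\gamma$ of the adapted fan, $F_\tau\subseteq F_{\sigma_\gamma}=P_\gamma$, because $\psi_P\equiv0$ on $\sigma_\gamma$. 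Hence $\min_P\langle v_\rho,\cdot\rangle=\min_{P_\gamma}\langle v_\rho,\cdot\rangle$ for every ray $\rho$ adjacent to $\sigma_\gamma$. So no cancellation occurs, the quotient fan is adapted to $P_\gamma$, fullness is inherited, and the sector's polar multiplicities are fixed. For the rigidified sectors, rigidify the closed sector as well.

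Three points in the rest need correction.

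First, drop the ``further toric blow-ups''. An NC rational compactification may have indeterminacy along $\mathscr Z\cap|\mathscr P|$. What must be checked is only that $\mathscr Z$ is smooth and $\mathscr Z+\mathscr D$ is reduced NC near $|\mathscr P|$, for all branches of $\mathscr D$, polar or not. At a point of an orbit $O_\sigma\subseteq|\mathscr P|$ one has $0\notin F_\sigma$, and $x^{-m_\sigma}h$ restricts on $O_\sigma$, up to a unit, to a pullback of $\overline h_{F_\sigma}$ along a smooth torus map. So $\mathscr Z$ is transverse to the whole stratum $\overline{O_\sigma}$, hence to every branch through the point. A non-polar branch on which $h$ vanishes identically (possible if $a_0=0$ and $F_\rho=\{0\}$) never meets $|\mathscr P|$. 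Indeed, a cone $\tau$ containing $\rho$ and a polar ray $\rho'$ would give $\varnothing\neq F_\tau\subseteq F_\rho\cap F_{\rho'}$, forcing $0\in F_{\rho'}$. In any case, toric blow-ups cannot move $\mathscr Z$, which varies with $h$.

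Second, the fan construction needs more care. The fan must contain $\Gamma$ unsubdivided, which is where the support condition enters. $\Gamma$ must be full, so that the complement of $\mathscr U_\Gamma$ is a divisor. The new stacky vectors must preserve the generic gerbe. Projectivity also needs proof; the paper uses Sumihiro's embedding, the wall criterion for a strictly convex support function plus $\psi_P$, and barycentric star subdivisions.

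Third, closedness of the bad locus is not a homogeneity statement. Projection from $V_P\times T$ is not proper. Singular points escaping to the toric boundary become singular points of subface truncations, and those subfaces also avoid $0$. This is the principal $A$-determinant fact the paper cites.
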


The family selected by Harder--Lee from a tropical weight determines
a distinguished analytic curve in \(\mathcal U_P\).  Their
degeneration argument computes the irregular Hodge numbers along this
punctured family and relates them to the tropical central fibre.  The
preceding theorem promotes the resulting value from that family to
every point of the irreducible algebraic coefficient space
\(\mathcal U_P\), and at the same time supplies the filtered and
graded bundle structures over that space.

Combined with Harder--Lee's combinatorial description, this has the
following consequence.  Under the freeness, full-dimensionality, and
convexity hypotheses of
\cref{cor:toric-harder-lee-all-nondegenerate-coefficients}, a coherent
star triangulation of \(P\) produces a Clarke dual pair
\((\boldsymbol\Sigma,\check{\boldsymbol\Sigma})\) with
\(T(\boldsymbol\Sigma)\simeq\mathscr U_\Gamma\).  For every
\(h\in\mathcal U_P\), its ordinary and orbifold irregular Hodge
numbers are respectively the dimensions of the identity-sector and
full combinatorial Hodge spaces defined by Harder--Lee's cellular
sheaves \(\Xi\) on
\((\boldsymbol\Sigma\oplus\check{\boldsymbol\Sigma})_0\).
Thus their formula holds on the entire nondegenerate coefficient
locus \cite{HarderLee}.
 \section{Compactification Comparison and
\texorpdfstring{\(E_1\)}{E1}-Degeneration}
\label{sec:compactification-comparison-degeneration}

We use two preceding papers in this series:
\emph{Compactification Independence of the Irregular Hodge
Filtration on Deligne--Mumford Stacks}
\cite{WangCompactification} and
\emph{\(E_1\)-Degeneration for Irregular Hodge Filtrations on
Deligne--Mumford Stacks} \cite{WangEOne}.  We refer to them as the
first and second companion papers.

Throughout, stacks are separated and of finite type over \(\mathbb C\),
and all Yu and irregular-Hodge indices are rational.  An \emph{NC
divisor} on a smooth Deligne--Mumford stack is one that becomes simple
normal crossing on an \'etale scheme atlas.  A \emph{labelled SNC
divisor} is an NC divisor with a finite global presentation by smooth
effective Cartier divisors whose multiple intersections are smooth of
the expected codimension; the labels rule out permutation of local
branches.  Rounding of an integral divisor is coefficientwise:
\[
 \left\lfloor t\sum_i e_i\mathscr E_i\right\rfloor
 :=\sum_i\lfloor te_i\rfloor\mathscr E_i
 \qquad(t\in\mathbb Q).
\]

\subsection{Compactifications and Yu lattices}
\label{subsec:compactifications-yu-lattices}

\begin{definition}[Landau--Ginzburg model and good compactification]
\label{def:good-stack-compactification}
A smooth Deligne--Mumford Landau--Ginzburg model is a pair
\((\mathscr U,w)\), where \(\mathscr U\) is smooth and
\(w\colon\mathscr U\to\A^1\) is regular.  A \emph{good stack
compactification} consists of a smooth proper Deligne--Mumford stack
\(\mathscr X\), a dense open immersion
\(\mathscr U\subset\mathscr X\), a labelled SNC boundary
\[
 \mathscr D=(\mathscr X\setminus\mathscr U)_{\mathrm{red}},
\]
and a morphism \(\overline w\colon\mathscr X\to\Pj^1\) extending
\(w\).  It is \emph{projective good} if the coarse moduli space of
\(\mathscr X\) is projective.
\end{definition}

\begin{definition}[NC rational stack compactification]
\label{def:good-rational-stack-compactification}
An \emph{NC rational stack compactification} of \((\mathscr U,w)\)
consists of a smooth proper Deligne--Mumford stack \(\mathscr X\), a
dense open immersion \(\mathscr U\subset\mathscr X\), the NC boundary
\(\mathscr D=(\mathscr X\setminus\mathscr U)_{\mathrm{red}}\), and a
rational extension
\(\overline w\colon\mathscr X\dashrightarrow\Pj^1\).  On every
component on which \(w\) is not identically zero, write, with no
common prime component,
\[
 \divisor(w)=\mathscr Z-\mathscr P,
 \qquad |\mathscr P|\subseteq|\mathscr D|;
\]
on a component on which \(w=0\), put \(\mathscr Z=\mathscr P=0\).
There must be a neighbourhood \(\mathscr V\) of \(|\mathscr P|\) on
which \(\mathscr Z\) is empty or smooth and
\(\mathscr Z+\mathscr D\) is reduced NC.  Equivalently, on a suitable
\'etale chart near the polar locus, \(w\) has one of the forms
\[
 \frac{1}{x_1^{e_1}\cdots x_r^{e_r}}
 \qquad\text{or}\qquad
 \frac{z}{x_1^{e_1}\cdots x_r^{e_r}}.
\]
When the boundary is labelled SNC we say so explicitly.  The
compactification is \emph{projective} if its coarse moduli space is
projective.
\end{definition}

This is \cite[Definition~3.6]{WangCompactification}.
For a good compactification put
\(\mathscr P=\overline w^*(\infty)\); for an NC rational
compactification use the polar divisor in the preceding definition.
In either case set
\begin{align}
 K_{\mathscr X,w}^\bullet
 &:={}
 \left(
  \Omega_{\mathscr X}^\bullet(\log\mathscr D)(*\mathscr P),
  d+dw\wedge
 \right),
\label{eq:companion-rational-ambient-complex}\\
 F^\lambda K_{\mathscr X,w}^a
 &:={}
 \begin{cases}
  0,&a<\lceil\lambda\rceil,\\[3pt]
  \Omega_{\mathscr X}^a(\log\mathscr D)
  \bigl(\lfloor(a-\lambda)\mathscr P\rfloor\bigr),
  &a\geq\lceil\lambda\rceil.
 \end{cases}
\label{eq:companion-rational-yu-lattice}
\end{align}
We call \(F^\lambda K_{\mathscr X,w}^\bullet\) the \emph{Yu lattice
at level \(\lambda\)}.  It is the stack version of Yu's lattice
\cite[\S1(b), equations~(5)--(7)]{Yu}.  The local forms above give
\(dw\in\Omega_{\mathscr X}^1(\log\mathscr D)(\mathscr P)\), so every
\(F^\lambda K_{\mathscr X,w}^\bullet\) is a subcomplex.
Set
\begin{equation}
 F^{>\lambda}K_{\mathscr X,w}^\bullet
 :=\bigcup_{\mu>\lambda}F^\mu K_{\mathscr X,w}^\bullet.
\label{eq:companion-yu-lattice-above-lambda}
\end{equation}
We call \(\lambda\) a \emph{lattice jump} if
\(F^{>\lambda}K_{\mathscr X,w}^\bullet
 \ne F^\lambda K_{\mathscr X,w}^\bullet\).  Formula
\eqref{eq:companion-rational-yu-lattice} shows that the set of lattice
jumps is locally finite in \(\mathbb R\).

\subsection{Comparison and degeneration theorems}
\label{subsec:comparison-degeneration-theorems}

\begin{theorem}[Compactification comparison]
\label{thm:companion-compactification-comparison}
Let \((\mathscr U,w)\) be a smooth separated complex
Deligne--Mumford Landau--Ginzburg model.
\begin{enumerate}[label=\textup{(\roman*)}]
\item Every NC rational stack compactification has a restriction
  quasi-isomorphism
  \begin{equation}
   R\Gamma(\mathscr X,K_{\mathscr X,w}^\bullet)
   \xrightarrow{\ \sim\ }
   R\Gamma\!\left(
    \mathscr U,(\Omega_{\mathscr U}^\bullet,d+dw\wedge)
   \right).
  \label{eq:nc-rational-total-logarithmic-comparison}
  \end{equation}
\item For every \(\lambda\in\mathbb Q\), comparison of any two good
  compactifications identifies the images of the hypercohomology of
  their Yu lattices inside
  \(H_{\mathrm{dR}}^k(\mathscr U,w)\).  Comparison of an NC rational
  compactification with a good compactification resolving its rational
  map identifies the image of its Yu lattice with the same subspace.
\item For fixed \(\alpha\in\mathbb Q\cap[0,1)\), these comparisons are
  compatible with all transition maps among the levels
  \(p-\alpha\), \(p\in\mathbb Z\).
\item The common image is the whole twisted de Rham group when
  \(\lambda\leq0\).
\end{enumerate}
\end{theorem}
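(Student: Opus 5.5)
This theorem is recalled from the first companion paper \cite{WangCompactification}, so the proof will mostly assemble its steps and check that the present conventions agree with those there.

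\emph{Part (i).} The statement is local on \(\mathscr X\) for the étale topology, and the restriction map is compatible with étale base change. It therefore suffices to show that
\[
K_{\mathscr X,w}^\bullet\to Rj_*(\Omega_{\mathscr U}^\bullet,d+dw\wedge)
\]
is a quasi-isomorphism, where \(j\colon\mathscr U\to\mathscr X\) is the inclusion. Since \(j\) is affine, \(Rj_*=j_*\), and \(j_*\Omega_{\mathscr U}^\bullet\) is the complex of forms with arbitrary poles along all of \(\mathscr D\). I will filter by the pole order along the components of \(\mathscr D\) not in \(|\mathscr P|\). Along those components, \(w\) is regular in the local charts \(1/x^e\) or \(z/x^e\). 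The quotient pieces are then handled by the classical Deligne argument: logarithmic poles compute \(Rj_*\) along the non-polar boundary, twisted by the exact operator \(dw\wedge\). That argument runs in coordinates \(x_{r+1},\dots,x_n\) transverse to the polar branches, where \(dw\) has no component. Near \(|\mathscr P|\) itself nothing needs to be done, because \((*\mathscr P)\) already allows arbitrary poles. Taking \(R\Gamma\) then gives \eqref{eq:nc-rational-total-logarithmic-comparison}.

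\emph{Parts (ii) and (iii).} For two good compactifications I will dominate both by a third. By weak factorization, adapted to tame DM stacks (for example via coarse spaces and functorial resolution), I can reduce to a single blow-up along a smooth centre transverse to \(\mathscr D\). For such a blow-up \(f\colon\mathscr X'\to\mathscr X\), I will show that the natural map
\[
F^\lambda K_{\mathscr X,w}^\bullet\to Rf_*F^\lambda K_{\mathscr X',w}^\bullet
\]
is a quasi-isomorphism, or at least an isomorphism on the image in hypercohomology. This is the step I expect to be the main obstacle. The polar multiplicities \(\mathscr P'=f^*\mathscr P\) acquire a new exceptional coefficient, and the rounding \(\lfloor(a-\lambda)\mathscr P'\rfloor\) is not a pullback. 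The argument must show that the higher direct images of the logarithmic forms twisted by these rounded divisors vanish, which I will reduce to a local toric computation on \(\mathbb A^n\) blown up along a coordinate subspace. The fallback is to compare only images in \(H^k_{\mathrm{dR}}\), using Yu's characterization of the filtration through \(E_1\)-degeneration, which is available from \cite{WangEOne}.

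For an NC rational compactification, resolving \(\overline w\) by blowing up inside the locus where \(\mathscr Z+\mathscr D\) is NC is again locally toric, and the same computation applies. The maps constructed are natural inclusions of lattices. They are therefore compatible with the inclusions \(F^{p+1-\alpha}\subset F^{p-\alpha}\), which gives (iii).

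\emph{Part (iv).} If \(\lambda\le0\), then \(F^\lambda K^a\) is defined for all \(a\ge0\) and contains the pole order \(\lfloor a\mathscr P\rfloor\) together with \(\lfloor-\lambda\mathscr P\rfloor\ge0\). The argument is the classical fact that \(F^0\) computes the full cohomology: the filtration by pole order has acyclic graded pieces. Locally this follows from the Koszul exactness of \(dw\wedge\) on the graded pieces modulo the lattice, as in Yu's original proof of \cite[\S1]{Yu}. So \(F^0\hookrightarrow K\) is a quasi-isomorphism, and for \(\lambda\le 0\) the inclusions \(F^0\subseteq F^\lambda\subseteq K\) give surjectivity onto \(H^k_{\mathrm{dR}}\).
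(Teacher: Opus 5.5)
The paper gives no proof of this theorem; it quotes Propositions~3.1, 3.5 and 3.10, Corollary~3.11 and Theorem~5.6 of \cite{WangCompactification}. So your sketch has to stand on its own.

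Parts (i) and (iv) essentially work. In (i), $dw$ generally does have components along the non-polar coordinates. What makes Deligne's argument go through is that the coefficient of $dx_j/x_j$ is $x_j\partial_{x_j}w$, which vanishes on $x_j=0$ and so drops out of the graded pieces. In (iv), surjectivity from $F^0\subseteq F^\lambda\subseteq K$ is all that is needed. The Koszul argument also works on NC rational compactifications, because $x^e\,dw=dz-z\sum_i e_i\,dx_i/x_i$ is a nowhere-vanishing section of $\Omega^1(\log\mathscr D)$ along $|\mathscr P|$.

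The genuine gap is the second sentence of (ii). This is the case the paper actually relies on, since its toric compactifications are NC rational, not good. Resolving $\overline w$ blows up centres over $\mathscr Z\cap|\mathscr P|$, and on the new exceptional divisors the pole order of $w$ drops. Hence $\mathscr P'\neq f^*\mathscr P$, and the new boundary contains components along which the old lattice allowed no logarithmic pole.

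A concrete example: take $w=z/x$ on $\mathbb A^2$ with $\mathscr D=\{x=0\}$ and blow up the origin. In the chart $z=xz'$ one gets $w=z'$, so the exceptional divisor $E$ is boundary of pole order $0$ and $\mathscr P'=\widetilde{\mathscr D}$. The section $dx/x^2$ of $F^0K^1_{\mathscr X,w}$ pulls back to $x^{-1}\,dx/x\notin\Omega^1(\log E)$. So the map $F^\lambda K^\bullet_{\mathscr X,w}\to Rf_*F^\lambda K^\bullet_{\mathscr X',w}$ that you propose to prove an isomorphism does not even exist. No termwise computation of direct images, toric or otherwise, can produce the comparison.

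What does exist is the reverse inclusion $f_*F^\lambda K^\bullet_{\mathscr X',w}\subseteq F^\lambda K^\bullet_{\mathscr X,w}$, since the lattices agree off the codimension-two centre. In the example its cokernel is $\mathbb C\,[dx/x^2]\to\mathbb C\,[dx\wedge dz/x^3]$ at the origin. This is nonzero in degrees $1$ and $2$, and acyclic only because $(d+dw\wedge)(dx/x^2)=-dx\wedge dz/x^3$.

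So the NC rational comparison needs a complex-level argument that uses the twisted differential: exactness of $dw\wedge$ on such cokernels near $\mathscr Z\cap|\mathscr P|$, where $x^e\,dw\equiv dz$. It also needs vanishing of the higher direct images. Your proposal contains no such step. Part (iii) would then follow from naturality of this corrected comparison, not of the pullback you describe.

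Your fallback does not repair this. $E_1$-degeneration gives injectivity on each compactification separately but says nothing about equality of images across compactifications. Moreover, \cite{WangEOne} comes later in the series, so invoking it here risks circularity.

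For good compactifications, the blow-up lemma you flag is the entire content of that case, and it is only announced, not proved. The rounding defect $\lfloor tf^*\mathscr P\rfloor-f^*\lfloor t\mathscr P\rfloor=cE$, with $0\le c\le k-1$, is the right starting point, where $k$ is the number of branches containing the centre.

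Finally, weak factorization cannot be transported ``via coarse spaces'', because a factorization of coarse spaces does not lift to the stack. Either use a functorial factorization theorem, which descends along \'etale charts, or avoid factorization altogether. Hironaka's resolution of indeterminacy plus the two-out-of-six property of the induced maps on hypercohomology suffices.
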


Here \textup{(i)} is
\cite[Proposition~3.1]{WangCompactification}, and
\textup{(ii)--(iv)} collect
\cite[Proposition~3.5, Proposition~3.10, Corollary~3.11, and
Theorem~5.6]{WangCompactification} in the form used below.

\begin{definition}[Canonical irregular Hodge filtration]
\label{def:canonical-irregular-hodge-filtration}
Denote the common image in
\cref{thm:companion-compactification-comparison} by
\[
 F_{\mathrm{irr}}^\lambda
 H_{\mathrm{dR}}^k(\mathscr U,w).
\]
Put
\[
 F_{\mathrm{irr}}^{>\lambda}
 :=\bigcup_{\mu>\lambda}F_{\mathrm{irr}}^\mu,
 \qquad
 \Gr_{F_{\mathrm{irr}}}^{\lambda}
 :=F_{\mathrm{irr}}^\lambda/F_{\mathrm{irr}}^{>\lambda}.
\]
We call \(\Gr_{F_{\mathrm{irr}}}^{\lambda}\) the \emph{rational graded
piece} at \(\lambda\).
Thus
\begin{equation}
 F_{\mathrm{irr}}^\lambda
 H_{\mathrm{dR}}^k(\mathscr U,w)
 =H_{\mathrm{dR}}^k(\mathscr U,w)
 \qquad(\lambda\leq0).
\label{eq:canonical-nonpositive-level-comparison}
\end{equation}
\end{definition}

Here a \emph{Yu lattice} is a filtered subcomplex on a chosen
compactification; the corresponding irregular Hodge level is its
compactification-independent hypercohomological image in twisted de
Rham cohomology.  The comparison theorem identifies this image, and
\(E_1\)-degeneration makes the defining map injective.

\begin{theorem}[Fixed-fractional-part \(E_1\)-degeneration]
\label{thm:companion-fixed-alpha-e1-degeneration}
Assume that the coarse moduli space of \(\mathscr U\) is
quasiprojective, and let
\((\mathscr X,\mathscr D,\overline w)\) be an NC rational stack
compactification.  For every
\(\alpha\in\mathbb Q\cap[0,1)\), the fixed-\(\alpha\) Kontsevich
spectral sequence degenerates at \(E_1\).  Equivalently, the spectral
sequence of the integer-indexed Yu filtration
\[
 F_\alpha^p:=F^{p-\alpha}
 \qquad(p\in\mathbb Z)
\]
degenerates at \(E_1\).
\end{theorem}

This is the NC rational degeneration theorem
\cite[Theorem~1.2]{WangEOne}.

\begin{remark}[Different fractional parameters]
\label{rem:fractional-parameters-levelwise-comparison}
The fixed-\(\alpha\) qualification in
\cref{thm:companion-compactification-comparison} is essential:
comparison morphisms belonging to different fractional parameters
are not asserted to commute with transition maps
\cite[Remark~3.12]{WangCompactification}.  Accordingly, two rational
levels with different fractional parts are compared only after their
hypercohomology groups have been embedded separately in the common
group \(H_{\mathrm{dR}}^k(\mathscr U,w)\).  The rational graded piece
at \(\lambda\) is therefore formed from the separately identified
images at \(\lambda\) and immediately above \(\lambda\), rather than
from a single fixed-\(\alpha\) spectral sequence.
\end{remark}

\subsection{Level and graded comparison on NC rational compactifications}
\label{subsec:cohomological-consequences-companion-inputs}

\begin{proposition}[Level and graded comparison on an NC rational
compactification]
\label{prop:nc-rational-level-graded-comparison}
Under the hypotheses of
\cref{thm:companion-fixed-alpha-e1-degeneration}, the following hold.
\begin{enumerate}[label=\textup{(\roman*)}]
\item For every \(k\) and \(\lambda\in\mathbb Q\), restriction
  induces an injection
  \begin{equation}
   \bH^k\!\left(
    \mathscr X,F^\lambda K_{\mathscr X,w}^\bullet
   \right)
   \lhook\joinrel\longrightarrow
   H_{\mathrm{dR}}^k(\mathscr U,w),
  \label{eq:nc-rational-level-injection}
  \end{equation}
  whose image is
  \(F_{\mathrm{irr}}^\lambda
    H_{\mathrm{dR}}^k(\mathscr U,w)\).
\item The inclusions at the levels \(\lambda\) and \(>\lambda\)
  induce a canonical isomorphism
  \begin{equation}
   \bH^k\!\left(
    \mathscr X,
    F^\lambda K_{\mathscr X,w}^\bullet/
    F^{>\lambda}K_{\mathscr X,w}^\bullet
   \right)
   \xrightarrow{\ \sim\ }
   \Gr_{F_{\mathrm{irr}}}^{\lambda}
   H_{\mathrm{dR}}^k(\mathscr U,w).
  \label{eq:rational-graded-comparison}
  \end{equation}
\item For every \(\lambda\leq0\), restriction is a quasi-isomorphism
  \begin{equation}
   R\Gamma\!\left(
    \mathscr X,F^\lambda K_{\mathscr X,w}^\bullet
   \right)
   \xrightarrow{\ \sim\ }
   R\Gamma\!\left(
    \mathscr U,(\Omega_{\mathscr U}^\bullet,d+dw\wedge)
   \right).
  \label{eq:nc-rational-nonpositive-level-comparison}
  \end{equation}
\end{enumerate}
\end{proposition}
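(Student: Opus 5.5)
We derive all three parts from \cref{thm:companion-compactification-comparison} and \cref{thm:companion-fixed-alpha-e1-degeneration}. The key point is that the canonical irregular Hodge level is, by definition, the image of the Yu-lattice hypercohomology, so only injectivity and the behaviour of the graded piece need proof.

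For \textup{(i)}, fix \(\lambda\) and write \(\lambda=p-\alpha\) with \(p\in\mathbb Z\) and \(\alpha\in\mathbb Q\cap[0,1)\). The integer-indexed filtration \(F_\alpha^\bullet\) on \(K_{\mathscr X,w}^\bullet\) is exhaustive: for \(p\ll0\) we have \(F_\alpha^p=F^{p-\alpha}\) with \(p-\alpha\le0\), and by \cref{thm:companion-compactification-comparison}\textup{(i)},\textup{(iv)} its hypercohomology already maps onto \(H_{\mathrm{dR}}^k(\mathscr U,w)\). It is also bounded, since \(F_\alpha^p=0\) in degrees \(a<p\), so the filtration is finite in each degree. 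Under these conditions, \(E_1\)-degeneration of this spectral sequence is equivalent to the injectivity of \(\bH^k(\mathscr X,F_\alpha^p)\to\bH^k(\mathscr X,K_{\mathscr X,w}^\bullet)\) for all \(p\) (strictness). Composing with the quasi-isomorphism \eqref{eq:nc-rational-total-logarithmic-comparison} gives injectivity of \eqref{eq:nc-rational-level-injection}. \cref{thm:companion-compactification-comparison}\textup{(ii)} identifies its image with \(F_{\mathrm{irr}}^\lambda\).

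One point needs care here. The union \(K_{\mathscr X,w}^\bullet=\bigcup_p F_\alpha^p\) is an infinite increasing union of coherent complexes. Hypercohomology commutes with this filtered colimit on a quasi-compact separated stack, so the standard strictness argument applies to the finite pieces and then passes to the limit.

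For \textup{(iii)}, take \(\lambda\le0\). The map \eqref{eq:nc-rational-level-injection} is injective by \textup{(i)}. By \eqref{eq:canonical-nonpositive-level-comparison} its image is everything, so it is surjective. Hence it is an isomorphism in every degree \(k\), which gives the quasi-isomorphism \eqref{eq:nc-rational-nonpositive-level-comparison}.

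For \textup{(ii)}, the lattice jumps are locally finite, so there is \(\mu>\lambda\) with \(F^{>\lambda}K=F^\mu K\). Because the jumps are locally finite, \(F_{\mathrm{irr}}^{>\lambda}=F_{\mathrm{irr}}^{\mu}\) for \(\mu\) close to \(\lambda\). Consider the short exact sequence
\[
 0\to F^\mu K\to F^\lambda K\to F^\lambda K/F^\mu K\to0
\]
and its long exact sequence in hypercohomology. Both \(\bH^j(F^\mu K)\) and \(\bH^j(F^\lambda K)\) inject into \(H_{\mathrm{dR}}^j(\mathscr U,w)\) by \textup{(i)}, and these injections are compatible with the restriction maps. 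Therefore \(\bH^j(F^\mu K)\to\bH^j(F^\lambda K)\) is injective for every \(j\). The connecting maps then vanish, and the sequence splits into short exact sequences
\[
 0\to\bH^k(F^\mu K)\to\bH^k(F^\lambda K)\to\bH^k(F^\lambda K/F^\mu K)\to0 .
\]
Identifying the first two terms with \(F_{\mathrm{irr}}^{\mu}=F_{\mathrm{irr}}^{>\lambda}\) and \(F_{\mathrm{irr}}^\lambda\) gives the isomorphism \eqref{eq:rational-graded-comparison}.

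The expected obstacle lies in the step where \(\lambda\) and \(\mu\) may have different fractional parts. By \cref{rem:fractional-parameters-levelwise-comparison}, the comparison morphisms for different \(\alpha\) are not known to commute with transition maps. We avoid needing any such commutation: we use only that both restriction maps out of the same NC rational compactification \(\mathscr X\) land in the common group \(H_{\mathrm{dR}}^k(\mathscr U,w)\) through the single quasi-isomorphism \eqref{eq:nc-rational-total-logarithmic-comparison}. The triangle formed by the inclusion \(F^\mu K\subset F^\lambda K\) and the two restrictions then commutes tautologically, at the level of complexes on the same \(\mathscr X\). So the argument needs no cross-\(\alpha\) comparison, only that each image is the canonical one, which is \cref{thm:companion-compactification-comparison}\textup{(ii)}.
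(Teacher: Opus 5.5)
Your proposal is correct and follows the paper's proof. Part (i) composes injectivity from fixed-\(\alpha\) \(E_1\)-degeneration with the total comparison quasi-isomorphism and takes the image from the comparison theorem; part (ii) runs the long exact sequence of \(F^{\mu}K\subset F^{\lambda}K\), using the two separately identified images in the same group \(H_{\mathrm{dR}}^k(\mathscr U,w)\); part (iii) is injectivity plus surjectivity for \(\lambda\le0\). The only divergence is in (i): your claim that the integer-indexed Yu filtration is finite in each degree is false in general, since the twist \(\lfloor (a-p+\alpha)\mathscr P\rfloor\) grows without bound as \(p\to-\infty\). Your truncate-and-take-colimit paragraph does repair this, whereas the paper obtains finiteness by identifying the fixed-\(\alpha\) Yu filtration with the stupid filtration of the bounded Kontsevich complex.
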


\begin{proof}
Write \(\lambda=p-\alpha\), with \(p\in\mathbb Z\) and
\(\alpha\in\mathbb Q\cap[0,1)\).  The fixed-\(\alpha\) Yu filtration
is identified with the stupid filtration of the bounded Kontsevich
complex.  Hence the \(E_1\)-degeneration in
\cref{thm:companion-fixed-alpha-e1-degeneration} implies that the
cohomology of each filtered subcomplex injects into the cohomology of
the total complex.  Combining this with the total
comparison in
\cref{thm:companion-compactification-comparison}(i) gives the map in
\eqref{eq:nc-rational-level-injection}; part~(ii) of the same
theorem identifies its image with the canonical irregular Hodge
level.  This proves \textup{(i)}.

The Yu filtration has locally finite lattice jumps.  Choose
\(\lambda^+>\lambda\) sufficiently close to \(\lambda\) that
\(F^{>\lambda}K=F^{\lambda^+}K\).  By \textup{(i)}, the
maps at \(\lambda^+\) and \(\lambda\) are injections in every
cohomological degree into the same total
de Rham group, and the internal transition identifies their images
with
\(F_{\mathrm{irr}}^{>\lambda}\subseteq
F_{\mathrm{irr}}^\lambda\).  The long exact sequence of
\[
 0\longrightarrow F^{\lambda^+}K
 \longrightarrow F^\lambda K
 \longrightarrow F^\lambda K/F^{\lambda^+}K
 \longrightarrow0
\]
therefore reduces to the quotient isomorphism in \textup{(ii)}.  This
uses the separately identified images at \(\lambda^+\) and
\(\lambda\), and remains valid
when \(\lambda\) and \(\lambda^+\) have different fractional parts,
as required by
\cref{rem:fractional-parameters-levelwise-comparison}.

Finally, for \(\lambda\leq0\), part~\textup{(i)} and
\eqref{eq:canonical-nonpositive-level-comparison} show that
restriction induces an isomorphism in every cohomological degree.
This proves \textup{(iii)}.
\end{proof}
 \section{Perfect Direct Images and Base Change for
\texorpdfstring{\(S\)}{S}-Linear Complexes}
\label{sec:perfect-s-linear-complexes}

This section isolates the finiteness arguments that do not use
differential operators.  We first treat a single coherent sheaf by
passing to the coarse moduli space.  Bounded \(S\)-linear complexes are
then reduced to this one-term case by stupid truncations.

\begin{definition}[Proper flat tame family]
\label{def:proper-flat-tame-family}
A \emph{proper flat tame family} over a finite-type complex scheme
\(S\) is a morphism
\[
 \pi\colon\mathfrak X\longrightarrow S
\]
which is proper, flat, and of finite presentation, where
\(\mathfrak X\) is a separated tame Deligne--Mumford stack.
\end{definition}

Let
\[
 \mathfrak X\xrightarrow{q}X\xrightarrow{p}S,
 \qquad
 \pi=p\circ q,
\]
be the factorization through the coarse moduli space.  If
\(g\colon T\to S\) is a morphism of finite-type complex schemes, put
\(\mathfrak X_T:=\mathfrak X\times_ST\) and \(X_T:=X\times_ST\).
Tameness identifies \(X_T\) with the coarse moduli space of
\(\mathfrak X_T\), and the resulting diagram
\begin{equation}
\begin{tikzcd}[ampersand replacement=\&]
	{\mathfrak X_T} \& {X_T} \& T \\
	{\mathfrak X} \& X \& S
	\arrow["{q_T}", from=1-1, to=1-2]
	\arrow["{g_{\mathfrak X}}"', from=1-1, to=2-1]
	\arrow["{p_T}", from=1-2, to=1-3]
	\arrow["{g_X}"', from=1-2, to=2-2]
	\arrow["g", from=1-3, to=2-3]
	\arrow["q", from=2-1, to=2-2]
	\arrow["p", from=2-2, to=2-3]
\end{tikzcd}
\label{eq:coarse-space-arbitrary-base-change-diagram}
\end{equation}
has cartesian squares.
The induced morphisms \(p\) and \(p_T\) are proper and of finite
presentation.

\begin{lemma}[Perfect direct image and proper base change for a flat
coherent sheaf]
\label{lem:s-linear-perfect-direct-image-flat-coherent-sheaf}
\label{lem:part-iii-pulled-back-vector-bundle-perfect-base-change}
\label{lem:s-linear-proper-base-change-flat-coherent-sheaf}
Let \(\pi\colon\mathfrak X\to S\) be a proper flat tame family and let
\(\mathcal E\) be a coherent \(\mathcal O_{\mathfrak X}\)-module flat
over \(S\).  Then \(q_*\mathcal E\) is a coherent
\(\mathcal O_X\)-module flat over \(S\), and
\begin{equation}
 R\pi_*\mathcal E
 \in D_{\mathrm{perf}}(S).
\label{eq:part-iii-linear-usual-pushforward-one-term}
\end{equation}
For every finite-type morphism \(g\colon T\to S\), the canonical map
\begin{equation}
 Lg^*R\pi_*\mathcal E
 \xrightarrow{\ \sim\ }
 R(\pi_T)_*g_{\mathfrak X}^*\mathcal E
\label{eq:part-iii-termwise-arbitrary-proper-base-change}
\end{equation}
is an isomorphism.  These isomorphisms are compatible with composition
of base changes and with morphisms of coherent \(S\)-flat sheaves.
\end{lemma}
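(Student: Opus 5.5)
The plan is to reduce everything to the coarse moduli space, using two properties of tame stacks: the coarse map \(q\) is cohomologically affine, and formation of \(q_*\) commutes with arbitrary base change on \(X\).

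First, \(q_*\mathcal E\) is coherent because \(q\) is proper and quasi-finite with \(X\) locally noetherian. Tameness makes \(q_*\) exact on quasi-coherent sheaves, so \(R\pi_*\mathcal E\simeq Rp_*q_*\mathcal E\).

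For flatness over \(S\), I would work étale-locally on \(X\). There \(\mathfrak X=[V/G]\) with \(V=\Spec A\) affine and \(G\) linearly reductive, and \(q_*\mathcal E\) corresponds to the invariants \(M^G\) of an \(A\)-module \(M\) that is flat over \(\mathcal O_S\). Since \(M^G\) is a functorial direct summand of \(M\) (Reynolds operator), \(M^G\) is flat over \(\mathcal O_S\). The same summand argument shows that \((M\otimes_{\mathcal O_S}N)^G=M^G\otimes_{\mathcal O_S}N\) for every \(\mathcal O_S\)-module \(N\). Globally this gives \(g_X^*q_*\mathcal E\simeq(q_T)_*g_{\mathfrak X}^*\mathcal E\), which is the Abramovich--Olsson--Vistoli base-change property of tame stacks. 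Since \(\mathcal E\) is \(S\)-flat, \(g_{\mathfrak X}^*\mathcal E\) is both the underived and the derived pullback.

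Next I would apply the standard result for schemes, or algebraic spaces \StacksTag{0A1H}/\StacksTag{08IS}, to the proper, finitely presented morphism \(p\) and the coherent \(S\)-flat sheaf \(\mathcal F:=q_*\mathcal E\). It gives \(Rp_*\mathcal F\in D_{\mathrm{perf}}(S)\) together with the base-change isomorphism \(Lg^*Rp_*\mathcal F\simeq R(p_T)_*Lg_X^*\mathcal F\). Flatness of \(\mathcal F\) over \(S\) makes the cartesian square Tor-independent, so this holds for arbitrary \(g\), and it also lets us replace \(Lg_X^*\) by \(g_X^*\). Composing with \(g_X^*q_*\mathcal E\simeq(q_T)_*g_{\mathfrak X}^*\mathcal E\) and with \(R(\pi_T)_*=R(p_T)_*(q_T)_*\), which uses exactness of \((q_T)_*\) since \(\mathfrak X_T\) is again tame, produces \eqref{eq:part-iii-termwise-arbitrary-proper-base-change}.

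I also need to check that this composite agrees with the canonical base-change map defined directly on \(\mathfrak X\). I would verify this by adjunction, since each constituent map is the adjoint of the unit for its pullback. The same adjunction description shows that the isomorphisms are natural in \(\mathcal E\) and compatible with composition \(T'\to T\to S\).

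I expect the main obstacle to be the coarse-space base-change statement \(g_X^*q_*=(q_T)_*g_{\mathfrak X}^*\) for non-flat \(g\). It relies on tameness and fails for wild stacks. I would cite it from Abramovich--Olsson--Vistoli rather than reprove it, beyond the local invariants sketch above.
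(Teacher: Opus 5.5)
Your proposal is correct and follows essentially the paper's route. Tameness of \(q\) (exactness and base-change compatibility of \(q_*\), after Abramovich--Olsson--Vistoli) reduces everything to \(Rp_*q_*\mathcal E\), and proper flat cohomology and base change for algebraic spaces then gives perfectness and the base-change isomorphism.

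The only variation is in proving \(S\)-flatness of \(q_*\mathcal E\). You argue \'etale-locally with the Reynolds operator, whereas the paper uses the projection formula \(q_*\mathcal E\otimes p^*M\simeq q_*(\mathcal E\otimes\pi^*M)\) together with exactness in \(M\). One small correction: the square is Tor-independent because \(\mathcal O_X=q_*\mathcal O_{\mathfrak X}\) is \(S\)-flat, not because \(\mathcal F\) is; the cited result only needs \(\mathcal F\) to be flat anyway.
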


\begin{proof}
For a tame stack, \(q_*\) is exact on quasi-coherent sheaves and the
formation of the coarse moduli space and of \(q_*\) commutes with
arbitrary base change
\cite[Definition~3.1, Theorem~3.2, and Corollary~3.3(a)]{AOV}.
In particular, \(q_*\mathcal E\) is coherent.  It is flat over \(S\):
for an \(\mathcal O_S\)-module \(M\), the projection formula gives
\[
 q_*\mathcal E\otimes p^*M
 \simeq
 q_*\bigl(\mathcal E\otimes\pi^*M\bigr),
\]
and the functors on the right are exact in \(M\).

Since \(q_*\) is exact,
\[
 R\pi_*\mathcal E
 \simeq Rp_*Rq_*\mathcal E
 \simeq Rp_*q_*\mathcal E.
\]
The algebraic-space form of proper flat cohomology now gives both
perfectness and arbitrary derived base change
\cite[\StacksTag{0CTM}]{Stacks}; its scheme precursor is
\cite[\StacksTag{07VK}]{Stacks}.  Using the compatibility of
\(q_*\) with the two cartesian squares in
\eqref{eq:coarse-space-arbitrary-base-change-diagram} identifies the
resulting base-change isomorphism with
\eqref{eq:part-iii-termwise-arbitrary-proper-base-change}.
Functoriality follows from the same construction.
\end{proof}

\begin{definition}[\(S\)-linear complexes and indexed systems]
\label{def:s-linear-complex}
\label{def:indexed-s-linear-complexes}
Let \(\pi\colon\mathfrak X\to S\) be a morphism.  An
\emph{\(S\)-linear complex} on \(\mathfrak X/S\) is a graded
\(\mathcal O_{\mathfrak X}\)-module \(\mathcal C^\bullet\) equipped
with \(\pi^{-1}\mathcal O_S\)-linear maps
\[
 d_{\mathcal C}^a\colon
 \mathcal C^a\longrightarrow\mathcal C^{a+1},
 \qquad
 d_{\mathcal C}^{a+1}d_{\mathcal C}^a=0.
\]
Morphisms are \(\pi^{-1}\mathcal O_S\)-linear chain maps.

Let \(\Lambda\) be an ordered set.  An \emph{indexed system of bounded
\(S\)-linear complexes with coherent \(S\)-flat terms} consists of a
proper flat tame family \(\pi\colon\mathfrak X\to S\), such a complex
\(\mathcal C^{\lambda,\bullet}\) for every \(\lambda\in\Lambda\), and
transition morphisms
\[
 \theta_{\mu,\lambda}\colon
 \mathcal C^{\mu,\bullet}\longrightarrow
 \mathcal C^{\lambda,\bullet}
 \qquad(\mu\geq\lambda)
\]
which are the identity for \(\mu=\lambda\) and are compatible with
composition.  Every term \(\mathcal C^{\lambda,a}\) is required to be
a coherent \(\mathcal O_{\mathfrak X}\)-module flat over \(S\).
\end{definition}

The structural morphism factors in ringed sites as
\begin{equation}
 (\mathfrak X,\mathcal O_{\mathfrak X})
 \xrightarrow{u}
 (\mathfrak X,\pi^{-1}\mathcal O_S)
 \xrightarrow{v}
 (S,\mathcal O_S),
 \qquad
 \pi=v\circ u.
\label{eq:factorization-ringed-sites}
\end{equation}
For a complex \(\mathcal C^\bullet\) of \(\pi^{-1}\mathcal O_S\)-modules, set
\begin{equation}
 R\pi_{*,S}\mathcal C^\bullet
 :=
 Rv_*\mathcal C^\bullet
 \in D(\mathcal O_S).
\label{eq:s-linear-derived-direct-image-notation}
\end{equation}
The direct image \(u_*\) is restriction of scalars and is exact.
For an \(\mathcal O_{\mathfrak X}\)-module \(\mathcal E\), write
\[
\operatorname{Res}_S\mathcal E
 :=u_*\mathcal E
 =Ru_*\mathcal E.
\]
Flatness of \(\pi\) makes the left adjoint \(u^*\) exact.
Hence \(u_*\) preserves injectives, and
\begin{equation}
 R\pi_{*,S}\operatorname{Res}_S\mathcal E
 \simeq
 R(v\circ u)_*\mathcal E
 =
 R\pi_*\mathcal E.
\label{eq:s-linear-restricted-usual-direct-image}
\end{equation}

The factorization \eqref{eq:factorization-ringed-sites} and base change
fit into the commutative diagram
\begin{equation}
\begin{tikzcd}
 {(\mathfrak X_T,\mathcal O_{\mathfrak X_T})}
  \arrow[r,"u_T"]
  \arrow[d,"g_{\mathfrak X}"']
 &
 {(\mathfrak X_T,\pi_T^{-1}\mathcal O_T)}
  \arrow[r,"v_T"]
  \arrow[d,"g_{\mathrm{sc}}"]
 &
 {(T,\mathcal O_T)}
  \arrow[d,"g"]
 \\
 {(\mathfrak X,\mathcal O_{\mathfrak X})}
  \arrow[r,"u"']
 &
 {(\mathfrak X,\pi^{-1}\mathcal O_S)}
  \arrow[r,"v"']
 &
 {(S,\mathcal O_S).}
\end{tikzcd}
\label{eq:s-linear-ringed-site-factorization}
\end{equation}
The morphism \(g_{\mathrm{sc}}\) in
\eqref{eq:s-linear-ringed-site-factorization} defines derived scalar
extension \(Lg_{\mathrm{sc}}^*\mathcal C^\bullet\).
If \(\mathcal E\) is flat over \(S\), then
\begin{equation}
 Lg_{\mathrm{sc}}^*\operatorname{Res}_S\mathcal E
 \simeq
 \operatorname{Res}_Tg_{\mathfrak X}^*\mathcal E.
\label{eq:s-linear-scalar-extension-restriction-comparison}
\end{equation}
Consequently, scalar extension of an \(S\)-linear complex with
coherent \(S\)-flat terms is computed termwise and has coherent
\(T\)-flat terms.

\begin{theorem}[Perfect direct images and proper base change for
\(S\)-linear complexes]
\label{thm:perfect-direct-images-s-linear-complexes}
\label{thm:part-iii-perfect-direct-image-arbitrary-base-change}
Let
\(\{\mathcal C^{\lambda,\bullet}\}_{\lambda\in\Lambda}\) be an
indexed system of bounded \(S\)-linear complexes with coherent
\(S\)-flat terms.
Put
\begin{equation}
 \mathcal R_S^\lambda
 :=
 R\pi_{*,S}\mathcal C^{\lambda,\bullet}.
\label{eq:abstract-perfect-direct-image-level}
\end{equation}
Then:
\begin{enumerate}[label=\textup{(\roman*)}]
\item
Every \(\mathcal R_S^\lambda\) belongs to
\(D_{\mathrm{perf}}(S)\).
\item
For every finite-type morphism \(g\colon T\to S\), the canonical map
\begin{equation}
 Lg^*\mathcal R_S^\lambda
 \xrightarrow{\ \sim\ }
 R(\pi_T)_{*,T}
 Lg_{\mathrm{sc}}^*\mathcal C^{\lambda,\bullet}
\label{eq:part-iii-level-perfect-arbitrary-base-change}
\end{equation}
is an isomorphism.  These isomorphisms are compatible with composition
of base changes and with every transition morphism
\(\theta_{\mu,\lambda}\).
\end{enumerate}
\end{theorem}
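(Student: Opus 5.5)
We reduce to the one-term case, \cref{lem:s-linear-perfect-direct-image-flat-coherent-sheaf}, by induction on the length of the complex, using stupid truncations. Fix \(\lambda\) and write \(\mathcal C^\bullet=\mathcal C^{\lambda,\bullet}\). It is concentrated in degrees \([a_0,a_1]\), and we induct on \(a_1-a_0\).

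The stupid truncations give a short exact sequence of \(S\)-linear complexes
\[
 0\longrightarrow\sigma_{\geq a+1}\mathcal C^\bullet
 \longrightarrow\sigma_{\geq a}\mathcal C^\bullet
 \longrightarrow\operatorname{Res}_S\mathcal C^a[-a]
 \longrightarrow0 .
\]
The differentials are only \(\pi^{-1}\mathcal O_S\)-linear, but the graded pieces are \(\mathcal O_{\mathfrak X}\)-modules viewed by restriction of scalars. Applying \(R\pi_{*,S}=Rv_*\) gives a distinguished triangle in \(D(\mathcal O_S)\). By \eqref{eq:s-linear-restricted-usual-direct-image}, its third term is \(R\pi_*\mathcal C^a[-a]\), which is perfect by \cref{lem:s-linear-perfect-direct-image-flat-coherent-sheaf}. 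Since \(D_{\mathrm{perf}}(S)\) is a thick triangulated subcategory, induction gives (i).

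For (ii), we first construct the base-change map for arbitrary complexes of \(\pi^{-1}\mathcal O_S\)-modules. It comes from the commutative diagram \eqref{eq:s-linear-ringed-site-factorization}: adjunction gives \(\mathcal C^\bullet\to Rg_{\mathrm{sc},*}Lg_{\mathrm{sc}}^*\mathcal C^\bullet\). Applying \(Rv_*\) and using \(Rv_*Rg_{\mathrm{sc},*}=Rg_*R(v_T)_*\) produces a map \(\mathcal R_S^\lambda\to Rg_*R(\pi_T)_{*,T}Lg_{\mathrm{sc}}^*\mathcal C^\bullet\). Adjunction then turns this into \eqref{eq:part-iii-level-perfect-arbitrary-base-change}.

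This map is a natural transformation of triangulated functors, so it is compatible with the triangles above. Two facts make this work. First, \(Lg_{\mathrm{sc}}^*\) is computed termwise on complexes with \(S\)-flat terms, by \eqref{eq:s-linear-scalar-extension-restriction-comparison}. Second, it sends the truncation sequence to the truncation sequence of \(Lg_{\mathrm{sc}}^*\mathcal C^\bullet\), which stays exact because the terms are \(S\)-flat.

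On the one-term piece, \eqref{eq:s-linear-restricted-usual-direct-image} over \(S\) and over \(T\), together with \eqref{eq:s-linear-scalar-extension-restriction-comparison}, identify the map with the isomorphism \eqref{eq:part-iii-termwise-arbitrary-proper-base-change}. The five lemma in the triangulated sense, applied along the induction, then shows the map is an isomorphism for every bounded complex.

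Compatibility with the transition maps \(\theta_{\mu,\lambda}\) and with composites \(T'\to T\to S\) follows because the map is built from adjunction units and the pseudofunctoriality of \(L(-)^*\) and \(R(-)_*\). It is therefore natural in morphisms of complexes and satisfies the usual cocycle condition. We do not need \(\theta_{\mu,\lambda}\) to respect truncations, since naturality is checked at the level of functors.

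The main obstacle is the identification step. We must check that, on a single coherent \(S\)-flat term, the abstract map built through the intermediate site \((\mathfrak X,\pi^{-1}\mathcal O_S)\) agrees with the base-change map of \cref{lem:s-linear-perfect-direct-image-flat-coherent-sheaf}. We would handle this by comparing both with the base-change map for \(\pi=v\circ u\). The key inputs are that \(u_*\) is exact and preserves injectives, since \(u^*\) is exact by flatness of \(\pi\), and the analogous statement for \(u_T\). Given these, \(Ru_*\) and \(Lu^*\) commute with the relevant base-change transformations, and the two constructions coincide by the standard compatibility of base-change maps with composition of morphisms of ringed sites.
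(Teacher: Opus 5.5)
Your proposal is correct and follows essentially the same route as the paper. Both filter by stupid truncations, reduce perfectness and base change to the one-term case of \cref{lem:s-linear-perfect-direct-image-flat-coherent-sheaf} via \eqref{eq:s-linear-restricted-usual-direct-image} and \eqref{eq:s-linear-scalar-extension-restriction-comparison}, and conclude by two-out-of-three along the truncation triangles; your adjunction construction of the base-change map and your check that it agrees with the one-term map spell out a step the paper only asserts, without adding a new ingredient.
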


\begin{proof}
Fix \(\lambda\) and filter
\(\mathcal C^{\lambda,\bullet}\), as a complex of \(\pi^{-1}\mathcal O_S\)-modules, by
stupid truncations.  There are short exact sequences
\begin{equation}
 0\longrightarrow
 \sigma_{\geq a+1}
 \mathcal C^{\lambda,\bullet}
 \longrightarrow
 \sigma_{\geq a}
 \mathcal C^{\lambda,\bullet}
 \longrightarrow
 \operatorname{Res}_S\mathcal C^{\lambda,a}[-a]
 \longrightarrow0.
\label{eq:part-iii-stupid-filtration-short-exact}
\end{equation}
After applying \(R\pi_{*,S}\), the resulting finite filtration has
graded pieces
\begin{equation}
 \operatorname{gr}_{\sigma}^a
 R\pi_{*,S}\mathcal C^{\lambda,\bullet}
 \simeq
 R\pi_{*,S}\operatorname{Res}_S
 \mathcal C^{\lambda,a}[-a]
 \simeq
 R\pi_*\mathcal C^{\lambda,a}[-a].
\label{eq:s-linear-direct-image-stupid-graded}
\end{equation}
They are perfect by
\cref{lem:s-linear-perfect-direct-image-flat-coherent-sheaf}.
Since perfect complexes form a triangulated subcategory, descending
through the finite system of truncation triangles proves \textup{(i)}.

Apply \(Lg^*\) to the same triangles and compare them with the
truncation triangles after scalar extension.  For each one-term
quotient, \eqref{eq:s-linear-restricted-usual-direct-image},
\eqref{eq:s-linear-scalar-extension-restriction-comparison}, and
\eqref{eq:part-iii-termwise-arbitrary-proper-base-change} identify the
base-change map with an isomorphism
\[
 Lg^*R\pi_{*,S}\operatorname{Res}_S\mathcal C^{\lambda,a}
 \xrightarrow{\ \sim\ }
 R(\pi_T)_{*,T}
 Lg_{\mathrm{sc}}^*\operatorname{Res}_S\mathcal C^{\lambda,a}.
\]
Descending induction and two-out-of-three therefore prove
\textup{(ii)}.  Every construction is functorial, which gives the
stated compatibilities.
\end{proof}

\begin{corollary}[Derived fibres and transition cones]
\label{cor:abstract-flat-complex-derived-fibres-cones}
Let \(s\to S\) be a geometric point.  There are canonical
isomorphisms
\[
 Li_s^*\mathcal R_S^\lambda
 \simeq
 R\Gamma
 \left(\mathfrak X_s,
       Li_{s,\mathrm{sc}}^*\mathcal C^{\lambda,\bullet}\right).
\]
Moreover, for \(\mu\geq\lambda\), the cone of
\(\mathcal R_S^\mu\to\mathcal R_S^\lambda\) is perfect and commutes
with arbitrary finite-type base change.
\end{corollary}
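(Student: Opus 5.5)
The corollary follows from \cref{thm:perfect-direct-images-s-linear-complexes} by specializing the base-change statement to a geometric point and then using that perfect complexes and base-change isomorphisms are stable under cones. I would treat the two assertions separately.

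\emph{Derived fibres.} Take \(g=i_s\colon s=\Spec k\to S\), with \(k\) algebraically closed. This is a finite-type morphism when \(s\) is a closed point. For a general geometric point, factor \(i_s\) through the point \(\Spec\kappa(x)\to S\) of its image followed by a field extension. Flat base change along \(\Spec k\to\Spec\kappa(x)\) is harmless: it is exact and commutes with cohomology of quasi-coherent sheaves on the proper tame family, so the coarse-space argument of \cref{lem:s-linear-perfect-direct-image-flat-coherent-sheaf} applies verbatim. Part~(ii) of the theorem then gives
\[
 Li_s^*\mathcal R_S^\lambda\simeq R(\pi_s)_{*,s}Li_{s,\mathrm{sc}}^*\mathcal C^{\lambda,\bullet}.
\]
Over the point \(s\), the structure sheaf \(\pi_s^{-1}\mathcal O_s\) is the constant sheaf \(k\). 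Hence \(v_s\) is just the global-section functor on \(k\)-linear sheaves, and \(R(\pi_s)_{*,s}=R\Gamma(\mathfrak X_s,-)\). By \eqref{eq:s-linear-scalar-extension-restriction-comparison}, \(Li_{s,\mathrm{sc}}^*\) is computed termwise, so the right-hand side is the stated hypercohomology complex. Canonicity is inherited from the theorem.

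\emph{Cones.} Apply \(R\pi_{*,S}\) to \(\theta_{\mu,\lambda}\) and choose a cone \(\mathcal K\). For well-definedness and functoriality I would not pick a cone in the derived category. Instead I would realize \(\mathcal K\) as \(R\pi_{*,S}\) of the mapping cone \(\operatorname{Cone}(\theta_{\mu,\lambda})\) taken at the level of complexes. This cone is again a bounded \(S\)-linear complex with coherent \(S\)-flat terms \(\mathcal C^{\mu,a+1}\oplus\mathcal C^{\lambda,a}\). Its differential is \(\pi^{-1}\mathcal O_S\)-linear because \(\theta_{\mu,\lambda}\) is. The two-element system \(\{\operatorname{Cone}\}\), or the system \(\{\mathcal C^\mu,\mathcal C^\lambda,\operatorname{Cone}\}\) with its evident maps, satisfies \cref{def:indexed-s-linear-complexes}. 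So \cref{thm:perfect-direct-images-s-linear-complexes} applies directly and shows that \(\mathcal K\simeq R\pi_{*,S}\operatorname{Cone}(\theta_{\mu,\lambda})\) is perfect and commutes with finite-type base change. Since \(R\pi_{*,S}\) and \(Lg_{\mathrm{sc}}^*\) are triangulated and termwise on flat terms, the distinguished triangle \(\mathcal R^\mu\to\mathcal R^\lambda\to\mathcal K\) is carried to the corresponding triangle after base change. This uses the compatibility with transition maps in part~(ii) of the theorem.

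The main subtlety is the non-closed geometric point, where \(i_s\) is not of finite type. I expect the factorization through the residue field plus flat base change to handle it. Alternatively, the paper may intend closed geometric points only, in which case the first step is immediate.
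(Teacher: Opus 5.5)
Your derived-fibre step is the same as the paper's: take \(T=s\) in \cref{thm:perfect-direct-images-s-linear-complexes}\textup{(ii)} and identify \(R(\pi_s)_{*,s}\) with \(R\Gamma(\mathfrak X_s,-)\).

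For the cone you take a genuinely different route. The paper stays in the derived category. The cone of \(\mathcal R_S^\mu\to\mathcal R_S^\lambda\) is perfect because perfect complexes form a triangulated subcategory. Its base change follows by applying \(Lg^*\) to the transition triangle and using the transition-compatible isomorphisms of part \textup{(ii)} on the first two vertices. You instead form the chain-level mapping cone of \(\theta_{\mu,\lambda}\). It is again a bounded \(S\)-linear complex with coherent \(S\)-flat terms \(\mathcal C^{\mu,a+1}\oplus\mathcal C^{\lambda,a}\); a one-element indexed system already suffices. You then apply the theorem to it directly, using that \(R\pi_{*,S}\) is triangulated and that \(Lg_{\mathrm{sc}}^*\) is computed termwise on flat terms, so it commutes with mapping cones. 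Your version gives a canonical, functorial base-change isomorphism for a specific model of the cone. The triangle argument only produces some isomorphism of third vertices via the triangulated axioms. The paper's version is shorter and uses nothing beyond the statement of part \textup{(ii)}.

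One correction to your remark on non-closed geometric points. \(\Spec k\to S\) is of finite type only when \(k=\mathbb C\), so the paper's ``\(T=s\)'' literally covers only closed points. Factoring through \(\Spec\kappa(x)\) does not remove the problem, because \(\Spec\kappa(x)\to S\) is not of finite type either when \(x\) is not closed. The real fix is that the inputs to \cref{lem:s-linear-perfect-direct-image-flat-coherent-sheaf} hold for arbitrary morphisms \(g\). These inputs are the Abramovich--Olsson--Vistoli compatibility of coarse spaces and \(q_*\) with base change, and the Stacks Project proper flat base change. Hence the lemma and the theorem apply to \(g=i_s\) directly. Alternatively, closed points suffice for every later use, since \(S\) is Jacobson.
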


\begin{proof}
The fibre formula is
\cref{thm:perfect-direct-images-s-linear-complexes} with \(T=s\).
The same theorem makes the cone perfect, and its base-change assertion
follows by applying derived pullback to the transition triangle and using
\eqref{eq:part-iii-level-perfect-arbitrary-base-change} on its first
two vertices.
\end{proof}

The base-change formula reduces geometric questions about the direct
images to their derived fibres.  We therefore record two criteria for
perfect complexes.
We use the same notation \(Y\) for the base, \(y\in Y\) for a point,
and \(\kappa(y)\) for its residue field throughout.

\begin{lemma}[Fibrewise conservativity for perfect
complexes]
\label{lem:part-iii-perfect-fibrewise-isomorphism-criterion}
Let \(Y\) be a scheme and let
\(\varphi\colon K\to L\) be a morphism in
\(D_{\mathrm{perf}}(Y)\).  Then \(\varphi\) is an isomorphism if and
only if its derived fibre
\[
 \varphi\otimes_{\mathcal O_Y}^L\kappa(y)
\]
is an isomorphism for every point \(y\in Y\).  Equivalently, it is
enough to test the derived pullback to every geometric point of
\(Y\).
\end{lemma}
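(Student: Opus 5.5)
The plan is to reduce the statement to the case of a single perfect complex and use the standard local structure of perfect complexes over local rings. Let \(C\) be a cone of \(\varphi\), so that \(K\to L\to C\to K[1]\) is a distinguished triangle in \(D_{\mathrm{perf}}(Y)\). Derived pullback \(-\otimes^L_{\mathcal O_Y}\kappa(y)\) is exact, so \(C\otimes^L\kappa(y)\) is a cone of \(\varphi\otimes^L\kappa(y)\). Hence \(\varphi\) is an isomorphism if and only if \(C\simeq0\), and \(\varphi\otimes^L\kappa(y)\) is an isomorphism if and only if \(C\otimes^L\kappa(y)\simeq0\). The direction ``isomorphism implies fibrewise isomorphism'' is immediate. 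So it suffices to show that a perfect complex \(C\) with \(C\otimes^L\kappa(y)=0\) for all \(y\in Y\) is zero.

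Vanishing of \(C\) is local, and the stalk at \(y\) detects it: a complex of \(\mathcal O_Y\)-modules is acyclic if and only if all its stalks are acyclic. On a local ring \(A=\mathcal O_{Y,y}\), the stalk \(C_y\) is perfect, so it is represented by a bounded complex \(P^\bullet\) of finite free \(A\)-modules. If \(C_y\neq0\), let \(b\) be the largest degree with \(H^b(P^\bullet)\ne0\). After truncation we may assume \(P^j=0\) for \(j>b\); this keeps the complex perfect, because a perfect complex with top cohomology in degree \(b\) is quasi-isomorphic to a bounded finite free complex concentrated in degrees \(\le b\). Then \(H^b(P^\bullet)=\operatorname{coker}(P^{b-1}\to P^b)\) is a nonzero finitely generated module. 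Right exactness of \(\otimes\kappa(y)\) gives
\[
H^b\bigl(P^\bullet\otimes_A\kappa(y)\bigr)=H^b(P^\bullet)\otimes_A\kappa(y),
\]
which is nonzero by Nakayama. This contradicts \(C\otimes^L\kappa(y)=0\), since \(C_y\otimes^L_A\kappa(y)\) is the derived fibre at \(y\). Hence \(C=0\).

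For the geometric-point version, let \(\bar y\to Y\) be a geometric point over \(y\) with algebraically closed field \(k\supseteq\kappa(y)\). Then
\[
C\otimes^L\kappa(y)\otimes_{\kappa(y)}k\simeq C\otimes^L k,
\]
and field extension is faithfully flat, so vanishing can be tested after passing to \(k\). Conversely, every point has a geometric point over it.

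The only step needing care is the truncation to top degree \(b\) while preserving a finite free model. This is the standard fact that a perfect complex over a local ring has a minimal free resolution. Equivalently, one can avoid truncation by using the bounded-above finite free model and applying Nakayama to the highest nonvanishing cohomology directly, since the right-exactness argument only needs \(P^j=0\) above degree \(b\) up to quasi-isomorphism. One way to arrange this is to replace \(P^b\) by the image-free part via the usual killing of split summands.
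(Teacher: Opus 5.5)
Your proof is correct, but the key step follows a different route from the paper's. Both arguments pass to the cone of \(\varphi\), and both reduce geometric points to ordinary points by faithful flatness of field extensions.

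The paper then shows that a perfect complex \(Q\) with all derived fibres zero must vanish by citing \cite[\StacksTag{0BDK}]{Stacks} with \(i=0\), \(r=0\). That lemma gives an open \(V\subset Y\) through which \(f\colon Z\to Y\) factors exactly when \(Lf^*Q=0\). Every \(\Spec\kappa(y)\to Y\) factors through \(V\), so \(V=Y\), and the lemma then yields \(Q=0\).

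You instead prove this vanishing by hand. Acyclicity is checked on stalks, and \(C_y\) has a bounded finite free model over \(\mathcal O_{Y,y}\). Right exactness of \(\otimes\kappa(y)\) plus Nakayama at the top nonvanishing cohomology degree then force \(C_y=0\).

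Your route is elementary and self-contained; in effect it reproves the rank-zero case of the cited lemma pointwise. It also needs no noetherian hypothesis, since only finite generation of the top cohomology is used. The citation is shorter, and it carries extra information the paper does not need here: the vanishing locus of a perfect complex is open and functorially characterized.

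One step should be stated more precisely: the reduction to \(P^j=0\) for \(j>b\). It is not a replacement of \(P^b\) by an ``image-free part''. It is a descending induction on the top degree \(N\) of \(P^\bullet\). If \(H^N(P^\bullet)=0\), then \(P^{N-1}\to P^N\) is surjective, hence split. Its kernel is a direct summand of a finite free module over a local ring, hence free. Discarding the contractible summand \(P^N\xrightarrow{\mathrm{id}}P^N\) shortens the complex without changing its quasi-isomorphism class or its derived fibre.

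You can run this induction directly from the hypothesis, using \(H^N(P^\bullet\otimes\kappa(y))=H^N(P^\bullet)\otimes\kappa(y)\) and Nakayama at each step. That avoids choosing \(b\) at all.
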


\begin{proof}
The forward implication follows from functoriality of derived
pullback.  Conversely, let
\(Q:=\operatorname{Cone}(\varphi)\).  Then \(Q\) is perfect, and the
hypothesis says that
\(Q\otimes_{\mathcal O_Y}^L\kappa(y)=0\) for every \(y\in Y\).
Testing geometric points is equivalent, since extension from a
residue field to an algebraic closure is faithfully flat.

Apply \cite[\StacksTag{0BDK}]{Stacks} to \(Q\), with
\(i=0\) and rank \(r=0\).  It gives an open subscheme \(V\subset Y\)
characterized by the property that a morphism \(f\colon Z\to Y\)
factors through \(V\) if and only if \(Lf^*Q\) is the zero complex.
Every point of \(Y\) therefore factors through \(V\), so \(V=Y\).
The same cited lemma now gives \(Q=0\), so \(\varphi\) is
an isomorphism.
\end{proof}

\begin{lemma}[Fibre cohomology of a perfect complex]
\label{lem:part-iii-constant-perfect-fibre-cohomology}
Let \(Y\) be a scheme and let \(K\in D_{\mathrm{perf}}(Y)\).  For
every \(b\), the function
\[
 h_K^b\colon
 |Y|\longrightarrow\mathbb Z_{\geq0},
 \qquad
 y\longmapsto
 \dim_{\kappa(y)}
 H^b\!\left(K\otimes_{\mathcal O_Y}^L\kappa(y)\right)
\]
is upper semicontinuous.

If, in addition, \(Y\) is reduced and every \(h_K^b\) is locally
constant, then every \(\mathcal H^b(K)\) is finite locally free, the
canonical maps
\begin{equation}
 f^*\mathcal H^b(K)
 \xrightarrow{\ \sim\ }
 \mathcal H^b(Lf^*K)
\label{eq:part-iii-constant-perfect-cohomology-base-change}
\end{equation}
are isomorphisms for arbitrary morphisms \(f\colon Z\to Y\), and
locally on \(Y\) there is an isomorphism
\[
 K
 \simeq
 \bigoplus_b\mathcal H^b(K)[-b].
\]
\end{lemma}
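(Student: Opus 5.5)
The plan is to work locally on affine opens \(Y=\operatorname{Spec}A\) and represent \(K\) by a bounded complex of finite free modules, using the standard minimal-complex reduction at a point.

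\emph{Semicontinuity.}  Fix \(y\in Y\).  After shrinking \(Y\) to an affine neighbourhood of \(y\), we may assume \(K\) is a bounded complex \(P^\bullet\) of finite free modules (\cite[\StacksTag{0BCD}]{Stacks} style local structure of perfect complexes).  We then make it minimal at \(y\): whenever some entry of a differential \(P^b\to P^{b+1}\) is a unit at \(y\), we split off a contractible summand \(A\xrightarrow{1}A\) after localizing.  Iterating finitely often produces \(P^\bullet\) with \(d\otimes\kappa(y)=0\), so that \(\operatorname{rank}P^b=h_K^b(y)\).  For any \(y'\) in this neighbourhood, \(H^b(P^\bullet\otimes\kappa(y'))\) is a subquotient of \(P^b\otimes\kappa(y')\), so \(h_K^b(y')\le\operatorname{rank}P^b=h_K^b(y)\).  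This proves upper semicontinuity.

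\emph{Local freeness under local constancy.}  Keep the minimal representative at \(y\) and shrink further to a connected neighbourhood on which all \(h_K^b\) are constant.  At every \(y'\) we then have
\[
 \dim H^b(P\otimes\kappa(y'))=\operatorname{rank}P^b,
\]
which forces \(d\otimes\kappa(y')=0\) in every degree: the ranks of \(d^{b-1}\otimes\kappa(y')\) and \(d^b\otimes\kappa(y')\) reduce the dimension, so both must vanish.  Hence every matrix entry of every differential lies in every prime ideal, so in the nilradical, and reducedness of \(Y\) gives \(d=0\).  Therefore \(K\simeq\bigoplus_bP^b[-b]\) locally, with each \(\mathcal H^b(K)=P^b\) free.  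This gives the local splitting.

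\emph{Base change.}  For an arbitrary \(f\colon Z\to Y\), the pullback \(Lf^*K\) is computed by \(f^*P^\bullet\), which again has zero differential.  Hence \(\mathcal H^b(Lf^*K)=f^*P^b=f^*\mathcal H^b(K)\), and the canonical map is this identification.  Its canonicity, and hence gluing over an affine cover, follows because the map \eqref{eq:part-iii-constant-perfect-cohomology-base-change} is defined globally and is an isomorphism locally.

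The main obstacle is the minimal-complex reduction, specifically showing that splitting off unit entries preserves the quasi-isomorphism class over the localization.  I would handle it by Gaussian elimination on a unit entry: this produces a direct sum decomposition with a contractible two-term complex, and each elimination drops the total rank, so the process terminates.
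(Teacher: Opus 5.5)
Your proposal is correct and follows the paper's argument. You represent \(K\) near \(y\) by a finite free complex with \(d\otimes\kappa(y)=0\); the paper simply cites \cite[\StacksTag{0BCD}]{Stacks} for this, while you sketch the Gaussian elimination behind it. Constancy of the fibre dimensions then forces every differential to vanish at every point, and reducedness gives \(d=0\), which yields local freeness, the local splitting, and base change at once. The only difference is that you derive upper semicontinuity from the same minimal representative instead of citing \cite[\StacksTag{0BDI}]{Stacks}.

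One small correction: drop the word ``connected'', since a point of a general scheme need not have a connected open neighbourhood. You do not need it, because local constancy already gives a neighbourhood on which the finitely many relevant \(h_K^b\) are constant.
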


\begin{proof}
Upper semicontinuity is
\cite[\StacksTag{0BDI}]{Stacks}.  Suppose now that \(Y\) is reduced
and every \(h_K^b\) is locally constant.
Fix \(y\in Y\), and let the common local fibre dimensions be
\(d_b\).  The local normal-form result
\cite[\StacksTag{0BCD}]{Stacks} says precisely
that a perfect complex over a ring, at a chosen prime, is
represented after localization by a finite complex whose term in
degree \(b\) is free of rank
\[
 \dim_{\kappa(y)}
 H^b(K\otimes^L\kappa(y)).
\]
Thus, after shrinking around \(y\), represent \(K\) by
\[
 \cdots
 \longrightarrow
 \mathcal O_Y^{d_{b-1}}
 \xrightarrow{d^{b-1}}
 \mathcal O_Y^{d_b}
 \xrightarrow{d^b}
 \mathcal O_Y^{d_{b+1}}
 \longrightarrow\cdots.
\]

At every point \(z\) of this neighbourhood, the middle vector
space has dimension \(d_b\), while its cohomology also has
dimension \(d_b\).  The equality
\[
 \dim H^b
 =
 d_b-\operatorname{rank}(d^{b-1}_z)
    -\operatorname{rank}(d^b_z)
\]
forces both displayed ranks to be zero.  Hence every matrix entry
of every differential vanishes at every point.  Since \(Y\) is
reduced, all these entries are zero.  The local complex therefore
has zero differential.  Its terms are its cohomology sheaves,
which proves local freeness, the local direct-sum decomposition,
and
\eqref{eq:part-iii-constant-perfect-cohomology-base-change}.
\end{proof}

For later use, we record the consequence of having only finitely many
nontrivial transitions.  The system then stabilizes toward
\(-\infty\), so its homotopy colimit is represented by any sufficiently
low level, and the preceding perfectness, base-change, and fibre
formulas extend to the limiting object and the transition cones.

\begin{lemma}[Finite-support systems of \(S\)-linear complexes]
\label{lem:finite-support-s-linear-system}
Let
\(\{\mathcal C^{\lambda,\bullet}\}_{\lambda\in\Lambda}\) be an
indexed system of bounded \(S\)-linear complexes with coherent
\(S\)-flat terms.  Choose a strictly increasing sequence
\[
 \cdots<\lambda_{i-1}<\lambda_i<\lambda_{i+1}<\cdots
 \qquad(i\in\mathbb Z)
\]
which is cofinal toward \(-\infty\), and put
\begin{equation}
 \mathcal R_i
 :=R\pi_{*,S}\mathcal C^{\lambda_i,\bullet},
 \qquad
 \mathcal G_i
 :=\operatorname{Cone}(\mathcal R_{i+1}\longrightarrow\mathcal R_i),
 \qquad
 \mathcal R_{-\infty}
 :=\operatorname*{hocolim}_{i\to-\infty}\mathcal R_i.
\label{eq:katz-oda-perfect-system}
\end{equation}
Write
\(\jmath_i\colon\mathcal R_i\to\mathcal R_{-\infty}\) for the
cocone morphisms.  Assume that \(\mathcal G_i\) is acyclic for all
but finitely many \(i\).  Then:
\begin{enumerate}[label=\textup{(\roman*)}]
\item
Every \(\mathcal R_i\), \(\mathcal G_i\), and
\(\mathcal R_{-\infty}\) is perfect.  For all sufficiently negative
\(i\), the cocone morphism
\begin{equation}
 \jmath_i\colon
 \mathcal R_i\xrightarrow{\ \sim\ }\mathcal R_{-\infty}
\label{eq:katz-oda-low-representative}
\end{equation}
is an isomorphism.

\item
These objects commute with arbitrary finite-type derived base
change.  More precisely, for \(g\colon T\to S\), put
\[
 \mathcal R_{T,i}
 :=R(\pi_T)_{*,T}
   Lg_{\mathrm{sc}}^*\mathcal C^{\lambda_i,\bullet},
 \qquad
 \mathcal G_{T,i}
 :=\operatorname{Cone}(\mathcal R_{T,i+1}\to\mathcal R_{T,i}).
\]
Then there are canonical isomorphisms
\[
 Lg^*\mathcal R_i\simeq\mathcal R_{T,i},
 \qquad
 Lg^*\mathcal G_i\simeq\mathcal G_{T,i},
 \qquad
 Lg^*\mathcal R_{-\infty}
 \simeq
 \operatorname*{hocolim}_{i\to-\infty}\mathcal R_{T,i}.
\]

\item
For every geometric point \(s\to S\), there are canonical derived
fibre identifications
\begin{align}
 Li_s^*\mathcal R_i
 &\simeq
 R\Gamma\!\left(
  \mathfrak X_s,
  Li_{s,\mathrm{sc}}^*\mathcal C^{\lambda_i,\bullet}
 \right),
\label{eq:katz-oda-derived-level-fibre}\\
 Li_s^*\mathcal G_i
 &\simeq
 \operatorname{Cone}\!\left\{
  R\Gamma\!\left(
   \mathfrak X_s,
   Li_{s,\mathrm{sc}}^*\mathcal C^{\lambda_{i+1},\bullet}
  \right)
  \longrightarrow
  R\Gamma\!\left(
   \mathfrak X_s,
   Li_{s,\mathrm{sc}}^*\mathcal C^{\lambda_i,\bullet}
 \right)
 \right\},
\label{eq:katz-oda-derived-cone-fibre}
\end{align}
\end{enumerate}
\end{lemma}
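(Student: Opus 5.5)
The plan is to reduce everything to \cref{thm:perfect-direct-images-s-linear-complexes} and \cref{cor:abstract-flat-complex-derived-fibres-cones}, applied levelwise, together with the observation that a sequential homotopy colimit which stabilizes is represented by any of its stable terms.

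First, since each \(\mathcal R_i\) is perfect by \cref{thm:perfect-direct-images-s-linear-complexes}(i), each cone \(\mathcal G_i\) is perfect by the triangulated-subcategory property. Next, let \(N\) be such that \(\mathcal G_i\) is acyclic for all \(i<N\). Then each transition \(\mathcal R_{i+1}\to\mathcal R_i\) with \(i<N\) is an isomorphism in \(D(\mathcal O_S)\). Hence the sequential system \(\mathcal R_N\to\mathcal R_{N-1}\to\cdots\) is isomorphic to the constant system on \(\mathcal R_N\).

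To identify the homotopy colimit, I use the defining triangle
\[
 \bigoplus_{i\le N}\mathcal R_i\xrightarrow{1-\mathrm{shift}}\bigoplus_{i\le N}\mathcal R_i\longrightarrow\mathcal R_{-\infty}.
\]
For a system of isomorphisms, this triangle splits and shows that each \(\jmath_i\), \(i\le N\), is an isomorphism. One can also argue directly: on cohomology sheaves, the hocolim computes the filtered colimit of \(\mathcal H^b(\mathcal R_i)\), which is eventually constant. Cofinality lets me discard the levels \(i>N\). This gives \eqref{eq:katz-oda-low-representative}, and \(\mathcal R_{-\infty}\simeq\mathcal R_N\) is perfect, proving (i).

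For (ii), the first isomorphism is \eqref{eq:part-iii-level-perfect-arbitrary-base-change} with \(\lambda=\lambda_i\). Its compatibility with the transition morphisms \(\theta_{\lambda_{i+1},\lambda_i}\) gives a morphism of triangles. The cone identification \(Lg^*\mathcal G_i\simeq\mathcal G_{T,i}\) then follows as in \cref{cor:abstract-flat-complex-derived-fibres-cones}, since \(Lg^*\) is exact.

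The main obstacle is canonicity, because cones are only unique up to non-unique isomorphism. I would handle this in one of two ways. One option is to realize the cones as actual mapping cones of chain complexes: take \(\operatorname{Cone}(\theta)\) at the level of \(S\)-linear complexes and push it forward by \(R\pi_{*,S}\). This cone again has coherent \(S\)-flat terms, so \cref{thm:perfect-direct-images-s-linear-complexes} applies to it directly. The other option is to note that \(\mathcal C^{\lambda_i}\) and the mapping cone fit into a strict indexed system. I would use the first option.

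For the hocolim, \(Lg^*\) commutes with direct sums and with cones, hence with sequential homotopy colimits. The identification with \(\operatorname*{hocolim}\mathcal R_{T,i}\) then follows termwise from the level isomorphisms, which are compatible with the transitions. As a consistency check, the base-changed system also stabilizes below the same \(N\), since \(\mathcal G_{T,i}\simeq Lg^*\mathcal G_i=0\).

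Finally, (iii) is the special case \(T=s\) of (ii). The level formula is \cref{cor:abstract-flat-complex-derived-fibres-cones}, and the cone formula follows by applying \(Li_s^*\) to the chain-level mapping cone and using \eqref{eq:s-linear-scalar-extension-restriction-comparison} termwise. This gives \eqref{eq:katz-oda-derived-level-fibre} and \eqref{eq:katz-oda-derived-cone-fibre}.
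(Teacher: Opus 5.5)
Your proposal is correct and follows essentially the paper's argument. Both apply \cref{thm:perfect-direct-images-s-linear-complexes} and \cref{cor:abstract-flat-complex-derived-fibres-cones} levelwise, represent the stabilized homotopy colimit by any level below the finitely many nonacyclic cones, and commute \(Lg^*\) with the homotopy colimit. Your chain-level mapping-cone device only makes explicit the canonicity of the cone identifications, which the paper simply attributes to ``functoriality.''
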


\begin{proof}
\Cref{thm:perfect-direct-images-s-linear-complexes,cor:abstract-flat-complex-derived-fibres-cones}
give perfectness, derived base change, and the first two fibre
identifications for every level and transition cone.  Choose
\(i_-\leq i_+\) such that \(\mathcal G_i\simeq0\) for
\(i<i_-\) and \(i>i_+\).  The transition maps below \(i_-\) are
isomorphisms, so cofinality gives
\(\mathcal R_i\simeq\mathcal R_{-\infty}\) for \(i\leq i_-\).
This proves the remaining perfectness assertion.

Derived pullback preserves homotopy colimits.  Applying it to
\eqref{eq:katz-oda-perfect-system} and using levelwise base change
gives the asserted formula for
\(Lg^*\mathcal R_{-\infty}\).  The cone formula and all
compatibilities follow from functoriality.
\end{proof}
 \section{De Rham DG Modules and the Katz--Oda Filtration}
\label{sec:de-rham-dg-modules-katz-oda}

The geometric complexes used below are naturally modules over de
Rham differential graded algebras.  This language records both their
differentials and the exterior action of forms, and makes the passage
from an absolute complex to its relative quotient formal.  The
perfectness arguments have already been isolated in
\cref{sec:perfect-s-linear-complexes}; the additional de
Rham structure here produces the Katz--Oda operators and the
connection used in the cohomological exactness theorem below.

For the particular one-parameter toric Landau--Ginzburg family used
in their quasi-stable degeneration, Harder--Lee carry out the
corresponding logarithmic Gauss--Manin construction in
\cite[\S8]{HarderLee}.  The purpose of the formalism developed here is
to isolate this mechanism for general algebraic bases and general
Katz--Oda-compatible filtered de Rham dg modules, and to combine it
with perfectness, arbitrary base change, and fibrewise
\(E_1\)-degeneration.

\subsection{Relative de Rham dg modules and base change}
\label{subsec:relative-de-rham-dg-modules}
\label{subsec:relative-de-rham-dg-module-scalar-extension}

Let \(\pi\colon\mathfrak X\to S\) be a morphism from a complex
Deligne--Mumford stack to a complex scheme.  Write
\((\Omega_{\mathfrak X/S}^\bullet,d_{\mathfrak X/S})\) for its
relative de Rham dg algebra.

\begin{definition}[Relative de Rham dg module]
\label{def:relative-de-rham-dg-module}
A \emph{relative de Rham dg module} on \(\mathfrak X/S\) is a
differential graded module \((\mathcal C^\bullet,D)\) over
\((\Omega_{\mathfrak X/S}^\bullet,d_{\mathfrak X/S})\) whose
degree-zero action is the given \(\mathcal O_{\mathfrak X}\)-module
structure.  Thus, for a local relative form \(\eta\) of degree \(r\),
\begin{equation}
 D(\eta\cdot u)
 =d_{\mathfrak X/S}\eta\cdot u+(-1)^r\eta\cdot Du.
\label{eq:relative-de-rham-dg-module-Leibniz}
\end{equation}
Morphisms are degree-zero dg-module morphisms.
\end{definition}

For a local function \(a\in\mathcal O_{\mathfrak X}\), the dg
Leibniz identity specializes to
\begin{equation}
 D(au)-aDu=d_{\mathfrak X/S}a\cdot u.
\label{eq:relative-de-rham-function-Leibniz}
\end{equation}
In particular, \(D\) is \(\pi^{-1}\mathcal O_S\)-linear.
Since the de Rham algebra is generated by functions and their
differentials, an \(\mathcal O_{\mathfrak X}\)-linear chain
map between relative de Rham dg modules automatically commutes with
the entire \(\Omega_{\mathfrak X/S}^\bullet\)-action.
Replacing \(S\) by \(\Spec\mathbb C\) gives the corresponding absolute notion.

Let \(\pi\colon\mathfrak X\to S\) be a proper flat tame family as in
\cref{def:proper-flat-tame-family}.
If \(\mathcal C^\bullet\) is a bounded relative de Rham dg module
whose terms are coherent \(\mathcal O_{\mathfrak X}\)-modules flat
over \(S\), then \eqref{eq:relative-de-rham-function-Leibniz} makes
its underlying complex a bounded \(S\)-linear complex in the sense
of \cref{def:s-linear-complex}, with coherent \(S\)-flat
terms.

Let \(g\colon T\to S\) be a finite-type morphism.
Formation of relative differentials and their
exterior powers commutes with base change, giving degreewise
identifications compatible with wedge products
\begin{equation}
 g_{\mathfrak X}^*\Omega_{\mathfrak X/S}^r
 \simeq
 \Omega_{\mathfrak X_T/T}^r
 \qquad(r\geq0).
\label{eq:relative-de-rham-dg-algebra-base-change}
\end{equation}
Thus \(Lg_{\mathrm{sc}}^*\mathcal C^\bullet\) is a
bounded relative de Rham dg module on \(\mathfrak X_T/T\) whose terms
are coherent and flat over \(T\).

\subsection{The Katz--Oda filtration and relative quotient}
\label{subsec:katz-oda-filtration-relative-quotient}
\label{subsec:absolute-relative-dg-quotient}

Let \(\pi\colon\mathfrak X\to S\) be a proper flat tame family.
From this subsection onward, assume that \(S\) is smooth over \(\mathbb C\)
and that the ordered index set
\(\Lambda\subset\mathbb Q\) is stable under translation by
nonpositive integers.

The absolute and relative de Rham dg algebras are related by
canonical dg-algebra morphisms
\begin{equation}
 \pi^{-1}(\Omega_{S/\mathbb C}^\bullet,d)
 \longrightarrow
 (\Omega_{\mathfrak X/\mathbb C}^\bullet,d)
 \longrightarrow
 (\Omega_{\mathfrak X/S}^\bullet,d_{\mathfrak X/S}).
\label{eq:absolute-relative-de-rham-dg-algebra-morphisms}
\end{equation}
Let
\(J_{\mathfrak X/S}\subset\Omega_{\mathfrak X/\mathbb C}^\bullet\)
be the dg ideal generated by the image of
\(\pi^{-1}\Omega_{S/\mathbb C}^{\geq1}\), equivalently by the image
of \(\pi^{-1}\Omega_{S/\mathbb C}^1\).  The cotangent sequence and
the universal property of relative differentials give a canonical
identification
\begin{equation}
 \Omega_{\mathfrak X/\mathbb C}^\bullet/J_{\mathfrak X/S}
 \xrightarrow{\ \sim\ }
 \Omega_{\mathfrak X/S}^\bullet
\label{eq:absolute-relative-de-rham-dg-algebra-quotient}
\end{equation}
of dg algebras.  These statements are understood on the
lisse--\'{e}tale site and may be checked on smooth charts.

\begin{lemma}[The Katz--Oda filtration and relative quotient]
\label{lem:absolute-to-relative-dg-quotient}
Let \((\mathcal A^\bullet,\mathbf D)\) be a bounded dg module over
\((\Omega_{\mathfrak X/\mathbb C}^\bullet,d)\), whose degree-zero
action is the given \(\mathcal O_{\mathfrak X}\)-module structure.
For \(r\geq0\), define
\begin{equation}
 L^r\mathcal A^\bullet
 :=\operatorname{Im}\!\left\{
  \pi^{-1}\Omega_{S/\mathbb C}^r
  \otimes_{\pi^{-1}\mathcal O_S}
  \mathcal A^\bullet[-r]
  \longrightarrow
  \mathcal A^\bullet
 \right\},
\label{eq:abstract-induced-base-degree-filtration}
\end{equation}
We call \(L\) the \emph{Katz--Oda filtration}; it records the number
of differential forms pulled back from the base.  Put
\begin{equation}
 \mathcal C^\bullet
 :=\operatorname{gr}_L^0\mathcal A^\bullet
 =\mathcal A^\bullet/L^1\mathcal A^\bullet.
\label{eq:abstract-unfiltered-relative-quotient}
\end{equation}
Suppose, for assertions \textup{(ii)}--\textup{(v)} below, that
\(\mathcal A^\bullet\) moreover carries an exhaustive decreasing
\(\Lambda\)-filtration \(F\) by subcomplexes such that the dg-module
action induces maps
\begin{equation}
 \Omega_{\mathfrak X/\mathbb C}^r
 \otimes_{\mathcal O_{\mathfrak X}}
 F^{\lambda-r}\mathcal A^\bullet[-r]
 \longrightarrow F^\lambda\mathcal A^\bullet
 \qquad(r\geq0).
\label{eq:abstract-filtered-absolute-de-rham-action}
\end{equation}
In that case, put
\[
 \mathcal A^{\lambda,\bullet}:=F^\lambda\mathcal A^\bullet,
 \qquad
 L^r\mathcal A^{\lambda,\bullet}
 :=F^\lambda\mathcal A^\bullet\cap L^r\mathcal A^\bullet,
\]
and give \(\mathcal C^\bullet\) the quotient filtration, so that
\begin{equation}
 (\mathcal C^\bullet,F)
 :=\operatorname{gr}_L^0(\mathcal A^\bullet,F),
 \qquad
 \mathcal C^{\lambda,\bullet}
 :=F^\lambda\mathcal C^\bullet
 =\operatorname{gr}_L^0\mathcal A^{\lambda,\bullet}.
\label{eq:abstract-induced-relative-system}
\end{equation}

Then the following assertions hold.
\begin{enumerate}[label=\textup{(\roman*)}]
\item
The filtration \(L\) is finite and decreasing, with
\(L^0\mathcal A^\bullet=\mathcal A^\bullet\), and every \(L^r\) is a
dg submodule.  The quotient \(\mathcal C^\bullet\) is canonically a
dg module over
\((\Omega_{\mathfrak X/S}^\bullet,d_{\mathfrak X/S})\).

\item
Every \(\mathcal A^{\lambda,\bullet}\) is an absolute de Rham dg
submodule and every \(\mathcal C^{\lambda,\bullet}\) is a relative de
Rham dg submodule.  The relative action induces maps
\begin{equation}
 \Omega_{\mathfrak X/S}^r
 \otimes_{\mathcal O_{\mathfrak X}}
 \mathcal C^{\lambda-r,\bullet}[-r]
 \longrightarrow
 \mathcal C^{\lambda,\bullet}.
\label{eq:abstract-filtered-relative-de-rham-action}
\end{equation}

\item
The filtration inclusions are absolute dg-module morphisms and induce
relative dg-module morphisms on the quotient:
\begin{equation}
 \tau_{\mu,\lambda}\colon
 \mathcal A^{\mu,\bullet}\lhook\joinrel\longrightarrow
 \mathcal A^{\lambda,\bullet},
 \qquad
 \theta_{\mu,\lambda}
 :=\operatorname{gr}_L^0(\tau_{\mu,\lambda})
 \qquad(\mu\geq\lambda).
\label{eq:abstract-filtered-level-transitions}
\end{equation}
Every \(\operatorname{gr}_L^r\mathcal A^{\lambda,\bullet}\) is
canonically a relative de Rham dg module.  Multiplication by
pulled-back base forms induces canonical relative dg-module
morphisms
\begin{equation}
 \psi_{\lambda,r}\colon
 \pi^{-1}\Omega_{S/\mathbb C}^r
 \otimes_{\pi^{-1}\mathcal O_S}
 \mathcal C^{\lambda-r,\bullet}[-r]
 \longrightarrow
 \operatorname{gr}_L^r\mathcal A^{\lambda,\bullet}
 \qquad(r\geq0).
\label{eq:abstract-canonical-base-action-to-associated-graded}
\end{equation}
For \(r=0\), this is the tautological identification.  For
\(\mu\geq\lambda\), these morphisms satisfy
\begin{equation}
 \operatorname{gr}_L^r(\tau_{\mu,\lambda})\circ\psi_{\mu,r}
 =\psi_{\lambda,r}\circ
  (\operatorname{id}\otimes\theta_{\mu-r,\lambda-r}[-r]).
\label{eq:abstract-canonical-base-action-transition-compatibility}
\end{equation}

\item
Let \(f\in\mathcal O_S\) be a local function, let
\(\eta\in\Omega_{S/\mathbb C}^r\), and let
\(v\in\mathcal C^{\lambda-r,\bullet}\).  If
\(u\in L^r\mathcal A^{\lambda,\bullet}\) represents
\(\psi_{\lambda,r}(\eta\otimes v)\), then
\([\mathbf D,\pi^*f]u\) belongs to
\(L^{r+1}\mathcal A^{\lambda,\bullet}\), and its class in the next
graded piece is
\begin{equation}
 \bigl[[\mathbf D,\pi^*f]u\bigr]
 =\psi_{\lambda,r+1}\!\left(
   df\wedge\eta\otimes
   \theta_{\lambda-r,\lambda-r-1}(v)
  \right).
\label{eq:katz-oda-base-function-action}
\end{equation}

\item
The first differential of the \(L\)-spectral sequence of
every \(\mathcal A^{\lambda,\bullet}\) obeys the de Rham Leibniz
rule.  More explicitly, for a local base \(r\)-form \(\eta\) and a
local section \([u]\) of its \(E_1\)-sheaf,
\begin{equation}
 d_1(\eta\cdot[u])
 =d\eta\cdot[u]+(-1)^r\eta\cdot d_1[u].
\label{eq:abstract-L-spectral-sequence-Leibniz}
\end{equation}
\end{enumerate}
\end{lemma}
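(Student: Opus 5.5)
The proof is a sequence of local verifications, each reducing to the graded Leibniz rule for the absolute dg module and the dg-algebra quotient \eqref{eq:absolute-relative-de-rham-dg-algebra-quotient}. All assertions are local, so I would check them on smooth charts of the lisse--\'etale site.

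\emph{Assertion (i).} By definition \(L^0\mathcal A^\bullet=\mathcal A^\bullet\), and \(L\) is decreasing because \(\Omega^{r+1}_S\) is generated by \(\Omega^1_S\wedge\Omega^r_S\). It is finite because \(\Omega^r_{S/\mathbb C}=0\) for \(r>\dim S\). To see that \(L^r\) is a dg submodule, I would compute, for \(\eta\in\pi^{-1}\Omega^r_S\) and \(u\in\mathcal A^\bullet\),
\[
 \mathbf D(\eta u)=d\eta\cdot u+(-1)^r\eta\,\mathbf Du .
\]
This lies in \(L^{r+1}+L^r\subseteq L^r\). The \(\Omega_{\mathfrak X}\)-action preserves \(L^r\) by graded commutativity. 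Hence \(L^1\mathcal A^\bullet=J_{\mathfrak X/S}\cdot\mathcal A^\bullet\), and \(\mathcal A/L^1\) is a dg module over \(\Omega^\bullet_{\mathfrak X}/J_{\mathfrak X/S}\cong\Omega^\bullet_{\mathfrak X/S}\).

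\emph{Assertions (ii) and (iii).} Hypothesis \eqref{eq:abstract-filtered-absolute-de-rham-action} with \(r=0\) makes each \(F^\lambda\) an \(\mathcal O\)-submodule. Since \(F^\lambda\) is a subcomplex, Leibniz shows that the \(\Omega_{\mathfrak X}\)-action from level \(\lambda-r\) lands in level \(\lambda\), which is the filtered compatibility. Intersections with \(L^r\) remain dg submodules. Passing to the quotient by \(L^1\) gives \eqref{eq:abstract-filtered-relative-de-rham-action}; here I use that \(\Omega^r_{\mathfrak X}\to\Omega^r_{\mathfrak X/S}\) is surjective.

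For \(\psi_{\lambda,r}\), I would send \(\eta\otimes[v]\) to the class of \(\eta\tilde v\), where \(\tilde v\in\mathcal A^{\lambda-r}\) is any lift of \(v\). By \eqref{eq:abstract-filtered-absolute-de-rham-action}, \(\eta\tilde v\) lies in \(F^\lambda\cap L^r\). Changing the lift by an element of \(L^1\mathcal A^{\lambda-r}=F^{\lambda-r}\cap L^1\) changes \(\eta\tilde v\) by an element of \(F^\lambda\cap L^{r+1}\), so the map is well defined. It is a chain map up to the sign in \([-r]\), because the \(d\eta\) term lands in \(L^{r+1}\). The transition compatibility \eqref{eq:abstract-canonical-base-action-transition-compatibility} holds because everything is defined by inclusions and the same lift. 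The relative dg-module structure on \(\operatorname{gr}^r_L\) follows as in (i), since \(J\) raises \(L\)-degree.

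\emph{Assertion (iv).} Take \(u=\eta\tilde v\). The Leibniz rule gives \([\mathbf D,\pi^*f]u=d(\pi^*f)\wedge u=df\wedge\eta\,\tilde v\). I would regard \(\tilde v\) as lying in \(F^{\lambda-r-1}\) via the inclusion \(\tau\); its class there is \(\theta_{\lambda-r,\lambda-r-1}(v)\). The element then lies in \(F^\lambda\cap L^{r+1}\) and represents \(\psi_{\lambda,r+1}(df\wedge\eta\otimes\theta(v))\).

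The delicate point is the \(F\)-bookkeeping here. The class \(\psi_{\lambda,r+1}\) requires input from level \(\lambda-r-1\), which is why \(\theta\) appears and why \(\Lambda\) must be stable under \(-1\).

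\emph{Assertion (v).} The \(E_1\)-sheaf of the \(L\)-filtration is \(\mathcal H(\operatorname{gr}_L)\), and \(d_1\) is the connecting map: lift a cocycle \([u]\) of \(\operatorname{gr}^r\) to \(u\in L^r\), then take the class of \(\mathbf Du\in L^{r+1}\). For \(\eta\cdot[u]\), choose the lift \(\eta u\). Then
\[
 \mathbf D(\eta u)=d\eta\, u+(-1)^r\eta\,\mathbf Du ,
\]
and taking classes gives \eqref{eq:abstract-L-spectral-sequence-Leibniz}. I would check independence of choices in the standard way: changing lifts alters \(\mathbf Du\) by a coboundary in \(\operatorname{gr}^{r+1}\).

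The main obstacle I expect is the well-definedness of \(\psi_{\lambda,r}\) with respect to the filtration \(F\). That \(F^{\lambda-r}\cap L^1\) maps into \(F^\lambda\cap L^{r+1}\) relies on \(L^1\mathcal A^{\lambda-r}\) being the intersection, not the image of base forms acting on \(F\). The inclusion still holds: by graded commutativity of the action, the product of \(\eta\) with an element of \(L^1\) lies in \(L^{r+1}\) no matter how that element is generated, and the product lies in \(F^\lambda\) by \eqref{eq:abstract-filtered-absolute-de-rham-action}.
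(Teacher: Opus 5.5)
Your proposal is correct and follows essentially the same route as the paper. You use graded Leibniz plus associativity for (i), lifting modulo \(J_{\mathfrak X/S}\) and checking that a change of lift in \(L^1\mathcal A^{\lambda-r,\bullet}\) moves \(\eta\tilde v\) only within \(L^{r+1}\mathcal A^{\lambda,\bullet}\) for (ii)--(iii), the identity \([\mathbf D,\pi^*f]=\) multiplication by \(d(\pi^*f)\) together with the inclusion inducing \(\theta_{\lambda-r,\lambda-r-1}\) for (iv), and Leibniz one \(L\)-step higher for (v).

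Two wording fixes. First, the shift by \(r\) in the filtered action is the standing hypothesis \eqref{eq:abstract-filtered-absolute-de-rham-action}, not a consequence of Leibniz; Leibniz plus \(\mathcal O\)-stability of the subcomplex \(F^\lambda\) only shows that \(F^\lambda\) is stable under \(\Omega_{\mathfrak X/\mathbb C}^\bullet\). Second, in (iv) you should note that an arbitrary representative \(u\) differs from \(\eta\tilde v\) by an element of \(L^{r+1}\mathcal A^{\lambda,\bullet}\), which multiplication by \(d(\pi^*f)\) sends into \(L^{r+2}\mathcal A^{\lambda,\bullet}\), so the class is unaffected.
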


\begin{proof}
For \textup{(i)}, the base de Rham algebra is generated in degree
one, so associativity of its restricted base action gives
\(L^{r+1}\mathcal A^\bullet\subseteq L^r\mathcal A^\bullet\).
For a local base \(r\)-form \(\eta\) and a local section \(u\), the
dg Leibniz identity
\begin{equation}
 \mathbf D(\eta\cdot u)
 =d\eta\cdot u+(-1)^r\eta\cdot\mathbf D u
\label{eq:abstract-filtered-dg-module-Leibniz}
\end{equation}
shows that \(L^r\mathcal A^\bullet\) is a subcomplex; graded
commutativity shows that it is a dg submodule.  The filtration is
finite because \(\Omega_{S/\mathbb C}^r=0\) for \(r>\dim S\).
Moreover,
\[
 L^1\mathcal A^\bullet
 =J_{\mathfrak X/S}\mathcal A^\bullet.
\]
Consequently \eqref{eq:absolute-relative-de-rham-dg-algebra-quotient}
makes \(\mathcal C^\bullet\) a dg module over the relative de Rham
algebra.

For \textup{(ii)}, the image of \(F^\lambda\mathcal A^\bullet\) in
\(\mathcal C^\bullet\) is
\[
 \frac{F^\lambda\mathcal A^\bullet}
 {F^\lambda\mathcal A^\bullet\cap L^1\mathcal A^\bullet}
 =\operatorname{gr}_L^0\mathcal A^{\lambda,\bullet},
\]
which proves \eqref{eq:abstract-induced-relative-system}.  To obtain
\eqref{eq:abstract-filtered-relative-de-rham-action}, lift a relative
form locally to an absolute form.  The filtration shift follows from
\eqref{eq:abstract-filtered-absolute-de-rham-action}; different lifts
differ by \(J_{\mathfrak X/S}\) and therefore act identically on the
quotient.  Since \(F^\lambda\subseteq F^{\lambda-r}\), the same
action condition shows that each \(\mathcal A^{\lambda,\bullet}\) is
an absolute dg submodule.

For \textup{(iii)}, the filtration inclusions preserve \(L\), and
their quotients are relative dg-module morphisms.  The restricted
base action maps
\[
 \pi^{-1}\Omega_{S/\mathbb C}^r
 \otimes F^{\lambda-r}\mathcal A^\bullet[-r]
 \longrightarrow L^r\mathcal A^{\lambda,\bullet}.
\]
Changing a lift from \(\mathcal C^{\lambda-r,\bullet}\) by an element
of \(L^1\mathcal A^{\lambda-r,\bullet}\) changes its image by an
element of \(L^{r+1}\mathcal A^{\lambda,\bullet}\).  The action thus
descends to \(\psi_{\lambda,r}\).  Positive-degree base forms raise
the \(L\)-degree, so they act trivially on every associated graded
piece; hence both the target and \(\psi_{\lambda,r}\) are relative.
Modulo \(L^{r+1}\), the first term on the right of
\eqref{eq:abstract-filtered-dg-module-Leibniz} vanishes, while the
second gives the differential on the shifted tensor product.  Thus
\(\psi_{\lambda,r}\) is a dg-module morphism.  Its compatibility with
the level maps follows from associativity of the action and
functoriality of the filtration inclusions.

For \textup{(iv)}, choose a lift \(\widetilde v\in
\mathcal A^{\lambda-r,\bullet}\) of \(v\).  Modulo
\(L^{r+1}\mathcal A^{\lambda,\bullet}\), the section \(u\) is
represented by \(\eta\wedge\widetilde v\), and the dg-module Leibniz
formula gives
\[
 [\mathbf D,\pi^*f](\eta\wedge\widetilde v)
 =df\wedge\eta\wedge\widetilde v.
\]
In base degree \(r+1\), the last factor is viewed through the
filtration inclusion
\(\mathcal A^{\lambda-r,\bullet}\hookrightarrow
\mathcal A^{\lambda-r-1,\bullet}\), which induces
\(\theta_{\lambda-r,\lambda-r-1}\).  This proves
\eqref{eq:katz-oda-base-function-action}.

Finally, apply \eqref{eq:abstract-filtered-dg-module-Leibniz} one
order higher in the \(L\)-filtration.  This gives
\eqref{eq:abstract-L-spectral-sequence-Leibniz} and proves
\textup{(v)}.
\end{proof}

The filtration \(L\) is the canonical filtration used by Katz and
Oda to construct the Gauss--Manin connection
\cite[\S2]{KatzOda}; it is also called the Koszul filtration.

\subsection{Katz--Oda-compatible filtered de Rham dg modules}
\label{subsec:katz-oda-compatible-filtered-de-rham-modules}

\begin{definition}[Katz--Oda compatibility and relative
\texorpdfstring{\(S\)}{S}-flatness]
\label{def:katz-oda-compatible-filtered-de-rham-dg-module}
A \emph{filtered absolute de Rham dg module} on
\(\mathfrak X/S\) is a filtered dg module
\((\mathcal A^\bullet,\mathbf D,F)\) satisfying the hypotheses of
\cref{lem:absolute-to-relative-dg-quotient}.  It is called
\emph{Katz--Oda-compatible} if every canonical morphism
\(\psi_{\lambda,r}\) in
\eqref{eq:abstract-canonical-base-action-to-associated-graded} is an
isomorphism.  In that case, write its inverse as
\begin{equation}
 \phi_{\lambda,r}:=\psi_{\lambda,r}^{-1}\colon
 \operatorname{gr}_L^r\mathcal A^{\lambda,\bullet}
 \xrightarrow{\ \sim\ }
 \pi^{-1}\Omega_{S/\mathbb C}^r
 \otimes_{\pi^{-1}\mathcal O_S}
 \mathcal C^{\lambda-r,\bullet}[-r]
 \qquad(r\geq0).
\label{eq:katz-oda-associated-graded}
\end{equation}
For \(r=0\), this is the tautological identification in
\eqref{eq:abstract-induced-relative-system}.

The module is called \emph{relatively \(S\)-flat} if, for every
\(\lambda\in\Lambda\) and every \(k\), the induced relative term
\(\mathcal C^{\lambda,k}\) is a coherent
\(\mathcal O_{\mathfrak X}\)-module flat over \(S\).
\end{definition}

By boundedness of \(\mathcal A^\bullet\) and
\cref{lem:absolute-to-relative-dg-quotient}, relative \(S\)-flatness
implies that, after forgetting the action of positive-degree relative
forms, the induced data
\(\{\mathcal C^{\lambda,\bullet},\theta_{\mu,\lambda}\}\) form an
indexed system of bounded \(S\)-linear complexes with coherent
\(S\)-flat terms in the sense of
\cref{def:indexed-s-linear-complexes}.

\begin{remark}[Filtered \(D\)-modules as a source of examples]
\label{rem:abstract-filtered-d-module-origin}
If \(\Lambda=\mathbb Z\) and \((\mathcal M,G_\bullet)\) is a good
filtered left
\(\mathscr D_{\mathfrak X}\)-module, then its filtered de Rham complex
\[
 \mathcal A^\bullet
 :=\DR_{\mathfrak X/\mathbb C}^{\circ}(\mathcal M),
 \qquad
 (F^\lambda\mathcal A)^m
 :=\Omega_{\mathfrak X/\mathbb C}^m
   \otimes G_{m-\lambda}\mathcal M
\]
is Katz--Oda-compatible.  It is relatively \(S\)-flat whenever its
induced relative levels have the required coherence and flatness.
Thus Katz--Oda compatibility is the standard filtered de Rham
formalism, whereas relative \(S\)-flatness is the additional family
condition used below.  This is a source of the definition, not an
input to the stacky application; see
\cite[\S\S1.1--1.3]{SaitoLogDirectImage} and \cite[\S2]{KatzOda}.
\end{remark}

\subsection{Katz--Oda operators and the limiting connection}
\label{subsec:katz-oda-limiting-connection}

\begin{proposition}[Shifted Katz--Oda operators and the limiting
connection]
\label{prop:abstract-shifted-katz-oda}
For a relatively \(S\)-flat, Katz--Oda-compatible filtered absolute
de Rham dg module, put
\begin{equation}
 K_\lambda:=R\pi_{*,S}\mathcal C^{\lambda,\bullet},
 \qquad
 K_{-\infty}:=
 \operatorname*{hocolim}_{\lambda\to-\infty}K_\lambda.
\label{eq:katz-oda-level-and-limiting-object}
\end{equation}
Write \(\jmath_\lambda\colon K_\lambda\to K_{-\infty}\) for the
cocone morphisms.  For every \(q\), the following hold.
\begin{enumerate}[label=\textup{(\roman*)}]
\item
For every \(\lambda\), there is a canonical additive operator
\begin{equation}
 \nabla_\lambda^q\colon
 \mathcal H^q(K_\lambda)
 \longrightarrow
 \Omega_{S/\mathbb C}^1\otimes
 \mathcal H^q(K_{\lambda-1}).
\label{eq:abstract-shifted-katz-oda-operator}
\end{equation}
The homotopy colimit carries a canonical algebraic integrable
connection
\begin{equation}
 \nabla_{-\infty}^q\colon
 \mathcal H^q(K_{-\infty})
 \longrightarrow
 \Omega_{S/\mathbb C}^1\otimes
 \mathcal H^q(K_{-\infty}).
\label{eq:katz-oda-limiting-connection}
\end{equation}
For \(f\in\mathcal O_S\), a local section
\(v_\lambda\in\mathcal H^q(K_\lambda)\), and a local section
\(v_{-\infty}\in\mathcal H^q(K_{-\infty})\), they satisfy
\begin{align}
 \nabla_\lambda^q(fv_\lambda)
 &=f\nabla_\lambda^q(v_\lambda)
   +df\otimes
    \mathcal H^q(\theta_{\lambda,\lambda-1})(v_\lambda),
 \label{eq:abstract-shifted-katz-oda-Leibniz}\\
 \nabla_{-\infty}^q(fv_{-\infty})
 &=f\nabla_{-\infty}^q(v_{-\infty})+df\otimes v_{-\infty}.
 \label{eq:katz-oda-limiting-Leibniz}
\end{align}
More precisely, the first differential of the \(L\)-spectral sequence
at level \(\lambda\) is
\begin{equation}
 d_{1,\lambda}(\eta\otimes v)
 =d\eta\otimes
  \mathcal H^q(\theta_{\lambda-r,\lambda-r-1})(v)
  +(-1)^r\eta\wedge\nabla_{\lambda-r}^q(v),
\label{eq:abstract-shifted-L-spectral-sequence-differential}
\end{equation}
for \(\eta\in\Omega_{S/\mathbb C}^r\) and
\(v\in\mathcal H^q(K_{\lambda-r})\).  Taking the homotopy colimit over
the directed system of levels identifies these first differentials with
the de Rham differential determined by \(\nabla_{-\infty}^q\), whose
square is zero.

\item
If \(\mu\geq\lambda\), then
the following squares commute:
\begin{equation}
\begin{tikzcd}
	{\mathcal H^q(K_\mu)} & {\Omega_{S/\mathbb C}^1\otimes\mathcal H^q(K_{\mu-1})} \\
	{\mathcal H^q(K_\lambda)} & {\Omega_{S/\mathbb C}^1\otimes\mathcal H^q(K_{\lambda-1})} \\
	{\mathcal H^q(K_{-\infty})} & {\Omega_{S/\mathbb C}^1\otimes\mathcal H^q(K_{-\infty}).}
	\arrow["{\nabla_\mu^q}", from=1-1, to=1-2]
		\arrow["{\mathcal H^q(\theta_{\mu,\lambda})}"', from=1-1, to=2-1]
	\arrow["{\operatorname{id}\otimes                 \mathcal H^q(\theta_{\mu-1,\lambda-1})}", from=1-2, to=2-2]
	\arrow["{\nabla_\lambda^q}", from=2-1, to=2-2]
	\arrow["{\mathcal H^q(\jmath_\lambda)}"', from=2-1, to=3-1]
	\arrow["{\operatorname{id}\otimes
	                 \mathcal H^q(\jmath_{\lambda-1})}", from=2-2, to=3-2]
	\arrow["{\nabla_{-\infty}^q}"', from=3-1, to=3-2]
\end{tikzcd}
\label{eq:abstract-shifted-katz-oda-compatibility}
\end{equation}

\item
If there is \(\lambda_0\in\Lambda\) such that
\(K_\lambda\to K_{\lambda-1}\) is an isomorphism for every
\(\lambda\leq\lambda_0\), then the cocone maps are isomorphisms for
all sufficiently low \(\lambda\), and \(K_{-\infty}\) is perfect.
\end{enumerate}
\end{proposition}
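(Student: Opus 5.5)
The plan is the classical Katz--Oda construction, run levelwise. We exploit the shift built into the Katz--Oda identification \eqref{eq:katz-oda-associated-graded}, and only at the end pass to the homotopy colimit.

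\emph{Construction of \(\nabla_\lambda^q\) and the formula for \(d_{1,\lambda}\).} Fix \(\lambda\) and consider the two-step quotient
\[
 0\to\operatorname{gr}_L^1\mathcal A^{\lambda,\bullet}\to L^0\mathcal A^{\lambda,\bullet}/L^2\mathcal A^{\lambda,\bullet}\to\operatorname{gr}_L^0\mathcal A^{\lambda,\bullet}\to0 .
\]
Apply \(\phi_{\lambda,1}\) to identify the left term with \(\pi^{-1}\Omega_S^1\otimes\mathcal C^{\lambda-1,\bullet}[-1]\). Then apply \(R\pi_{*,S}\). The connecting morphism of the resulting triangle, in degree \(q\), together with the projection formula \(R\pi_{*,S}(\pi^{-1}\Omega^1_S\otimes\mathcal C^{\lambda-1,\bullet})\simeq\Omega_S^1\otimes K_{\lambda-1}\) (valid since \(\Omega^1_S\) is locally free), defines \(\nabla_\lambda^q\). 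This is exactly the \(d_1\) of the \(L\)-spectral sequence from \(E_1^{0,q}\) to \(E_1^{1,q}\). More generally, \(\phi_{\lambda,r}\) identifies \(E_1^{r,q}\simeq\Omega_S^r\otimes\mathcal H^q(K_{\lambda-r})\).

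Additivity is clear. For the Leibniz rule \eqref{eq:abstract-shifted-katz-oda-Leibniz}, note that multiplication by \(\pi^*f\) commutes with the connecting map only up to the commutator \([\mathbf D,\pi^*f]\). By \cref{lem:absolute-to-relative-dg-quotient}(iv) with \(r=0\), this commutator is computed by \(df\otimes\theta_{\lambda,\lambda-1}\). Formula \eqref{eq:abstract-shifted-L-spectral-sequence-differential} for general \(r\) follows from the Leibniz rule \eqref{eq:abstract-L-spectral-sequence-Leibniz} in \cref{lem:absolute-to-relative-dg-quotient}(v). Write a class in \(E_1^{r,q}\) as \(\eta\cdot[v]\); the term \(d\eta\) acquires the transition \(\theta_{\lambda-r,\lambda-r-1}\) by the same computation as in (iv).

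\emph{Compatibility, colimit, and integrability.} The squares in (ii) between levels \(\mu\geq\lambda\) commute by naturality of connecting morphisms. One uses the fact that the inclusions \(\tau_{\mu,\lambda}\) preserve \(L\) and the compatibility \eqref{eq:abstract-canonical-base-action-transition-compatibility} of \(\psi\) with transitions. Since cohomology sheaves commute with filtered homotopy colimits and \(\Omega^1_S\otimes-\) is exact, we have \(\mathcal H^q(K_{-\infty})=\varinjlim_\lambda\mathcal H^q(K_\lambda)\). The compatible system of \(\nabla_\lambda^q\) therefore induces \(\nabla_{-\infty}^q\), which makes the lower square commute. In the colimit the transitions \(\theta\) become the identity, so \eqref{eq:abstract-shifted-katz-oda-Leibniz} becomes the genuine Leibniz rule \eqref{eq:katz-oda-limiting-Leibniz}.

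For integrability, take the colimit of the rows \(E_1^{\bullet,q}\) of the \(L\)-spectral sequences. By \eqref{eq:abstract-shifted-L-spectral-sequence-differential}, this colimit is the de Rham complex \(\Omega_S^\bullet\otimes\mathcal H^q(K_{-\infty})\) with the differential determined by \(\nabla_{-\infty}^q\). Since \(d_1^2=0\) levelwise, the square of this differential vanishes, so the curvature is zero. The colimit is taken over the directed system indexed by \(\lambda\), and the system is stable under \(\lambda\mapsto\lambda-1\) because \(\Lambda\) is stable under translation by nonpositive integers. Hence this passage is legitimate.

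\emph{Part (iii) and the main obstacle.} Part (iii) is formal. Choose the cofinal sequence \(\lambda_0-n\) for \(n\geq0\). The transition maps along it are isomorphisms, so the homotopy colimit is represented by \(K_{\lambda_0}\). This object is perfect by \cref{thm:perfect-direct-images-s-linear-complexes}, in the same manner as \cref{lem:finite-support-s-linear-system}. For intermediate \(\lambda\leq\lambda_0\) off this sequence, pass through a compatible map of cofinal systems.

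The main technical obstacle is the sign-careful, well-defined identification of \(d_1\) with \eqref{eq:abstract-shifted-L-spectral-sequence-differential}. This includes checking that the connecting morphism of derived direct images agrees with the spectral-sequence \(d_1\) under the isomorphisms \(\phi_{\lambda,r}\), and that the \(\theta\)-twist appears exactly on the \(d\eta\) term. I would verify this on sheaf level, before taking \(R\pi_{*,S}\), using a Čech or Godement resolution compatible with the dg action.
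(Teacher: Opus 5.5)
Your proposal is correct and follows essentially the same route as the paper. The connecting morphism of the first two \(L\)-graded pieces, identified via \(\phi_{\lambda,1}\) and the projection formula, gives \(\nabla_\lambda^q\). The commutator \([\mathbf D,\pi^*f]\) together with \cref{lem:absolute-to-relative-dg-quotient}(iv) gives the twisted Leibniz rule. Naturality of connecting maps gives the squares, exactness of filtered colimits gives \(\nabla_{-\infty}^q\), and \(d_1^2=0\) for the \(L\)-spectral sequences (via part (v) and cofinality of translation by \(-r\)) gives integrability.

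One small precision. Since \(\mathbf D\) is not \(\pi^{-1}\mathcal O_S\)-linear modulo \(L^2\), the middle term \(\mathcal A^{\lambda,\bullet}/L^2\mathcal A^{\lambda,\bullet}\) is only a complex of sheaves of \(\mathbb C\)-vector spaces, as the paper states. The direct image and its connecting morphism must therefore be taken there, not as a triangle under \(R\pi_{*,S}\) in \(D(\mathcal O_S)\). This is exactly why \(\nabla_\lambda^q\) is merely additive.
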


\begin{proof}
The first two graded pieces give a canonical short exact sequence of
complexes of sheaves of \(\mathbb C\)-vector spaces
\begin{equation}
 \begin{split}
 0\longrightarrow{}&
 \pi^{-1}\Omega_{S/\mathbb C}^1
 \otimes_{\pi^{-1}\mathcal O_S}
 \mathcal C^{\lambda-1,\bullet}[-1]
 \longrightarrow
 \mathcal A^{\lambda,\bullet}/L^2\mathcal A^{\lambda,\bullet}
 \\
 &\longrightarrow
 \mathcal C^{\lambda,\bullet}
 \longrightarrow0.
 \end{split}
\label{eq:katz-oda-first-base-degree-extension}
\end{equation}
Apply derived direct image to this sequence.  The projection formula
and local freeness of \(\Omega_{S/\mathbb C}^1\) identify the shifted
left term with
\(\Omega_{S/\mathbb C}^1\otimes K_{\lambda-1}[-1]\).  The connecting
morphism therefore has cohomological degree zero and gives
\eqref{eq:abstract-shifted-katz-oda-operator}.

For a local lift of a cohomology class, the boundary of its product
with \(f\) differs from \(f\) times its boundary by the class of
\([\mathbf D,\pi^*f]\).  After applying \(\phi_{\lambda,1}\), the
 base-action formula \eqref{eq:katz-oda-base-function-action}
identifies this class with
\(df\otimes\mathcal H^q(\theta_{\lambda,\lambda-1})(v)\), proving
\eqref{eq:abstract-shifted-katz-oda-Leibniz}.  The filtered level
inclusions and
\eqref{eq:abstract-canonical-base-action-transition-compatibility}
give a morphism between the displayed short exact sequences for
\(\mu\) and \(\lambda\).  Functoriality of connecting morphisms proves
the upper square of
\eqref{eq:abstract-shifted-katz-oda-compatibility}.

Filtered colimits are exact in the category of
\(\mathcal O_S\)-modules.  Hence
\begin{equation}
 \mathcal H^q(K_{-\infty})
 \simeq
 \operatorname*{colim}_{\lambda\to-\infty}
 \mathcal H^q(K_\lambda).
\label{eq:katz-oda-limiting-cohomology-colimit}
\end{equation}
Since \(\Omega_{S/\mathbb C}^1\) is locally free, tensoring with it
commutes with this colimit.  Naturality expressed by the upper square
of \eqref{eq:abstract-shifted-katz-oda-compatibility} therefore makes the
operators \(\nabla_\lambda^q\) descend uniquely to
\(\nabla_{-\infty}^q\), and gives the lower square of
\eqref{eq:abstract-shifted-katz-oda-compatibility}.  In the
colimit, the classes of
\(v\) and \(\mathcal H^q(\theta_{\lambda,\lambda-1})(v)\) agree.
Thus \eqref{eq:abstract-shifted-katz-oda-Leibniz} descends to
\eqref{eq:katz-oda-limiting-Leibniz}, proving that the resulting
operator is an algebraic connection.

For integrability, apply derived direct image to the full
\(L\)-filtration at each level \(\lambda\).  The projection formula
and \eqref{eq:katz-oda-associated-graded} identify its
\(r\)-th graded direct image with
\[
 \Omega_{S/\mathbb C}^r\otimes K_{\lambda-r}[-r].
\]
The column-zero first differential is
\(\nabla_{\lambda-r}^q\).  Consequently
\eqref{eq:abstract-L-spectral-sequence-Leibniz} gives
\eqref{eq:abstract-shifted-L-spectral-sequence-differential}; the
transition in its first term is forced by the target
\(\Omega_{S/\mathbb C}^{r+1}\otimes
\mathcal H^q(K_{\lambda-r-1})\).

Now take the homotopy colimit over the directed system of levels.
Translation by \(-r\) is cofinal in the indexing system, so
\[
 \operatorname*{hocolim}_{\lambda\to-\infty}K_{\lambda-r}
 \simeq K_{-\infty}
 \qquad(r\geq0).
\]
Using \eqref{eq:katz-oda-limiting-cohomology-colimit}, the first
differentials therefore induce
\[
 d_1(\eta\otimes v)
 =d\eta\otimes v+(-1)^r\eta\wedge\nabla_{-\infty}^q(v)
\]
on
\(\Omega_{S/\mathbb C}^\bullet\otimes
\mathcal H^q(K_{-\infty})\).  Their squares vanish because they are
first differentials of spectral sequences.  In base degree zero this
says precisely that the curvature of \(\nabla_{-\infty}^q\) vanishes.

Finally, suppose that the transition maps are isomorphisms below
\(\lambda_0\).  Then every sufficiently low cocone morphism
\(K_\lambda\to K_{-\infty}\) is an isomorphism.  The underlying
relative data form an indexed system of bounded \(S\)-linear
complexes with coherent \(S\)-flat terms by
\cref{def:katz-oda-compatible-filtered-de-rham-dg-module}, so
\cref{thm:perfect-direct-images-s-linear-complexes} makes
every \(K_\lambda\) perfect.  Identifying \(K_{-\infty}\) with any
sufficiently low \(K_\lambda\) proves its perfectness.
\end{proof}

\subsection{Finite-support systems and cohomological exactness}
\label{subsec:katz-oda-perfectness-exactness}

\begin{theorem}[Cohomological exactness for
Katz--Oda-compatible de Rham dg modules]
\label{thm:katz-oda-perfectness-exactness}
Let \((\mathcal A^\bullet,\mathbf D,F)\) be a relatively
\(S\)-flat, Katz--Oda-compatible filtered absolute de Rham dg module
on \(\mathfrak X/S\), and assume that \(S\) is connected.  Choose a
strictly
increasing sequence
\[
 \cdots<\lambda_{i-1}<\lambda_i<\lambda_{i+1}<\cdots
 \qquad(i\in\mathbb Z)
\]
in \(\Lambda\) which is cofinal toward \(-\infty\) and whose image in
\(\Lambda\) is stable under \(\lambda\mapsto\lambda-1\).  Form
\(\mathcal R_i\), \(\mathcal G_i\), and
\(\mathcal R_{-\infty}\) as in
\cref{lem:finite-support-s-linear-system}.  Cofinality identifies
\(\mathcal R_{-\infty}\) canonically with \(K_{-\infty}\) in
\eqref{eq:katz-oda-level-and-limiting-object}.

Assume:
\begin{enumerate}[label=\textup{(\roman*)}]
\item \(\mathcal G_i\) is acyclic for all but finitely many
  \(i\in\mathbb Z\);
\item for every geometric point \(s\to S\), every \(i\in\mathbb Z\),
  and every \(b\), the transition map
  \begin{equation}
   H^b(Li_s^*\mathcal R_{i+1})
   \longrightarrow
   H^b(Li_s^*\mathcal R_i)
  \label{eq:katz-oda-fibre-injectivity}
  \end{equation}
  is injective.
\end{enumerate}
Then the following conclusions hold.
\begin{enumerate}[label=\textup{(\alph*)}]
\item Every \(\mathcal H^b(\mathcal R_{-\infty})\),
  \(\mathcal H^b(\mathcal R_i)\), and
  \(\mathcal H^b(\mathcal G_i)\) is finite locally free.  Under the
  derived base-change identifications of
  \cref{lem:finite-support-s-linear-system}, formation of these
  cohomology sheaves commutes with arbitrary finite-type base change.
  Moreover,
  \(\mathcal H^b(\mathcal R_{-\infty})\) carries the canonical
  algebraic integrable connection \(\nabla_{-\infty}^b\) of
  \cref{prop:abstract-shifted-katz-oda}.

\item The transition triangles induce short exact sequences of vector
  bundles
  \begin{equation}
   0
   \longrightarrow\mathcal H^b(\mathcal R_{i+1})
   \longrightarrow\mathcal H^b(\mathcal R_i)
   \longrightarrow\mathcal H^b(\mathcal G_i)
   \longrightarrow0.
  \label{eq:katz-oda-short-exact}
  \end{equation}

\item Since \(\mathcal G_i\simeq0\) for all sufficiently large
  \(i\), the resulting transition isomorphisms define an eventual
  finite locally free high-index value
  \[
   \mathcal H_{+\infty}^b
   :=\mathcal H^b(\mathcal R_i),
   \qquad i\gg0,
  \]
  whose formation commutes with arbitrary finite-type base change.
  Every cocone morphism induces an inclusion
  \[
   \mathcal H^b(\mathcal R_i)
   \lhook\joinrel\longrightarrow
   \mathcal H^b(\mathcal R_{-\infty})
  \]
  of vector bundles with locally free quotient.  In particular, if
  \begin{equation}
   \mathcal I_{\lambda_i}^b
   :=\operatorname{Im}\!\left\{
    \mathcal H^b(\mathcal R_i)
    \longrightarrow
    \mathcal H^b(\mathcal R_{-\infty})
   \right\},
  \label{eq:abstract-filtered-image}
  \end{equation}
  then \(\mathcal I_{\lambda_i}^b\) is a subbundle with locally free
  quotient and
  \begin{equation}
   \nabla_{-\infty}^b(\mathcal I_{\lambda_i}^b)
   \subseteq
   \Omega_{S/\mathbb C}^1\otimes
   \mathcal I_{\lambda_i-1}^b.
  \label{eq:abstract-griffiths-transversality}
  \end{equation}
  Stability of the chosen sequence under translation by \(-1\) makes
  \(\mathcal I_{\lambda_i-1}^b\) one of the subbundles just defined.

\item All limiting-object, level, and transition-cone fibre dimensions are
  constant on \(S\).  At every geometric point \(s\to S\), there are
  canonical identifications
  \begin{align}
   \mathcal H^b(\mathcal R_i)\otimes\kappa(s)
   &\simeq
   H^b(Li_s^*\mathcal R_i)
   \simeq
   \mathbb H^b\!\left(
    \mathfrak X_s,
    Li_{s,\mathrm{sc}}^*\mathcal C^{\lambda_i,\bullet}
   \right),
  \label{eq:katz-oda-level-fibre}\\
   \mathcal H^b(\mathcal G_i)\otimes\kappa(s)
   &\simeq
   \operatorname{Coker}\!\left\{
    \mathbb H^b\!\left(
     \mathfrak X_s,
     Li_{s,\mathrm{sc}}^*\mathcal C^{\lambda_{i+1},\bullet}
    \right)
    \longrightarrow
    \mathbb H^b\!\left(
     \mathfrak X_s,
     Li_{s,\mathrm{sc}}^*\mathcal C^{\lambda_i,\bullet}
    \right)
   \right\}.
  \label{eq:katz-oda-cone-fibre}
  \end{align}
\end{enumerate}
\end{theorem}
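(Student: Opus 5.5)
The plan is to first make the total cohomology locally free, then propagate local freeness down the finitely many nontrivial levels using fibrewise injectivity and constancy of ranks.

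\emph{Step 1 (limiting object).} By \cref{lem:finite-support-s-linear-system} and assumption (i), every \(\mathcal R_i\), \(\mathcal G_i\) and \(\mathcal R_{-\infty}\) is perfect, \(\mathcal R_i\simeq\mathcal R_{-\infty}\) for \(i\ll0\), and all of them commute with derived base change. \Cref{prop:abstract-shifted-katz-oda} equips each \(\mathcal H^b(\mathcal R_{-\infty})\) with an algebraic integrable connection. A coherent \(\mathcal O_S\)-module with connection on the smooth variety \(S\) is locally free. This gives local freeness of the cohomology sheaves, but not yet of the derived fibre dimensions. To obtain those, I would argue by descending induction on \(b\), using that \(\mathcal R_{-\infty}\) is perfect of bounded amplitude. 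If all \(\mathcal H^{b'}\) with \(b'>b\) are locally free, then the top cohomology commutes with base change, and by induction each \(H^b(Li_s^*\mathcal R_{-\infty})\simeq\mathcal H^b\otimes\kappa(s)\). This is the standard Tor-spectral-sequence argument: local freeness of all cohomology sheaves of a perfect complex implies that it splits locally as \(\bigoplus\mathcal H^b[-b]\). Consequently \(h^b_{\mathcal R_{-\infty}}\) is locally constant, hence constant on connected \(S\).

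\emph{Step 2 (levels).} Fibrewise, hypothesis (ii) makes every \(H^b(Li_s^*\mathcal R_{i+1})\to H^b(Li_s^*\mathcal R_i)\) injective. The long exact sequence of the fibre triangle then splits into short exact sequences
\[0\to H^b(Li_s^*\mathcal R_{i+1})\to H^b(Li_s^*\mathcal R_i)\to H^b(Li_s^*\mathcal G_i)\to0.\]
Hence \(h^b_{\mathcal R_i}=h^b_{\mathcal R_{i+1}}+h^b_{\mathcal G_i}\).

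For \(i\ll0\), \(h^b_{\mathcal R_i}=h^b_{\mathcal R_{-\infty}}\) is constant. For \(i\gg0\) all \(\mathcal G_j\) vanish. All these functions are upper semicontinuous by \cref{lem:part-iii-constant-perfect-fibre-cohomology}. Summing the identity over the finitely many nonzero cones expresses the constant function \(h^b_{\mathcal R_{-\infty}}\) as \(h^b_{\mathcal R_{i_+}}+\sum_i h^b_{\mathcal G_i}\), a finite sum of upper semicontinuous functions. This is the key point: a constant function that is a sum of upper semicontinuous nonnegative integer-valued functions forces each summand to be locally constant. Indeed, at a specialization each summand can only jump up, and the total cannot change, so none jumps. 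Hence every \(h^b_{\mathcal R_i}\) and \(h^b_{\mathcal G_i}\) is constant on connected \(S\).

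\emph{Step 3 (conclusions).} \Cref{lem:part-iii-constant-perfect-fibre-cohomology} applies since \(S\) is reduced. It gives local freeness of all \(\mathcal H^b(\mathcal R_i)\) and \(\mathcal H^b(\mathcal G_i)\), arbitrary base change of these sheaves, and the fibre formulas of (d), using \eqref{eq:katz-oda-derived-level-fibre} and \eqref{eq:katz-oda-derived-cone-fibre} together with the fibrewise short exact sequence to identify \(H^b(Li_s^*\mathcal G_i)\) with the cokernel.

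The long exact sequence of sheaves for \(\mathcal R_{i+1}\to\mathcal R_i\to\mathcal G_i\) is compatible with fibres via base change. A map of vector bundles that is injective on every fibre is a subbundle inclusion with locally free cokernel. The connecting maps therefore vanish, since on fibres the connecting maps vanish and everything commutes with base change. This gives (b). Composing finitely many such inclusions up to \(i\ll0\) gives the inclusions into \(\mathcal H^b(\mathcal R_{-\infty})\) in (c), and the eventual value \(\mathcal H^b_{+\infty}\) comes from \(\mathcal G_i\simeq0\) for \(i\gg0\).

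Griffiths transversality \eqref{eq:abstract-griffiths-transversality} follows from the lower square of \eqref{eq:abstract-shifted-katz-oda-compatibility}: \(\nabla_{-\infty}\circ\mathcal H(\jmath_{\lambda_i})=(\operatorname{id}\otimes\mathcal H(\jmath_{\lambda_i-1}))\circ\nabla_{\lambda_i}\), and \(\lambda_i-1\) is in the sequence by the stability hypothesis.

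The main obstacle is the semicontinuity bookkeeping in Step 2: one must check that it applies to the derived fibre dimensions and not merely to the sheaf ranks. I would handle Step 1's passage from locally free \(\mathcal H^b\) to locally constant fibre dimensions via the descending induction described there.
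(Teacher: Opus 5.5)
Your proposal is correct and follows essentially the same route as the paper. The paper also uses the Katz--Oda connection and Katz's local-freeness theorem to make \(\mathcal H^b(\mathcal R_{-\infty})\) locally free, then uses the hyper-\(\operatorname{Tor}\) degeneration (your local splitting) to get constant fibre dimensions. From there it likewise telescopes the fibrewise short exact sequences and combines upper semicontinuity with constancy of the sum, applies the perfect-complex criterion, kills the connecting maps because they vanish fibrewise, and reads off transversality from the lower compatibility square.
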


\begin{proof}
After forgetting the action of positive-degree relative forms, the
induced relative system is an indexed system of bounded \(S\)-linear
complexes with coherent \(S\)-flat terms by
\cref{def:katz-oda-compatible-filtered-de-rham-dg-module}.
Hypothesis \textup{(i)} therefore makes
\cref{lem:finite-support-s-linear-system} applicable.  Choose
\(i_-\leq i_+\) such that \(\mathcal G_i\simeq0\) outside
\([i_-,i_+]\) and such that
\(\jmath_{i_-}\colon\mathcal R_{i_-}\to
\mathcal R_{-\infty}\) is an isomorphism.

By cofinality of the chosen sequence,
\cref{prop:abstract-shifted-katz-oda} gives every
\(\mathcal H^b(\mathcal R_{-\infty})\) its canonical algebraic
integrable connection.  Perfectness makes these cohomology sheaves
coherent, so the classical local-freeness theorem for coherent
integrable connections
\cite[Proposition~8.8]{KatzNilpotentConnections} makes them finite
locally free.  Transport through
\eqref{eq:katz-oda-low-representative} gives the same
conclusion for \(\mathcal H^b(\mathcal R_{i_-})\).
Therefore,
for any finite-type \(g\colon T\to S\),
the hyper-\(\operatorname{Tor}\) spectral sequence
\[
 E_2^{-p,q}
 =\mathcal{T}\!or_p^{g^{-1}\mathcal O_S}
 \left(g^{-1}\mathcal H^q(\mathcal R_{i_-}),\mathcal O_T\right)
 \Longrightarrow
 \mathcal H^{q-p}(Lg^*\mathcal R_{i_-})
\]
degenerates and yields canonical isomorphisms
\[
g^*\mathcal H^q(\mathcal R_{i_-})
\xrightarrow{\sim}
\mathcal H^q(Lg^*\mathcal R_{i_-}).
\]
In particular, at any geometric point \(i_s\colon s\to S\),
\[
 \dim_{\kappa(s)}H^q(Li_s^*\mathcal R_{i_-})
 =
 \dim_{\kappa(s)}i_s^*\mathcal H^q(\mathcal R_{i_-})
 =
 \dim_{\kappa(s)}i_s^*\mathcal H^q(\mathcal R_{-\infty}).
\]
These dimensions are therefore the ranks of the vector bundles
\(\mathcal H^q(\mathcal R_{-\infty})\), hence are constant because
\(S\) is connected.

Hypothesis \textup{(ii)} in degrees \(b\) and \(b+1\) breaks the
cohomology long exact sequence of every fibrewise transition triangle
into
\begin{equation}
 0\longrightarrow
 H^b(Li_s^*\mathcal R_{i+1})
 \longrightarrow H^b(Li_s^*\mathcal R_i)
 \longrightarrow H^b(Li_s^*\mathcal G_i)
 \longrightarrow0.
\label{eq:katz-oda-fibre-short-exact}
\end{equation}
Put
\[
 \beta_{i,b}(s):=\dim_{\kappa(s)}H^b(Li_s^*\mathcal G_i),
 \qquad
 \gamma_b(s):=\dim_{\kappa(s)}H^b(Li_s^*\mathcal R_{i_++1}).
\]
Telescoping the dimensions in
\eqref{eq:katz-oda-fibre-short-exact} gives
\begin{equation}
 \gamma_b(s)+\sum_{i=i_-}^{i_+}\beta_{i,b}(s)
 =\dim_{\kappa(s)}H^b(Li_s^*\mathcal R_{i_-}).
\label{eq:katz-oda-semicontinuity-sum}
\end{equation}
The right-hand side is constant.  The finitely many functions on the
left are upper semicontinuous by
\cref{lem:part-iii-constant-perfect-fibre-cohomology}.  Each is also
the constant right-hand side minus a sum of upper semicontinuous
functions, hence is lower semicontinuous.  Therefore they are locally
constant, hence
constant because \(S\) is connected.

Apply \cref{lem:part-iii-constant-perfect-fibre-cohomology} to
\(\mathcal R_{i_++1}\) and to every nonzero \(\mathcal G_i\).  Their
cohomology sheaves are finite locally free and commute with arbitrary
cohomology base change.  Moreover,
\[
 \dim_{\kappa(s)}H^b(Li_s^*\mathcal R_i)
 =\gamma_b(s)+\sum_{j=i}^{i_+}\beta_{j,b}(s)
 \qquad(i_-\leq i\leq i_++1),
\]
so the same perfect-complex criterion applies to every intervening
\(\mathcal R_i\).  The transition isomorphisms outside
\([i_-,i_+]\) extend these
conclusions to all remaining indices and define
\(\mathcal H_{+\infty}^b\).  This proves the local-freeness,
cohomology-base-change, and constancy assertions in \textup{(a)},
\textup{(c)}, and \textup{(d)}.

The connecting morphism
\[
 \mathcal H^b(\mathcal G_i)
 \longrightarrow\mathcal H^{b+1}(\mathcal R_{i+1})
\]
is a morphism of vector bundles compatible with base change.  Its
restriction to every geometric fibre is zero by
\eqref{eq:katz-oda-fibre-short-exact}, so it is zero.
The cohomology sequence of the transition triangle therefore breaks
into \eqref{eq:katz-oda-short-exact}.  For
\(i\geq i_-\), the composite from \(\mathcal R_i\) to
\(\mathcal R_{i_-}\), followed by \(\jmath_{i_-}\), consequently
induces an inclusion of vector bundles.  Its quotient has a finite
filtration with successive quotients among the
\(\mathcal H^b(\mathcal G_j)\), and is locally free.  For \(i<i_-\),
the cocone morphism is an isomorphism.  This proves \textup{(b)} and
the subbundle assertions in \textup{(c)}.

Combining ordinary cohomology base change with the derived geometric
fibre formula gives
\eqref{eq:katz-oda-level-fibre}; the
fibrewise short exact sequence gives
\eqref{eq:katz-oda-cone-fibre}.  Finally,
the lower square of
\eqref{eq:abstract-shifted-katz-oda-compatibility} sends the image of
the \(\lambda_i\)-level into the image of the
\((\lambda_i-1)\)-level.  This proves
\eqref{eq:abstract-griffiths-transversality} and completes the proof.
\end{proof}
 \section{Families with Fixed Pole Orders and Irregular Hodge Bundles}
\label{sec:families-fixed-pole-orders}
\label{sec:perfect-direct-images-base-change-katz-oda}
\label{sec:constancy-irregular-hodge-numbers}

This section applies the preceding dg-module formalism to a smooth
family of rational Landau--Ginzburg compactifications.  The primary
object is a single filtered \emph{absolute} logarithmic complex.  Its
degree-zero quotient for the Katz--Oda filtration produces the
relative Yu-filtered dg module.

\subsection{Smooth families with fixed pole orders and Yu complexes}
\label{subsec:fixed-pole-orders-yu-complexes}
\label{subsec:fixed-pole-absolute-yu-dg-module}

\begin{definition}[Smooth family with fixed pole orders]
\label{ass:fixed-pole-orders-dm-family}
\label{ass:fixed-pole-dm-family}
A \emph{smooth compactified Deligne--Mumford Landau--Ginzburg family
with fixed pole orders} consists of the following data and conditions.
\begin{enumerate}[label=\textup{(\roman*)}]
\item The base \(S\) is a smooth connected finite-type complex
  scheme, and
  \[
   \pi\colon\mathfrak X\longrightarrow S
  \]
  is a proper smooth morphism of pure relative dimension \(n\), where
  \(\mathfrak X\) is a separated tame Deligne--Mumford stack.
\item There are finitely many labelled relative effective Cartier
  divisors \(\mathfrak D_i\subset\mathfrak X\).  Their union
  \(\mathfrak D=\bigcup_i\mathfrak D_i\) is relative SNC: every
  intersection \(\mathfrak D_I\) is smooth over \(S\) of relative
  codimension \(|I|\), or is empty.  Put
  \(\mathfrak U=\mathfrak X\setminus\mathfrak D\).
\item There is a prescribed effective integral relative polar divisor
  \[
   \mathfrak P=\sum_i e_i\mathfrak D_i,
   \qquad e_i\in\mathbb Z_{\geq0},
  \]
  whose labelled coefficients are independent of the point of \(S\),
  together with a rational function \(\mathcal W\) on
  \(\mathfrak X\) that is regular on \(\mathfrak U\), has pole order
  bounded by \(\mathfrak P\), and satisfies
  \begin{equation}
   \operatorname{div}_{\infty}(\mathcal W_s)=\mathfrak P_s
   \qquad\text{for every geometric point }s\to S.
  \label{eq:fixed-pole-orders-fibrewise}
  \end{equation}
  On a component on which \(\mathcal W_s=0\), we use the convention
  \(\operatorname{div}_{\infty}(\mathcal W_s)=0\).
  Equivalently, if
  \(s_{\mathfrak P}\) denotes the canonical section of
  \(\mathcal O_{\mathfrak X}(\mathfrak P)\), the datum includes the
  section
  \[
   s_{\mathcal W}:=s_{\mathfrak P}\mathcal W
   \in H^0\!\left(\mathfrak X,
                   \mathcal O_{\mathfrak X}(\mathfrak P)\right).
  \]
  It is this section, together with \(s_{\mathfrak P}\), that is
  pulled back under an arbitrary change of base.
\item Every geometric fibre \(\mathfrak X_s\) has projective coarse
  moduli space, and
  \((\mathfrak X_s,\mathfrak D_s,\mathcal W_s)\) is an NC rational
  compactification of
  \((\mathfrak U_s,\mathcal W_s)\) in the sense of
  \cref{def:good-rational-stack-compactification}.
\end{enumerate}
\end{definition}
\begin{remark}[The product situation]
\label{rem:fixed-pair-product-family}
The principal applications below arise when the compactified space,
boundary, and polar divisor are constant over the base: there are
fixed data \((\mathscr X,\mathscr D,\mathscr P)\) and an isomorphism
over \(S\)
\begin{equation}
 (\mathfrak X,\mathfrak D,\mathfrak P)
 \simeq
 (\mathscr X,\mathscr D,\mathscr P)\times S.
\label{eq:split-fixed-pole-family}
\end{equation}
Then \(\mathfrak U\simeq\mathscr U\times S\), where
\(\mathscr U=\mathscr X\setminus\mathscr D\), and only the potential
varies.  This important special case is already covered by
\cref{ass:fixed-pole-orders-dm-family}.
\end{remark}

Because \(S\) and \(\mathfrak X/S\) are smooth and
\(\mathfrak D/S\) is relative SNC, \(\mathfrak X\) is smooth over
\(\mathbb C\), \(\mathfrak D\) is SNC over \(\mathbb C\), and the
logarithmic cotangent sequence is locally split exact:
\begin{equation}
 0\longrightarrow
 \pi^*\Omega_{S/\mathbb C}^1
 \longrightarrow
 \Omega_{\mathfrak X/\mathbb C}^1(\log\mathfrak D)
 \longrightarrow
 \Omega_{\mathfrak X/S}^1(\log\mathfrak D)
 \longrightarrow0.
\label{eq:part-iii-logarithmic-cotangent-transitivity}
\end{equation}
Put
\[
 \mathbf D
 :=d_{\mathfrak X/\mathbb C}
   +d_{\mathfrak X/\mathbb C}\mathcal W\wedge
\]
and consider the bounded absolute dg module
\begin{equation}
 \mathcal A_S^\bullet
 :=\left(
  \Omega_{\mathfrak X/\mathbb C}^\bullet(\log\mathfrak D)
  (*\mathfrak P),\mathbf D
 \right)
\label{eq:part-iii-absolute-yu-filtered-total-object}
\end{equation}
over \((\Omega_{\mathfrak X/\mathbb C}^\bullet,d)\), with action by
exterior multiplication.  Indeed, for an absolute \(r\)-form \(\eta\),
\begin{equation}
 \mathbf D(\eta\wedge\omega)
 =d\eta\wedge\omega+(-1)^r\eta\wedge\mathbf D\omega.
\label{eq:absolute-yu-dg-module-Leibniz}
\end{equation}

For \(\lambda\in\mathbb Q\), define a decreasing filtration by
subcomplexes by
\begin{equation}
 F^\lambda\mathcal A_S^m
 :=
 \begin{cases}
  0,&m<\lceil\lambda\rceil,\\[3pt]
  \Omega_{\mathfrak X/\mathbb C}^m(\log\mathfrak D)
  \bigl(\lfloor(m-\lambda)\mathfrak P\rfloor\bigr),
  &m\geq\lceil\lambda\rceil.
 \end{cases}
\label{eq:part-iii-absolute-yu-lattice}
\end{equation}
We abbreviate
\begin{equation}
\mathcal A_S^{\lambda,\bullet}
 :=F^\lambda\mathcal A_S^\bullet.
\label{eq:part-iii-absolute-yu-filtered-level-abbreviation}
\end{equation}
We call \(\mathcal A_S^{\lambda,\bullet}\) the absolute Yu lattice at
level \(\lambda\); these lattices form the Yu filtration \(F\) of the
absolute dg module.

Let \(L\) be the Katz--Oda filtration, equivalently the filtration
by the number of base forms, induced by the canonical dg-algebra morphism
\[
 \pi^{-1}(\Omega_{S/\mathbb C}^\bullet,d)
 \longrightarrow
 (\Omega_{\mathfrak X/\mathbb C}^\bullet,d),
\]
as in \eqref{eq:abstract-induced-base-degree-filtration}.
In the present logarithmic situation,
by \eqref{eq:part-iii-logarithmic-cotangent-transitivity} we have
\begin{align}
 \mathcal C_S^\bullet
 &:=\operatorname{gr}_L^0\mathcal A_S^\bullet\\
 &=\left(
  \Omega_{\mathfrak X/S}^\bullet(\log\mathfrak D)(*\mathfrak P),
  d_{\mathfrak X/S}+d_{\mathfrak X/S}\mathcal W\wedge
 \right),
\label{eq:part-iii-universal-relative-ambient-complex}\\
 \mathcal C_S^{\lambda,a}
 &:=\operatorname{gr}_L^0\mathcal A_S^{\lambda,\bullet}
   =F^\lambda\mathcal C_S^\bullet
\label{eq:part-iii-relative-yu-level-abbreviation}\\
 &=
 \begin{cases}
  0,&a<\lceil\lambda\rceil,\\[3pt]
  \Omega_{\mathfrak X/S}^a(\log\mathfrak D)
  \bigl(\lfloor(a-\lambda)\mathfrak P\rfloor\bigr),
  &a\geq\lceil\lambda\rceil.
 \end{cases}
\label{eq:part-iii-relative-yu-lattice}
\end{align}
We call \(\mathcal C_S^{\lambda,\bullet}\) the relative Yu lattice at
level \(\lambda\); the collection of these lattices is the relative
Yu filtration of \(\mathcal C_S^\bullet\).
The absolute-to-relative dg quotient of
\cref{lem:absolute-to-relative-dg-quotient} canonically makes these
complexes dg modules over
\((\Omega_{\mathfrak X/S}^\bullet,d_{\mathfrak X/S})\).

\begin{proposition}[Absolute and relative Yu complexes]
\label{prop:smooth-family-supplies-flat-yu-package}
\label{prop:yu-katz-oda-compatible-de-rham-module}
For every smooth family with fixed pole orders, the following
statements hold.
\begin{enumerate}[label=\textup{(\roman*)}]
\item The differential \(\mathbf D\) preserves every
  \(F^\lambda\mathcal A_S^\bullet\).  The filtration \(F\) is
  decreasing and exhaustive, and exterior multiplication induces
  \begin{equation}
   \Omega_{\mathfrak X/\mathbb C}^r\otimes
   F^{\lambda-r}\mathcal A_S^\bullet[-r]
   \longrightarrow F^\lambda\mathcal A_S^\bullet.
  \label{eq:part-iii-yu-filtered-absolute-de-rham-action}
  \end{equation}
\item The filtration \(L\) on each Yu lattice is the
  exterior-power filtration induced by
  \eqref{eq:part-iii-logarithmic-cotangent-transitivity}.  There are
  canonical isomorphisms of relative de Rham dg modules
  \begin{equation}
   \operatorname{gr}_L^r\mathcal A_S^{\lambda,\bullet}
   \xrightarrow{\ \sim\ }
   \pi^{-1}\Omega_{S/\mathbb C}^r
   \otimes_{\pi^{-1}\mathcal O_S}
   \mathcal C_S^{\lambda-r,\bullet}[-r]
   \qquad(r\geq0).
  \label{eq:part-iii-absolute-lattice-associated-graded}
  \end{equation}
\item Each \(\mathcal C_S^{\lambda,\bullet}\) is a bounded
  relative de Rham dg module whose terms are vector bundles on
  \(\mathfrak X\), hence coherent and flat over \(S\).  The
  inclusions for \(\mu\geq\lambda\),
  \[
   \mathcal C_S^{\mu,\bullet}
   \longrightarrow\mathcal C_S^{\lambda,\bullet},
  \]
  are relative dg-module morphisms and form a decreasing exhaustive
  filtration with locally finite lattice jumps.
\item Consequently
  \((\mathcal A_S^\bullet,\mathbf D,F)\) is a relatively
  \(S\)-flat, Katz--Oda-compatible filtered absolute de Rham dg
  module in the sense of
  \cref{def:katz-oda-compatible-filtered-de-rham-dg-module}, and its
  induced relative filtered dg module is exactly the system
  \(\{\mathcal C_S^{\lambda,\bullet}\}_{\lambda\in\mathbb Q}\)
  in \eqref{eq:part-iii-relative-yu-lattice}.
\end{enumerate}
\end{proposition}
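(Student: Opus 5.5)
The plan is to verify everything étale-locally on $\mathfrak X$, where the relative SNC hypothesis gives coordinates $t_1,\dots,t_m$ pulled back from $S$ and fibre coordinates $x_1,\dots,x_n$ with $\mathfrak D=\{x_1\cdots x_k=0\}$, so $\mathfrak D_i=\{x_i=0\}$ for the labelled components meeting the chart. In these coordinates
\[
\Omega^1_{\mathfrak X/\mathbb C}(\log\mathfrak D)
=\bigoplus_j\mathcal O\,dt_j\oplus\bigoplus_{i\le k}\mathcal O\,\tfrac{dx_i}{x_i}\oplus\bigoplus_{i>k}\mathcal O\,dx_i ,
\]
and \eqref{eq:part-iii-logarithmic-cotangent-transitivity} splits. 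All assertions are about subsheaves of $\Omega^\bullet(\log\mathfrak D)(*\mathfrak P)$ and morphisms between them. They can therefore be checked on a smooth atlas and descend, since the Yu lattices are defined by divisors and log forms compatible with étale pullback.

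For (i), I will use that $\mathcal W$ has pole order bounded by $\mathfrak P$: locally $\mathcal W=g/x^{e}$ with $g$ regular. Hence $d\mathcal W=x^{-e}(dg-g\sum e_i\,dx_i/x_i)$ lies in $\Omega^1(\log\mathfrak D)(\mathfrak P)$. This uses only the absolute log differential, and is where the fixed pole orders enter, since $t$-derivatives of $g$ do not increase the pole. Then $d\mathcal W\wedge$ maps degree $m$ with twist $\lfloor(m-\lambda)\mathfrak P\rfloor$ into degree $m+1$ with twist $\lfloor(m-\lambda)\mathfrak P\rfloor+\mathfrak P=\lfloor(m+1-\lambda)\mathfrak P\rfloor$. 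The logarithmic $d$ preserves twists by integral divisors supported on $\mathfrak D$, because $d(x^{-a}\omega)=x^{-a}(d\omega-\sum a_i\,\tfrac{dx_i}{x_i}\wedge\omega)$. The cutoff $m<\lceil\lambda\rceil$ is preserved because $\mathbf D$ raises degree. For the action \eqref{eq:part-iii-yu-filtered-absolute-de-rham-action}, an $r$-form times an element of degree $m$ at level $\lambda-r$ has twist $\lfloor(m-\lambda+r)\mathfrak P\rfloor$, which is exactly the level-$\lambda$ twist in degree $m+r$. It also satisfies $m+r\ge\lceil\lambda\rceil$. Here I use $\Omega^r\subset\Omega^r(\log\mathfrak D)$. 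Monotonicity and exhaustiveness in $\lambda$ are clear, since $\bigcup_\lambda$ gives arbitrary pole order along $\mathfrak P$ in every degree. Components with $e_i=0$ contribute no poles on either side.

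For (ii), which I expect to be the main bookkeeping step, I will use that the log forms decompose as $\bigoplus_r\pi^*\Omega^r_S\otimes\Omega^{m-r}_{\mathfrak X/S}(\log\mathfrak D)$ in the chosen frame. Then $L^r\mathcal A^{\lambda,m}$ is the twisted exterior-power filtration $\operatorname{Im}(\pi^*\Omega^r_S\otimes\Omega^{m-r}(\log)(\lfloor(m-\lambda)\mathfrak P\rfloor))$. The point to check is that intersecting $F^\lambda$ with the image of base forms gives exactly base forms times level-$(\lambda-r)$ sections. This holds because the twist only depends on the total degree: degree $m$ at $\lambda$ and degree $m-r$ at $\lambda-r$ carry the same twist, and the threshold $\lceil\lambda-r\rceil=\lceil\lambda\rceil-r$ matches. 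Since the Yu twist is a line-bundle twist, it commutes with the locally split filtration, so $\operatorname{gr}^r_L$ of the twisted sheaf is $\pi^*\Omega^r_S\otimes\Omega^{m-r}_{\mathfrak X/S}(\log)(\text{same twist})$. This shows that $\psi_{\lambda,r}$ of Lemma \ref{lem:absolute-to-relative-dg-quotient}(iii) is a termwise isomorphism. It is already a morphism of relative dg modules by that lemma, so it is an isomorphism of complexes, and its inverse gives \eqref{eq:part-iii-absolute-lattice-associated-graded}. The case $r=0$ yields the explicit description \eqref{eq:part-iii-relative-yu-lattice} of $\mathcal C^{\lambda,\bullet}_S$ and the differential $d_{\mathfrak X/S}+d_{\mathfrak X/S}\mathcal W\wedge$.

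For (iii), the terms $\Omega^a_{\mathfrak X/S}(\log\mathfrak D)(\lfloor(a-\lambda)\mathfrak P\rfloor)$ are locally free on $\mathfrak X$. This follows because the relative log cotangent sheaf is locally free by relative SNC, and $\mathfrak P$ is Cartier. Since $\pi$ is flat, these terms are $S$-flat, and they are coherent. Boundedness holds because $0\le a\le n$. The remaining assertions follow from Lemma \ref{lem:absolute-to-relative-dg-quotient}(ii),(iii): the inclusions are relative dg morphisms, the filtration is decreasing and exhaustive, and the jumps lie in $\{a-j/e_i\}$, which is locally finite. Part (iv) then simply collects (i)–(iii) against Definition \ref{def:katz-oda-compatible-filtered-de-rham-dg-module}, with $\Lambda=\mathbb Q$ stable under nonpositive integer translation.
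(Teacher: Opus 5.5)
Your proposal is correct and follows the paper's proof essentially step for step. The shared ingredients are: \'etale-local relative SNC coordinates; the bound $d\mathcal W\in\Omega^1_{\mathfrak X/\mathbb C}(\log\mathfrak D)(\mathfrak P)$ together with the identity $\lfloor(m-\lambda)\mathfrak P\rfloor+\mathfrak P=\lfloor(m+1-\lambda)\mathfrak P\rfloor$; the degree/level bookkeeping $(m-r)-(\lambda-r)=m-\lambda$ for both the absolute action and the exterior-power filtration of the split logarithmic cotangent sequence; and local freeness plus flatness of $\pi$ for (iii).

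The one small deviation is in (ii). You obtain compatibility of \eqref{eq:part-iii-absolute-lattice-associated-graded} with the differentials from the fact that $\psi_{\lambda,r}$ is already a relative dg-module morphism by \cref{lem:absolute-to-relative-dg-quotient}(iii), whereas the paper recomputes $\mathbf D$ modulo $L^{r+1}$; this is a legitimate shortcut. Separately, your list of lattice jumps should also include the integer thresholds $\lambda=a$, which are not of the form $a-j/e_i$ when no $e_i$ is positive.
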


\begin{proof}
The assertions are local for the smooth topology and may be checked
after pullback to an \(\acute{e}\)tale scheme atlas.  In relative SNC
coordinates, write
\[
 \mathfrak D=(x_1\cdots x_r=0),
 \qquad
 \mathcal W=x_1^{-e_1}\cdots x_r^{-e_r}G.
\]
Absolute
differentiation gives
\begin{equation}
 d_{\mathfrak X/\mathbb C}\mathcal W
 \in
 \Omega_{\mathfrak X/\mathbb C}^1(\log\mathfrak D)(\mathfrak P).
\label{eq:part-iii-absolute-dW-logarithmic-pole-bound}
\end{equation}
Indeed, differentiating the polar monomial produces
\(dx_i/x_i\), while differentiating \(G\) introduces no additional
Cartier pole.  Logarithmic differentiation preserves every integral
Cartier twist, and
\[
 \lfloor(m-\lambda)\mathfrak P\rfloor+\mathfrak P
 =\lfloor(m+1-\lambda)\mathfrak P\rfloor.
\]
It follows that \(\mathbf D\) preserves
\eqref{eq:part-iii-absolute-yu-lattice}.  Formula
\eqref{eq:absolute-yu-dg-module-Leibniz} proves the dg-module
compatibility.  If an \(r\)-form acts on a term of form degree
\(m-r\), then
\[
 (m-r)-(\lambda-r)=m-\lambda,
 \qquad
 m-r\geq\lceil\lambda-r\rceil
 \Longleftrightarrow m\geq\lceil\lambda\rceil,
\]
which proves the filtration shift in
\eqref{eq:part-iii-yu-filtered-absolute-de-rham-action}.
Exhaustiveness follows degreewise by allowing \(\lambda\) to tend to
\(-\infty\).

Taking exterior powers of
\eqref{eq:part-iii-logarithmic-cotangent-transitivity} gives
\[
 \operatorname{gr}_L^r
 \Omega_{\mathfrak X/\mathbb C}^m(\log\mathfrak D)
 \simeq
 \pi^*\Omega_{S/\mathbb C}^r\otimes
 \Omega_{\mathfrak X/S}^{m-r}(\log\mathfrak D).
\]
With \(a=m-r\), one has
\[
 \lfloor(m-\lambda)\mathfrak P\rfloor
 =\lfloor(a-(\lambda-r))\mathfrak P\rfloor,
 \qquad
 m\geq\lceil\lambda\rceil
 \Longleftrightarrow
 a\geq\lceil\lambda-r\rceil.
\]
Modulo the next Katz--Oda step, \(\mathbf D\) becomes the relative
twisted differential; the shift \([-r]\) records its sign.  This
proves \eqref{eq:part-iii-absolute-lattice-associated-graded} and,
at \(r=0\), the formulas
\eqref{eq:part-iii-universal-relative-ambient-complex} and
\eqref{eq:part-iii-relative-yu-lattice}.

Relative logarithmic forms are locally free for a relative SNC
divisor, and every rounded Cartier twist is a line bundle.  Thus all
terms of every relative Yu lattice are vector bundles on
\(\mathfrak X\).  They are flat over \(S\) because
\(\mathfrak X\to S\) is flat.  The monotonicity of the ceiling and
rounded divisors supplies the transition inclusions.  In degree \(a\)
they change only when \(\lambda\) crosses an integer or a number
\(a-j/e_i\) with \(e_i>0\); hence the set of lattice jumps is locally
finite.
Relative \(S\)-flatness and Katz--Oda compatibility now follow,
respectively, from termwise flatness and
\eqref{eq:part-iii-absolute-lattice-associated-graded}; see
\cref{def:katz-oda-compatible-filtered-de-rham-dg-module}.
\end{proof}

\subsection{Perfect direct images and canonical fibres}
\label{subsec:perfect-direct-images-fixed-pole-orders}
\label{subsec:fixed-pole-scalar-extension-perfect-fibres}

\begin{definition}[The direct-image system]
\label{def:yu-direct-image-system}
For \(\lambda\in\mathbb Q\), put
\begin{equation}
 \mathcal R_S^\lambda
 :=R\pi_{*,S}\mathcal C_S^{\lambda,\bullet}
 \in D(\mathcal O_S).
\label{eq:part-iii-perfect-direct-image-level}
\end{equation}
Because the lattice jumps are locally finite, we may set
\begin{equation}
 \mathcal C_S^{>\lambda,\bullet}
 :=
 \bigcup_{\substack{\mu>\lambda\\\mu\in\mathbb Q}}
 \mathcal C_S^{\mu,\bullet};
\label{eq:part-iii-yu-lattice-above-lambda}
\end{equation}
and, for every \(\lambda\), choose \(\mu>\lambda\) sufficiently close
to \(\lambda\) so that
\(\mathcal C_S^{>\lambda,\bullet}
=\mathcal C_S^{\mu,\bullet}\).  Define the \emph{rational graded
direct-image object}
\begin{equation}
 \mathcal G_S^\lambda
 :=
 \operatorname{Cone}\!\left(
  R\pi_{*,S}\mathcal C_S^{>\lambda,\bullet}
  \longrightarrow
  \mathcal R_S^\lambda
 \right).
\label{eq:part-iii-rational-graded-direct-image}
\end{equation}
Finally, define
\begin{equation}
\mathcal R_S^{-\infty}
 :=\operatorname*{hocolim}_{\lambda\to-\infty}
 \mathcal R_S^\lambda.
\label{eq:part-iii-limiting-direct-image}
\end{equation}
We call this the \emph{limiting direct image} and denote its cocone
maps by
\begin{equation}
 \jmath_\lambda\colon
 \mathcal R_S^\lambda\longrightarrow\mathcal R_S^{-\infty}.
\label{eq:part-iii-limiting-cocone-map}
\end{equation}
The nonpositive integers are cofinal, so
\(\mathcal R_S^{-\infty}\) is equivalently the homotopy colimit of
\(\mathcal R_S^0\to\mathcal R_S^{-1}\to\cdots\).
\end{definition}

Let \(g\colon T\to S\) be a finite-type morphism and let
\(g_{\mathfrak X}\colon\mathfrak X_T\to\mathfrak X\) be the base
change.  Pull back the labelled divisors, the polar divisor, and the
pair of sections \((s_{\mathfrak P},s_{\mathcal W})\); write the
resulting relative Yu lattices as
\(\mathcal C_T^{\lambda,\bullet}\).

\begin{proposition}[Perfect direct images and canonical fibres]
\label{prop:part-iii-yu-scalar-perfect-canonical-fibres}
\label{cor:smooth-yu-level-perfect-direct-images}
\label{cor:part-iii-derived-fibres-perfect-levels}
\label{cor:part-iii-relative-level-canonical-fibres}
\label{lem:part-iii-rational-graded-perfect-fibres}
For every \(\lambda\in\mathbb Q\), the following statements hold.
\begin{enumerate}[label=\textup{(\roman*)}]
\item
  There is a canonical isomorphism of relative de Rham dg modules
  \begin{equation}
   Lg_{\mathrm{sc}}^*\mathcal C_S^{\lambda,\bullet}
   \xrightarrow{\ \sim\ }
   \mathcal C_T^{\lambda,\bullet}.
  \label{eq:part-iii-level-arbitrary-base-change}
  \end{equation}
  It intertwines the differentials, relative dg-module actions, and
  all transition maps, and is compatible with composition of base
  changes.

\item
  The object \(\mathcal R_S^\lambda\) is perfect, and there is a
  canonical isomorphism
  \begin{equation}
   Lg^*\mathcal R_S^\lambda
   \xrightarrow{\ \sim\ }
   R(\pi_T)_{*,T}\mathcal C_T^{\lambda,\bullet}.
  \label{eq:smooth-yu-level-perfect-base-change}
  \end{equation}
  These isomorphisms are compatible with composition and with every
  transition map.

\item
  For every geometric point \(s\to S\), scalar extension and proper
  base change give canonical identifications
  \begin{align}
   Li_{s,\mathrm{sc}}^*\mathcal C_S^{\lambda,\bullet}
   &\simeq
   F^\lambda K_{\mathfrak X_s,\mathcal W_s}^\bullet,
  \label{eq:part-iii-universal-level-derived-fibre}\\
   Li_s^*\mathcal R_S^\lambda
   &\simeq
   R\Gamma\!\left(\mathfrak X_s,
    F^\lambda K_{\mathfrak X_s,\mathcal W_s}^\bullet\right),
  \label{eq:part-iii-perfect-level-derived-fibre}\\
   H^k(Li_s^*\mathcal R_S^\lambda)
   &\simeq
   \mathbb H^k\!\left(\mathfrak X_s,
    F^\lambda K_{\mathfrak X_s,\mathcal W_s}^\bullet\right).
  \label{eq:part-iii-perfect-level-fibre-hypercohomology}
  \end{align}
  They carry derived fibres of transition maps to the maps induced by
  the corresponding inclusions of fibrewise Yu lattices.

\item
  For every geometric point \(s\to S\) and every fixed
  \(\alpha\in\mathbb Q\cap[0,1)\), the spectral sequence of the
  integer-indexed fibrewise Yu filtration
  \(F_{\alpha,s}^p:=F^{p-\alpha}
  K_{\mathfrak X_s,\mathcal W_s}^\bullet\) degenerates at \(E_1\).
  This degeneration
  implies that, for every
  \(k\), restriction to \(\mathfrak U_s\) induces an injection
  \begin{equation}
   \mathbb H^k\!\left(
    \mathfrak X_s,F^\lambda
    K_{\mathfrak X_s,\mathcal W_s}^\bullet
   \right)
   \lhook\joinrel\longrightarrow
   H_{\mathrm{dR}}^k(\mathfrak U_s,\mathcal W_s).
  \label{eq:part-iii-fibre-level-injection}
  \end{equation}
  The compactification-comparison theorem of the first companion
  paper identifies the image of
  \eqref{eq:part-iii-fibre-level-injection} with
  \[
   F_{\mathrm{irr}}^\lambda
   H_{\mathrm{dR}}^k(\mathfrak U_s,\mathcal W_s).
  \]
  This is the only item of the proposition that invokes the
  \(E_1\)-degeneration theorem; the final identification in
  \textup{(v)} follows from the injections at the two adjacent Yu
  lattices.

\item
  Scalar extension identifies the lattices
  \(\mathcal C_S^{>\lambda,\bullet}\), and the rational graded
  direct-image objects are perfect and satisfy canonical
  finite-type derived base change:
  \begin{align}
   Lg_{\mathrm{sc}}^*\mathcal C_S^{>\lambda,\bullet}
   &\xrightarrow{\ \sim\ }
   \mathcal C_T^{>\lambda,\bullet},
  \label{eq:part-iii-above-lambda-base-change}\\
   Lg^*\mathcal G_S^\lambda
   &\xrightarrow{\ \sim\ }
   \mathcal G_T^\lambda.
  \label{eq:part-iii-rational-graded-base-change}
  \end{align}
  For a geometric point \(s\to S\), put
  \[
   \mathcal Q_{s,\lambda}^\bullet
   :=
   \mathcal C_s^{\lambda,\bullet}/
   \mathcal C_s^{>\lambda,\bullet}.
  \]
  Then there are canonical identifications
  \begin{align}
   Li_s^*\mathcal G_S^\lambda
   &\simeq
   R\Gamma(\mathfrak X_s,\mathcal Q_{s,\lambda}^\bullet),
  \label{eq:part-iii-graded-object-fibre-quotient-complex}\\
   H^q\!\left(Li_s^*\mathcal G_S^\lambda\right)
   &\simeq
   \Gr_{F_{\mathrm{irr}}}^{\lambda}
   H_{\mathrm{dR}}^q(\mathfrak U_s,\mathcal W_s).
  \label{eq:part-iii-graded-perfect-fibre-canonical-graded}
  \end{align}

\end{enumerate}
\end{proposition}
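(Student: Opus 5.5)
The plan is to handle items \textup{(i)}--\textup{(v)} in order, since each feeds the next. The main tools are the termwise description \eqref{eq:part-iii-relative-yu-lattice} of the relative Yu lattices, \cref{thm:perfect-direct-images-s-linear-complexes} and \cref{cor:abstract-flat-complex-derived-fibres-cones}, and \cref{prop:nc-rational-level-graded-comparison} on each fibre.

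For \textup{(i)}, I would use that every term \(\mathcal C_S^{\lambda,a}\) is a vector bundle, hence flat over \(S\), by \cref{prop:smooth-family-supplies-flat-yu-package}(iii). Then \eqref{eq:s-linear-scalar-extension-restriction-comparison} computes \(Lg_{\mathrm{sc}}^*\) termwise as the ordinary pullback \(g_{\mathfrak X}^*\).

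It remains to identify \(g_{\mathfrak X}^*\) of each term with the corresponding term of \(\mathcal C_T^{\lambda,\bullet}\).
\begin{itemize}
\item Relative logarithmic forms commute with base change for a relative SNC divisor; this is the logarithmic analogue of \eqref{eq:relative-de-rham-dg-algebra-base-change}, checked in relative SNC coordinates on an \'etale atlas.
\item The line bundles \(\mathcal O(\lfloor(a-\lambda)\mathfrak P\rfloor)\) pull back to the same rounded twists, because the labelled coefficients \(e_i\) are fixed and each \(\mathfrak D_i\) pulls back to a relative Cartier divisor.
\item For the differential, \(d_{\mathfrak X/S}\) is compatible with pullback. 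The term \(d_{\mathfrak X/S}\mathcal W\wedge\) is determined by the pulled-back pair \((s_{\mathfrak P},s_{\mathcal W})\), since \(\mathcal W=s_{\mathcal W}/s_{\mathfrak P}\) on the complement of \(\mathfrak D\), and \(\mathcal W_T\) is defined the same way. Density of \(\mathfrak U_T\) in \(\mathfrak X_T\) then makes the identification of the differentials a formal consequence.
\end{itemize}
Transition maps are the inclusions on both sides, so they are intertwined. Compatibility with composition follows from the canonicity of pullback isomorphisms.

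Item \textup{(ii)} is then \cref{thm:perfect-direct-images-s-linear-complexes} applied to the indexed system, which is legitimate by \cref{prop:smooth-family-supplies-flat-yu-package}(iv). I would compose \eqref{eq:part-iii-level-perfect-arbitrary-base-change} with \eqref{eq:part-iii-level-arbitrary-base-change}.

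For \textup{(iii)}, take \(T=s\). Then \(\mathcal C_s^{\lambda,\bullet}\) is the Yu lattice of the fibre triple \((\mathfrak X_s,\mathfrak D_s,\mathcal W_s)\). This uses two inputs:
\begin{itemize}
\item \(\Omega_{\mathfrak X_s/s}=\Omega_{\mathfrak X_s}\), since \(s\) is the spectrum of an algebraically closed field;
\item the polar divisor of \(\mathcal W_s\) is exactly \(\mathfrak P_s\) by \eqref{eq:fixed-pole-orders-fibrewise}, so the rounded twists agree with \eqref{eq:companion-rational-yu-lattice}.
\end{itemize}
This is the one place where equality of pole orders, rather than just a bound on them, is used. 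Taking hypercohomology gives \eqref{eq:part-iii-perfect-level-derived-fibre} and \eqref{eq:part-iii-perfect-level-fibre-hypercohomology}, and naturality handles the transition maps.

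For \textup{(iv)}, condition \textup{(iv)} of \cref{ass:fixed-pole-orders-dm-family} makes each fibre an NC rational compactification with projective coarse space. Hence the coarse space of \(\mathfrak U_s\) is quasiprojective, as an open subspace of a projective one, and \cref{thm:companion-fixed-alpha-e1-degeneration} applies. The injection and the identification of its image are \cref{prop:nc-rational-level-graded-comparison}(i).

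For \textup{(v)}, choose \(\mu>\lambda\) with \(\mathcal C_S^{>\lambda,\bullet}=\mathcal C_S^{\mu,\bullet}\). I expect this to be the main obstacle: the jumps of the fibre lattices must be controlled uniformly in the base, so that the same \(\mu\) works after any base change. This should hold because the jump set depends only on the fixed integers \(e_i\) and the degrees \(a\), and is therefore independent of \(T\). With that, \eqref{eq:part-iii-above-lambda-base-change} is \textup{(i)} at level \(\mu\).

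Perfectness of \(\mathcal G_S^\lambda\) and \eqref{eq:part-iii-rational-graded-base-change} then follow from \cref{cor:abstract-flat-complex-derived-fibres-cones}. At a geometric point, \(Li_s^*\mathcal G_S^\lambda\) is the cone of \(R\Gamma(F^\mu K)\to R\Gamma(F^\lambda K)\). The termwise-split short exact sequence of complexes identifies this cone with \(R\Gamma(\mathcal Q_{s,\lambda}^\bullet)\). Finally, \eqref{eq:part-iii-graded-perfect-fibre-canonical-graded} is \cref{prop:nc-rational-level-graded-comparison}(ii), which uses the injections at both adjacent levels as in \cref{rem:fractional-parameters-levelwise-comparison}.
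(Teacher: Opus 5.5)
Your proposal is correct and follows the paper's proof essentially step by step. It computes the scalar extension by termwise pullback of the locally free relative Yu terms, using the fixed $e_i$ for the rounded twists. It then applies \cref{thm:perfect-direct-images-s-linear-complexes} for (ii), specializes to $T=s$ for (iii), invokes \cref{prop:nc-rational-level-graded-comparison}(i) and (ii) for (iv) and (v), and chooses $\mu$ uniformly across base changes for $\mathcal C^{>\lambda}$. Your density argument for the twisted differential is an equivalent substitute for the paper's appeal to functoriality of the universal relative derivation. Since $T$ may be non-reduced, it needs scheme-theoretic density, which holds because $\mathfrak D_T$ is an effective Cartier divisor.

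One small correction in (v): the inclusions $\mathcal C_s^{\mu,a}\subseteq\mathcal C_s^{\lambda,a}$ are inclusions of Cartier twists with torsion cokernel, so the sequence is not termwise split. An ordinary short exact sequence of complexes already gives the triangle identifying the cone with $R\Gamma(\mathfrak X_s,\mathcal Q_{s,\lambda}^\bullet)$.
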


\begin{proof}
Smooth base change and the relative SNC condition give
\[
 g_{\mathfrak X}^*
 \Omega_{\mathfrak X/S}^a(\log\mathfrak D)
 \simeq
 \Omega_{\mathfrak X_T/T}^a(\log\mathfrak D_T).
\]
Pullback of labelled Cartier divisors commutes with coefficientwise
rounding because the coefficients \(e_i\) are fixed.  Hence pullback
identifies every term in
\eqref{eq:part-iii-relative-yu-lattice} with the
corresponding term over \(T\).  Functoriality of the universal
relative derivation identifies
\(d_{\mathfrak X/S}\mathcal W\) with
\(d_{\mathfrak X_T/T}\mathcal W_T\), so these termwise
identifications also intertwine the differentials.  The source terms
are \(S\)-flat, and therefore ordinary termwise pullback computes
derived scalar extension.  Functoriality of exterior multiplication
and of inclusions of Cartier twists proves all the compatibilities in
\textup{(i)}.

After forgetting positive-degree relative forms,
\cref{prop:smooth-family-supplies-flat-yu-package} gives an indexed
system of bounded \(S\)-linear complexes with coherent \(S\)-flat
terms.  Applying
\cref{thm:perfect-direct-images-s-linear-complexes} and
then using \textup{(i)} proves \textup{(ii)}.  Taking \(T=s\) in
\textup{(i)} gives the first formula in \textup{(iii)} by
\eqref{eq:companion-rational-yu-lattice}; taking \(T=s\) in
\textup{(ii)} gives the second, and taking cohomology gives the
third.  Compatibility with transitions follows from
\textup{(i)--(ii)}.

It remains to prove \textup{(iv)}.  By
\cref{ass:fixed-pole-orders-dm-family}, every fibre is an NC rational
compactification with projective coarse space, and its open coarse
space is quasiprojective.  Item~\textup{(i)} of
\cref{prop:nc-rational-level-graded-comparison} gives
\eqref{eq:part-iii-fibre-level-injection} and identifies its image
with the canonical irregular Hodge level.  Its injectivity is the
\(E_1\)-degeneration input from the second companion paper, while the
identification of the image is the compactification-comparison input
from the first.

For \textup{(v)}, local finiteness of the lattice jumps allows us to
choose
\(\mu>\lambda\), sufficiently close to \(\lambda\), such that
\(\mathcal C_S^{>\lambda,\bullet}=\mathcal C_S^{\mu,\bullet}\).
The possible jumps depend only on the fixed polar multiplicities, so
the same equality holds after every base change.  Thus
\eqref{eq:part-iii-above-lambda-base-change} follows from
\textup{(i)}.  Applying \textup{(ii)} at \(\mu\) shows that the two
terms defining \(\mathcal G_S^\lambda\) are perfect and commute with
derived base change.  The same assertions for their cone, including
\eqref{eq:part-iii-rational-graded-base-change}, follow formally.

Taking the derived fibre of the defining cone and using the termwise
inclusion
\(\mathcal C_s^{>\lambda,\bullet}\subseteq
\mathcal C_s^{\lambda,\bullet}\) gives
\eqref{eq:part-iii-graded-object-fibre-quotient-complex}.  Item
\textup{(ii)} of
\cref{prop:nc-rational-level-graded-comparison} then identifies
its cohomology with the rational graded piece, proving
\eqref{eq:part-iii-graded-perfect-fibre-canonical-graded}.  In
particular, the argument using the images at \(\lambda\) and
\(>\lambda\) remains valid when the two levels have different
fractional parts.

\end{proof}

\subsection{The limiting direct image and its stabilization}
\label{subsec:limiting-direct-image-fixed-pole-orders}
\label{subsec:fixed-pole-finite-support}

\begin{proposition}[Stabilization and finitely many lattice jumps]
\label{prop:part-iii-nonpositive-transitions-isomorphisms}
\label{cor:part-iii-finite-indexing-perfect-filtration}
The following statements hold.
\begin{enumerate}[label=\textup{(\roman*)}]
\item For every \(\lambda\leq0\), the transition morphism
  \begin{equation}
   \tau_\lambda\colon
   \mathcal R_S^0\longrightarrow\mathcal R_S^\lambda
  \label{eq:part-iii-nonpositive-transition-map}
  \end{equation}
  is an isomorphism in \(D_{\mathrm{perf}}(S)\), compatibly with
  arbitrary finite-type base change.  Consequently, the cocone map
  \begin{equation}
   \jmath_0\colon
   \mathcal R_S^0\xrightarrow{\ \sim\ }
   \mathcal R_S^{-\infty}
  \label{eq:part-iii-level-zero-limiting-isomorphism}
  \end{equation}
  is an isomorphism.

\item For every geometric point \(s\to S\), there is a canonical
  quasi-isomorphism
  \begin{equation}
   Li_s^*\mathcal R_S^{-\infty}
   \xrightarrow{\ \sim\ }
   R\Gamma\!\left(
    \mathfrak U_s,
    (\Omega_{\mathfrak U_s}^\bullet,
     d+d\mathcal W_s\wedge)
   \right).
  \label{eq:part-iii-limiting-twisted-de-rham-fibre}
  \end{equation}

\item For every geometric point \(s\to S\) and every \(q\), the
  subspaces
  \[
   \operatorname{Im}\!\left\{
    \mathbb H^q(\mathfrak X_s,
      F^\lambda K_{\mathfrak X_s,\mathcal W_s}^\bullet)
    \longrightarrow
    H_{\mathrm{dR}}^q(\mathfrak U_s,\mathcal W_s)
   \right\}
  \]
  change at only finitely many \(\lambda\in[0,n]\).
\end{enumerate}
\end{proposition}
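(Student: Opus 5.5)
For \textup{(i)}, I would reduce to geometric fibres using the perfect-complex criteria of \cref{sec:perfect-s-linear-complexes}. By \cref{prop:part-iii-yu-scalar-perfect-canonical-fibres}\textup{(ii)}, both \(\mathcal R_S^0\) and \(\mathcal R_S^\lambda\) are perfect, and \(\tau_\lambda\) is compatible with derived base change. By \cref{lem:part-iii-perfect-fibrewise-isomorphism-criterion}, it is therefore enough to show that \(Li_s^*\tau_\lambda\) is an isomorphism at every geometric point \(s\to S\).

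Under \eqref{eq:part-iii-perfect-level-derived-fibre}, this derived fibre is the map
\[
 R\Gamma(\mathfrak X_s,F^0K_{\mathfrak X_s,\mathcal W_s}^\bullet)
 \to
 R\Gamma(\mathfrak X_s,F^\lambda K_{\mathfrak X_s,\mathcal W_s}^\bullet)
\]
induced by inclusion of fibrewise Yu lattices. Compose it with the restriction to \(R\Gamma(\mathfrak U_s,(\Omega^\bullet,d+d\mathcal W_s\wedge))\). By \cref{prop:nc-rational-level-graded-comparison}\textup{(iii)}, applied at the levels \(0\) and \(\lambda\), both restrictions are quasi-isomorphisms. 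The fibre hypotheses of that proposition hold by \cref{ass:fixed-pole-orders-dm-family}\textup{(iv)}: each fibre is an NC rational compactification, and its open coarse space is quasiprojective because the fibre has projective coarse space. Two-out-of-three then makes the transition a quasi-isomorphism.

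Compatibility with base change follows because the transition is natural and the base-change isomorphisms of \cref{prop:part-iii-yu-scalar-perfect-canonical-fibres}\textup{(ii)} respect transitions. For the cocone statement, every transition \(\mathcal R_S^{\mu}\to\mathcal R_S^{\lambda}\) with \(\lambda\le\mu\le 0\) is an isomorphism, since it factors the isomorphism \(\tau_\lambda\) through \(\tau_\mu\). The system is thus constant below \(0\), so \(\jmath_0\) is an isomorphism, as in \cref{prop:abstract-shifted-katz-oda}\textup{(iii)}.

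For \textup{(ii)}, I would combine three identifications. By \textup{(i)}, \(Li_s^*\mathcal R_S^{-\infty}\simeq Li_s^*\mathcal R_S^0\). This is \(R\Gamma(\mathfrak X_s,F^0K_{\mathfrak X_s,\mathcal W_s})\) by \eqref{eq:part-iii-perfect-level-derived-fibre}. That in turn is the twisted de Rham complex of \(\mathfrak U_s\) by \eqref{eq:nc-rational-nonpositive-level-comparison}. The one point needing care is canonicity, that is, independence of the chosen low representative. Using the transition isomorphisms from \textup{(i)}, one checks that these composites agree for all \(\lambda\le0\).

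For \textup{(iii)}, the relevant images are the levels \(F_{\mathrm{irr}}^\lambda\), by \cref{prop:nc-rational-level-graded-comparison}\textup{(i)}. On the fixed fibre the Yu lattice \(F^\lambda K\) itself changes only at lattice jumps. In each form degree \(a\) these jumps occur only when \(\lambda\) crosses an integer or a value \(a-j/e_i\). Since \(0\le a\le n\) and only finitely many labels occur, there are finitely many such values in any bounded interval, in particular in \([0,n]\). Between consecutive jumps the lattice is constant, so its image is constant. The images therefore change at only finitely many \(\lambda\in[0,n]\).

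I expect the main obstacle to be bookkeeping rather than mathematics. One must verify that the fibre identification of \(Li_s^*\tau_\lambda\) really is the restriction-compatible inclusion map, so that \cref{prop:nc-rational-level-graded-comparison}\textup{(iii)} applies at both levels with the same target. One must also check that the resulting identification in \textup{(ii)} does not depend on the choice of \(\lambda\).
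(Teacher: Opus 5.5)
Your proposal is correct and follows the paper's proof essentially step for step. For (i) you use perfectness, the identification of \(Li_s^*\tau_\lambda\) with the inclusion of fibrewise Yu lattices, the nonpositive-level comparison quasi-isomorphisms at \(0\) and \(\lambda\), and fibrewise conservativity; for (ii) you compose the level-zero fibre identification with that comparison and the inverse of \(Li_s^*\jmath_0\); and for (iii) you use local finiteness of the lattice jumps in each form degree \(0\le a\le n\), where the paper's extra remark that the images are everything for \(\lambda\le0\) and zero for \(\lambda>n\) is not needed for your argument.
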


\begin{proof}
By items~\textup{(ii)--(iii)} of
\cref{prop:part-iii-yu-scalar-perfect-canonical-fibres}, the source
and target of \(\tau_\lambda\) are perfect, and its derived fibre is
the transition between the corresponding fibrewise Yu lattices.  For
every \(\nu\leq0\), item~\textup{(iii)} of
\cref{prop:nc-rational-level-graded-comparison} gives the
quasi-isomorphism
\begin{equation}
 R\Gamma\!\left(
  \mathfrak X_s,F^\nu K_{\mathfrak X_s,\mathcal W_s}^\bullet
 \right)
 \xrightarrow{\ \sim\ }
 R\Gamma\!\left(
  \mathfrak U_s,
  (\Omega_{\mathfrak U_s}^\bullet,
   d+d\mathcal W_s\wedge)
 \right).
\label{eq:part-iii-nonpositive-level-ambient-comparison}
\end{equation}

For \(\nu=0\) and \(\nu=\lambda\), the two maps in
\eqref{eq:part-iii-nonpositive-level-ambient-comparison} identify the
derived fibre of \(\tau_\lambda\) with an isomorphism.  Fibrewise
conservativity for perfect complexes,
\cref{lem:part-iii-perfect-fibrewise-isomorphism-criterion}, proves
that \(\tau_\lambda\) is an isomorphism.  Its compatibility with
finite-type base change follows from the transition-compatible
isomorphisms in item~\textup{(ii)} of
\cref{prop:part-iii-yu-scalar-perfect-canonical-fibres}.  Thus every
map \(\mathcal R_S^{-j}\to\mathcal R_S^{-j-1}\), \(j\geq0\), is an
isomorphism, so
\(\jmath_0\) is an isomorphism.  This proves \textup{(i)}.

For \textup{(ii)}, compose the level-zero case of item~\textup{(iii)}
of the scalar-extension proposition with
\eqref{eq:part-iii-nonpositive-level-ambient-comparison} and the
inverse of \(Li_s^*\jmath_0\).

For \textup{(iii)}, write
\(\mathfrak P=\sum_i e_i\mathfrak D_i\).  In degree \(a\), a Yu
lattice changes only when \(\lambda\) crosses an integer or when
\((a-\lambda)e_i\in\mathbb Z\) for some \(e_i>0\).  Since
\(0\leq a\leq n\), only finitely many such values occur in
\([0,n]\).  Thus the relative Yu filtration, and hence its
cohomological image filtration, changes at only finitely many values
in that interval.
Moreover, the cocone identity
\(\jmath_{\lambda}\circ\tau_{\lambda}=\jmath_0\) and
\textup{(i)} show that the fibrewise image is the whole cohomology
for \(\lambda\leq0\), whereas it is zero for \(\lambda>n\), because
no term of the relative de Rham complex then survives.
Item~\textup{(iv)} of the
scalar-extension proposition identifies these images, and all the
intermediate ones, with the corresponding canonical irregular Hodge
levels.
\end{proof}

The rational graded direct-image objects were defined in
\eqref{eq:part-iii-rational-graded-direct-image}.  For every
\(\lambda\in\mathbb Q\), their definition gives a distinguished
triangle
\begin{equation}
 R\pi_{*,S}\mathcal C_S^{>\lambda,\bullet}
 \longrightarrow
 \mathcal R_S^\lambda
 \longrightarrow
 \mathcal G_S^\lambda
 \xrightarrow{+1}.
\label{eq:part-iii-rational-graded-triangle}
\end{equation}

\subsection{Irregular Hodge bundles}
\label{subsec:part-iii-total-twisted-cohomology-bundle}
\label{subsec:part-iii-graded-bundles-filtered-constancy}

Put
\[
 \mathcal H_S^q:=\mathcal H^q(\mathcal R_S^{-\infty}),
 \qquad
 \mathcal E_\lambda^q:=\mathcal H^q(\mathcal G_S^\lambda),
\]
and let
\[
 \mathcal I_\lambda^q
 :=
 \operatorname{Im}\!\left\{
  \mathcal H^q(\mathcal R_S^\lambda)
  \longrightarrow\mathcal H_S^q
 \right\}.
\]

\begin{theorem}[Irregular Hodge bundles in families with fixed pole
orders]
\label{thm:part-iii-total-cohomology-vector-bundles}
\label{thm:part-iii-relative-graded-filtered-bundles}
\label{thm:fixed-pole-dm-coefficient-bundles}
\label{thm:smooth-fixed-pole-filtered-bundles}
\label{thm:irregular-hodge-bundles-fixed-pole-orders}
\label{cor:part-iii-constancy-irregular-hodge-numbers}
\label{prop:part-iii-connection-smooth-base-change}
For every \(q\in\mathbb Z\) and \(\lambda\in\mathbb Q\), the
following hold.
\begin{enumerate}[label=\textup{(\roman*)}]
\item The sheaf \(\mathcal H_S^q\) is finite locally free and carries
  the canonical integrable connection
  \begin{equation}
   \nabla_S^q\colon
   \mathcal H_S^q\longrightarrow
   \Omega_{S/\mathbb C}^1\otimes\mathcal H_S^q.
  \label{eq:part-iii-total-twisted-katz-oda-connection}
  \end{equation}
  Its formation commutes with arbitrary finite-type base change:
  \begin{equation}
   g^*\mathcal H_S^q
   \xrightarrow{\ \sim\ }
   \mathcal H^q(Lg^*\mathcal R_S^{-\infty}).
  \label{eq:part-iii-total-cohomology-arbitrary-base-change}
  \end{equation}
  At a geometric point \(s\to S\),
  \begin{equation}
   \mathcal H_S^q\otimes\kappa(s)
   \simeq
   H_{\mathrm{dR}}^q(\mathfrak U_s,\mathcal W_s).
  \label{eq:part-iii-total-bundle-geometric-fibre}
  \end{equation}

\item The sheaf
  \(\mathcal H^q(\mathcal R_S^\lambda)\) is finite locally free and
  commutes with arbitrary finite-type base change.  Its map to
  \(\mathcal H_S^q\) is injective, with image
  \(\mathcal I_\lambda^q\), and this image is a subbundle with locally
  free quotient.  Its geometric fibre is
  \[
   \mathcal I_\lambda^q\otimes\kappa(s)
   \simeq
   F_{\mathrm{irr}}^\lambda
   H_{\mathrm{dR}}^q(\mathfrak U_s,\mathcal W_s).
  \]

\item The sheaf \(\mathcal E_\lambda^q\) is finite locally free,
  commutes with arbitrary finite-type base change, and has geometric
  fibre
  \[
   \mathcal E_\lambda^q\otimes\kappa(s)
   \simeq
   \Gr_{F_{\mathrm{irr}}}^{\lambda}
   H_{\mathrm{dR}}^q(\mathfrak U_s,\mathcal W_s).
  \]
  The distinguished triangle
  \eqref{eq:part-iii-rational-graded-triangle} gives a short
  exact sequence of vector bundles
  \begin{equation}
   0\longrightarrow
   \mathcal H^q\!\left(
    R\pi_{*,S}\mathcal C_S^{>\lambda,\bullet}
   \right)
   \longrightarrow
   \mathcal H^q(\mathcal R_S^\lambda)
   \longrightarrow
   \mathcal E_\lambda^q
   \longrightarrow0.
  \label{eq:part-iii-relative-filtered-bundle-short-exact}
  \end{equation}
  If
  \(\mathcal C_S^{>\lambda,\bullet}
   =\mathcal C_S^{\lambda,\bullet}\), then
  \(\mathcal E_\lambda^q=0\).

\item The filtered subbundles satisfy Griffiths transversality:
  \begin{equation}
   \nabla_S^q(\mathcal I_\lambda^q)
   \subseteq
   \Omega_{S/\mathbb C}^1\otimes\mathcal I_{\lambda-1}^q.
  \label{eq:part-iii-relative-griffiths-transversality}
  \end{equation}
  If \(g\colon T\to S\) is smooth, the base-change identification
  \(g^*\mathcal H_S^q\simeq\mathcal H_T^q\) carries the connection
  of the base-changed family to
  \begin{equation}
   g^*\mathcal H_S^q
   \xrightarrow{g^*\nabla_S^q}
   g^*\Omega_{S/\mathbb C}^1\otimes g^*\mathcal H_S^q
   \xrightarrow{dg\otimes1}
   \Omega_{T/\mathbb C}^1\otimes\mathcal H_T^q.
  \label{eq:part-iii-connection-smooth-pullback}
  \end{equation}

\item All total, filtered, and rational graded dimensions are
  independent of \(s\in S\).  Equivalently, for
  \(\alpha\in\mathbb Q\cap[0,1)\) and \(p,q\in\mathbb Z\), the
  irregular Hodge numbers
  \begin{equation}
   h_\alpha^{p,q}(\mathfrak U_s,\mathcal W_s)
   :=
   \dim_{\mathbb C}
   \operatorname{gr}_{F_{\alpha,\mathrm{irr}}}^{p}
   H_{\mathrm{dR}}^{p+q}(\mathfrak U_s,\mathcal W_s),
   \qquad
   F_{\alpha,\mathrm{irr}}^p
   :=F_{\mathrm{irr}}^{p-\alpha},
  \label{eq:part-iii-irregular-hodge-number-convention}
  \end{equation}
  are independent of \(s\).
\end{enumerate}
Together with
\cref{prop:part-iii-yu-scalar-perfect-canonical-fibres}, these are all
the conclusions stated in
\cref{thm:introduction-fixed-pole-orders}.
\end{theorem}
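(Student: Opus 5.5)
The plan is to apply \cref{thm:katz-oda-perfectness-exactness} to the filtered absolute de Rham dg module \((\mathcal A_S^\bullet,\mathbf D,F)\). By \cref{prop:smooth-family-supplies-flat-yu-package}(iv) this module is relatively \(S\)-flat and Katz--Oda-compatible. To set up the indexing, let \(\Sigma\subset\mathbb Q\) be the union of all lattice jumps of the relative Yu filtration. By the proof of \cref{prop:part-iii-nonpositive-transitions-isomorphisms}(iii), \(\Sigma\) consists of integers together with numbers \(a-j/e_i\). Let \(N\) be a common denominator of the \(e_i\). I would take the sequence \(\lambda_i:=i/N\), \(i\in\mathbb Z\). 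It is strictly increasing, cofinal toward \(-\infty\), and stable under \(\lambda\mapsto\lambda-1\). Every jump lies in \(\frac1N\mathbb Z\), so \(\mathcal C_S^{>\lambda_i,\bullet}=\mathcal C_S^{\lambda_{i+1},\bullet}\) and \(\mathcal G_i=\mathcal G_S^{\lambda_i}\). For \(\lambda\) not in this lattice, \(\mathcal C_S^{>\lambda}=\mathcal C_S^\lambda\), so \(\mathcal G_S^\lambda=0\). Moreover \(\mathcal R_S^\lambda=\mathcal R_i\) for the smallest \(\lambda_i\ge\lambda\).

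Next I would verify the two hypotheses. For (i): if \(\lambda_i>n\), both lattices vanish, so \(\mathcal G_i=0\). If \(\lambda_{i+1}\le0\), \cref{prop:part-iii-nonpositive-transitions-isomorphisms}(i) makes \(\mathcal R_{i+1}\to\mathcal R_i\) an isomorphism, by two-out-of-three applied to \(\tau_{\lambda_i}=\tau\circ\tau_{\lambda_{i+1}}\). Hence only finitely many \(\mathcal G_i\) are nonzero. For (ii): by \cref{prop:part-iii-yu-scalar-perfect-canonical-fibres}(iii), the fibre transition \(H^b(Li_s^*\mathcal R_{i+1})\to H^b(Li_s^*\mathcal R_i)\) is the map on \(\mathbb H^b\) of fibrewise Yu lattices. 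Both sides inject compatibly into \(H_{\mathrm{dR}}^b(\mathfrak U_s,\mathcal W_s)\) by item (iv) of that proposition, so the transition is injective.

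The theorem then yields local freeness, base change and the short exact sequences. I would translate its conclusions as follows.
\begin{enumerate}[label=\textup{(\roman*)}]
\item Part (a) gives that \(\mathcal H_S^q\) is finite locally free with the connection of \cref{prop:abstract-shifted-katz-oda}. Its fibre is identified via \cref{prop:part-iii-nonpositive-transitions-isomorphisms}(ii) and cohomology base change.
\item Part (c) gives the subbundle \(\mathcal I_\lambda^q\). Its fibre is \(F_{\mathrm{irr}}^\lambda\) because base change of the inclusion \(\mathcal H^q(\mathcal R_i)\hookrightarrow\mathcal H_S^q\) is the fibrewise restriction map, whose image is identified by \cref{prop:part-iii-yu-scalar-perfect-canonical-fibres}(iv).
\item Part (b) gives \eqref{eq:part-iii-relative-filtered-bundle-short-exact}, and the fibre is identified by \eqref{eq:part-iii-graded-perfect-fibre-canonical-graded}. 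The vanishing when \(\mathcal C^{>\lambda}=\mathcal C^\lambda\) is clear, since the cone is then zero.
\item Transversality is \eqref{eq:abstract-griffiths-transversality}.
\item Constancy is part (d), applied to the ranks of \(\mathcal I\) and \(\mathcal E\). The numbers \(h^{p,q}_\alpha\) are ranks of \(\mathcal E_{p-\alpha}^{p+q}\).
\end{enumerate}

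The step I expect to be the main obstacle is the smooth base-change formula \eqref{eq:part-iii-connection-smooth-pullback} for the connection. That \(g^*\mathcal H_S^q\simeq\mathcal H_T^q\) is already supplied by part (a). What remains is compatibility of connecting morphisms, and the plan is to exhibit a morphism of extensions \eqref{eq:katz-oda-first-base-degree-extension}. For \(g\) smooth, \(\mathfrak X_T\to\mathfrak X\) is smooth, and pullback of absolute logarithmic forms maps \(g_{\mathfrak X}^{-1}\mathcal A_S^{\lambda}/L^2\) to \(\mathcal A_T^{\lambda}/L^2\). This map is the identity on \(\mathcal C^\lambda\) after scalar extension, and is \(dg\otimes1\) on the \(\Omega^1\)-term. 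Functoriality of connecting maps, combined with the base-change isomorphism \eqref{eq:smooth-yu-level-perfect-base-change}, then gives the formula at level \(\lambda\). Passing to the colimit via \eqref{eq:abstract-shifted-katz-oda-compatibility} gives it for \(\nabla_S^q\). The care needed is in checking that the derived pullback of the triangle matches the pulled-back sequence. This holds because all terms are \(S\)-flat, so termwise pullback computes \(Lg_{\mathrm{sc}}^*\).
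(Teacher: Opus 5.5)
Your proof is correct and takes essentially the paper's route. You apply \cref{thm:katz-oda-perfectness-exactness} to the absolute Yu dg module. Finite support comes from vanishing above \(n\) together with \cref{prop:part-iii-nonpositive-transitions-isomorphisms}(i), and fibrewise injectivity from the compatible restriction injections of \cref{prop:part-iii-yu-scalar-perfect-canonical-fibres}(iv). The smooth-pullback formula comes from a morphism of first Katz--Oda extensions.

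The only structural difference is the indexing. The paper fixes \(\lambda\), picks \(\mu>\lambda\) with \(\mathcal C_S^{>\lambda,\bullet}=\mathcal C_S^{\mu,\bullet}\), enumerates \(\mathbb Z\cup(\lambda+\mathbb Z)\cup(\mu+\mathbb Z)\), and applies the theorem once per level. Your uniform lattice \(\tfrac{1}{N}\mathbb Z\) also works, because the fixed pole orders put every jump there. One application then covers all lattice levels. A non-lattice \(\lambda\) reduces to the next lattice point \(\lambda_i\), and this holds for transversality too, since \(\lambda_i-1\) is then the next lattice point above \(\lambda-1\).

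There is one slip in (v). With the convention \eqref{eq:part-iii-irregular-hodge-number-convention}, \(\operatorname{gr}^p_{F_{\alpha,\mathrm{irr}}}=F_{\mathrm{irr}}^{p-\alpha}/F_{\mathrm{irr}}^{p+1-\alpha}\), which is not the rational piece \(\Gr^{p-\alpha}_{F_{\mathrm{irr}}}\). So \(h^{p,q}_\alpha\) is not the rank of \(\mathcal E^{p+q}_{p-\alpha}\). It is the sum of the ranks of \(\mathcal E^{p+q}_\mu\) over the lattice points \(\mu\in[p-\alpha,p+1-\alpha)\), equivalently \(\operatorname{rank}\mathcal I^{p+q}_{p-\alpha}-\operatorname{rank}\mathcal I^{p+q}_{p+1-\alpha}\). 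Constancy still follows.

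In the smooth-pullback step, note that \(\mathcal A_S^{\lambda,\bullet}/L^2\mathcal A_S^{\lambda,\bullet}\) is only a complex of \(\mathbb C\)-vector spaces, so that triangle cannot be derived-pulled back in \(D(\mathcal O_S)\). Instead, compare the connecting maps on \(g^{-1}\mathcal H_S^q\). Then extend to \(g^*\mathcal H_S^q=\mathcal O_T\otimes g^{-1}\mathcal H_S^q\) using the Leibniz rule \eqref{eq:katz-oda-limiting-Leibniz}.
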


\begin{proof}
Fix \(\lambda\in\mathbb Q\).  By local finiteness of the lattice
jumps, choose
\(\mu>\lambda\), sufficiently close to \(\lambda\), such that
\(\mathcal C_S^{>\lambda,\bullet}
=\mathcal C_S^{\mu,\bullet}\) and the interval
\((\lambda,\mu)\) contains no integer.  Increasingly enumerate the
discrete set
\[
 \mathbb Z\cup(\lambda+\mathbb Z)\cup(\mu+\mathbb Z).
\]
This sequence is cofinal toward \(-\infty\) and stable under
translation by \(-1\); moreover, \(\lambda\) and \(\mu\) are
consecutive terms.  The corresponding Yu lattices vanish for all
sufficiently large indices, while all transitions between
nonpositive levels are isomorphisms by
\cref{prop:part-iii-nonpositive-transitions-isomorphisms}.  Since the
sequence has only finitely many terms in \([0,n]\), all but finitely
many transition cones are acyclic.  Item~\textup{(iv)} of
\cref{prop:part-iii-yu-scalar-perfect-canonical-fibres} identifies
the fibrewise transitions with inclusions between canonical
irregular Hodge levels.  Together with the derived fibre
identifications in
\cref{lem:finite-support-s-linear-system}, this verifies
hypothesis~\textup{(ii)} of
\cref{thm:katz-oda-perfectness-exactness}; the preceding finite-support
argument verifies hypothesis~\textup{(i)}.

Apply that theorem once to the relatively \(S\)-flat,
Katz--Oda-compatible filtered absolute Yu dg module of
\cref{prop:yu-katz-oda-compatible-de-rham-module}.  It
gives the local freeness, ordinary cohomology base change, short exact
transition sequences, subbundle assertions, the canonical integrable
connection, and Griffiths transversality in \textup{(i)--(iv)}, apart
from the smooth-pullback assertion.  The total fibre identification is
\eqref{eq:part-iii-limiting-twisted-de-rham-fibre}; the filtered
fibre identification follows from item~\textup{(iv)} of
\cref{prop:part-iii-yu-scalar-perfect-canonical-fibres}; and the
graded fibre identification follows from item~\textup{(v)} of the
same proposition.

Because the chosen sequence contains \(\lambda\), \(\lambda-1\), and
the chosen \(\mu>\lambda\) satisfying
\(\mathcal C_S^{>\lambda,\bullet}
=\mathcal C_S^{\mu,\bullet}\), the preceding
conclusions give all assertions for the originally fixed
\(\lambda\), including
\eqref{eq:part-iii-relative-filtered-bundle-short-exact} and
Griffiths transversality.  Since \(\lambda\) was arbitrary, they hold
at every rational level.  If
\(\mathcal C_S^{>\lambda,\bullet}
=\mathcal C_S^{\lambda,\bullet}\), the two terms defining
\(\mathcal G_S^\lambda\) coincide, so this cone is acyclic.

It remains to check smooth pullback of the connection.  For smooth
\(g\), flatness turns the derived base-change isomorphism into
\(g^*\mathcal H_S^q\simeq\mathcal H_T^q\).  The natural morphisms of
absolute de Rham dg algebras, together with the pullback of
\((s_{\mathfrak P},s_{\mathcal W})\), give a morphism from the
pulled-back absolute Yu object to the object on
\(\mathfrak X_T/T\).  Its degree-zero quotient for the Katz--Oda
filtration is
\eqref{eq:part-iii-level-arbitrary-base-change}, while on base
one-forms it is \(dg\).  Functoriality of the connecting morphisms in
\cref{prop:abstract-shifted-katz-oda}, followed by passage to the
homotopy colimit, gives
\eqref{eq:part-iii-connection-smooth-pullback}.

Finally, the dimensions in \textup{(v)} are ranks of the vector
bundles just constructed and hence are constant because \(S\) is
connected.  The quotient defining
\(\operatorname{gr}_{F_{\alpha,\mathrm{irr}}}^{p}\) has a finite
filtration with successive quotients the rational graded pieces whose
indices lie in
\([p-\alpha,p+1-\alpha)\).  Its dimension is the sum of their
constant ranks, proving the last assertion.
\end{proof}

\begin{remark}[The \(D\)-module perspective]
\label{subsec:part-iii-d-module-interpretation-audit}
\label{rem:part-iii-three-d-module-levels}
\label{rem:part-iii-d-module-interpretation-filtered-gap}
\label{rem:part-iii-no-hidden-stacky-irregular-direct-image}
\label{rem:part-iii-stacky-input-not-eliminated}
For smooth varieties admitting a projective good compactification,
the theorem can alternatively be obtained from the strict filtered
direct image of the exponential \(D\)-module and its comparison with
the Yu filtration \cite{SabbahYu,SabbahIHT}.

The stacky proof in this paper instead uses the explicit absolute and
relative Yu dg modules.  It invokes no relative irregular filtered
\(D\)-module or filtered-direct-image strictness theorem on a stack.
Its only stacky irregular Hodge inputs are the two companion results
on each geometric fibre: identification with the canonical
filtration and \(E_1\)-degeneration.  Thus the relative \(D\)-module
package is bypassed, whereas these fibrewise inputs remain essential.
\end{remark}

\subsection{Inertia sectors and orbifold bundles}
\label{subsec:general-relative-inertia-orbifold-bundles}

We now apply the theorem for families with fixed pole orders to the
relative inertia stack.

Let
\[
 e\colon I_S\mathfrak X\longrightarrow\mathfrak X
\]
be the relative inertia stack.  Since \(S\) is a scheme,
\(I_S\mathfrak X=I\mathfrak X\).  The inertia stack carries the
tautological automorphism \(g_{\mathrm{univ}}\) of every object pulled
back along \(e\).  Write
\begin{equation}
 I_S\mathfrak X
 =\coprod_{\gamma\in\mathcal B^{\mathrm c}}
   \mathfrak X_\gamma
\label{eq:general-compactified-inertia-components}
\end{equation}
for its finitely many connected components, and retain in
\(\mathcal B\subseteq\mathcal B^{\mathrm c}\) precisely those for
which
\[
 \mathfrak U_\gamma
 :=\mathfrak X_\gamma\times_{\mathfrak X}\mathfrak U
\]
is nonempty.  We call \(\mathfrak U_\gamma\) the open inertia sector
indexed by \(\gamma\).  Let
\(e_\gamma\colon\mathfrak X_\gamma\to\mathfrak X\) be evaluation and
put
\[
 \mathfrak D_{i,\gamma}
 :=\mathfrak X_\gamma\times_{\mathfrak X}\mathfrak D_i,
 \qquad
 \mathfrak D_\gamma
 :=(\mathfrak X_\gamma\setminus\mathfrak U_\gamma)_{\mathrm{red}},
 \qquad
 \mathcal W_\gamma:=e_\gamma^*\mathcal W.
\]
Empty restrictions \(\mathfrak D_{i,\gamma}\) are omitted.

\begin{lemma}[Polar geometry of inertia sectors]
\label{lem:general-relative-inertia-geometry}
For every \(\gamma\in\mathcal B\), the following assertions hold.
\begin{enumerate}[label=\textup{(\roman*)}]
\item The morphism
  \(\pi_\gamma\colon\mathfrak X_\gamma\to S\) is proper and smooth,
  of pure relative dimension, and its geometric fibres have
  projective coarse moduli spaces.
\item The nonempty divisors \(\mathfrak D_{i,\gamma}\) form a
  relative labelled SNC presentation of \(\mathfrak D_\gamma\).
\item For every geometric point \(s\to S\), each connected component
  of \(\mathfrak X_{\gamma,s}\), equipped with the reduced boundary
  and the relatively prime zero and pole divisors in
  \(\operatorname{div}(\mathcal W_{\gamma,s})\), is an NC rational
  compactification of
  its open sector.  If the restricted potential is zero, both
  divisors are taken to be zero.
\end{enumerate}
\end{lemma}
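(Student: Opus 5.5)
The plan is to reduce all three assertions to étale-local statements on \(\mathfrak X\), using the standard local structure of inertia of a tame Deligne--Mumford stack. Étale locally on \(\mathfrak X\) near a point with stabilizer \(G\), write \(\mathfrak X\simeq[V/G]\) with \(V\to S\) smooth. Then the inertia is the disjoint union over conjugacy classes \((g)\) of \([V^g/C(g)]\). Tameness gives that \(G\) is linearly reductive, so \(g\) acts semisimply on tangent spaces. Hence \(V^g\) is smooth over \(S\): the fixed locus of a linearly reductive group acting on a smooth \(S\)-scheme is smooth, fibrewise and then by flatness via the fibrewise criterion. So each \(\mathfrak X_\gamma\to S\) is smooth. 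Components of different relative dimension are separated because the dimension is locally constant, which gives pure relative dimension. Properness comes from the fact that \(I\mathfrak X\to\mathfrak X\) is finite for a separated Deligne--Mumford stack. Projectivity of the coarse space of each geometric fibre follows because \(I\mathfrak X_s\to\mathfrak X_s\) is finite, so the induced map of coarse spaces is finite to a projective scheme. This proves (i), after noting that \(I_S\mathfrak X\times_S s=I\mathfrak X_s\).

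For (ii), I would work in relative SNC coordinates adapted to the group action. Since \(G\) preserves each labelled \(\mathfrak D_i\), and labels forbid permutations, one can choose linearized coordinates \(x_1,\dots,x_r,y\) in which \(g\) acts diagonally and each branch is \(x_i=0\) with \(x_i\) a \(g\)-eigenfunction. A branch \(\mathfrak D_i\) meets \(V^g\) nontrivially only when its eigenvalue on \(x_i\) is \(1\). Otherwise \(x_i\) vanishes on \(V^g\), so \(V^g\subset\mathfrak D_i\). Since \(\gamma\in\mathcal B\) has \(\mathfrak U_\gamma\neq\emptyset\), a component of \(V^g\) contained in some \(\mathfrak D_i\) cannot occur. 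Then \(V^g\) is the coordinate subspace cut out by the non-fixed coordinates among the \(y\)'s, and the restrictions \(x_i|_{V^g}\) are part of a relative coordinate system. Thus \(\mathfrak D_{i,\gamma}\) are smooth relative Cartier divisors with transversal intersections, and their union is \(\mathfrak D_\gamma\) set-theoretically. This gives the labelled SNC presentation.

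For (iii), restrict the local normal form of \cref{def:good-rational-stack-compactification} on the fibre. On \(\mathfrak X_s\) near the polar locus, \(\mathcal W_s\) is \(x^{-e}\) or \(z\,x^{-e}\) with \(\mathscr Z+\mathscr D\) reduced NC. The zero divisor \(z\) is \(g\)-invariant, since \(\mathcal W\) and the \(x_i\) are invariant up to eigenvalue-one scaling (monomials with nontrivial eigenvalues already restrict as above). So \(z\) can be included among the linearized coordinates, and the same eigenvalue argument shows that \(z|_{V^g}\) is either a coordinate, giving a smooth transverse zero divisor, or vanishes identically. In the latter case \(\mathcal W_\gamma\equiv0\) on that component, the degenerate convention.

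The main obstacle is exactly this point: choosing coordinates that simultaneously linearize the \(g\)-action and put \(\mathcal W\) in normal form, in particular handling \(z\) with nontrivial eigenvalue. I would first try to average with the linearly reductive group to make \(z\) an eigenfunction, and then check that the pole orders on \(\mathfrak D_{i,\gamma}\) remain \(e_i\).
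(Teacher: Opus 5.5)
Your arguments for (i) and (ii) follow the paper's proof. You use the local fixed loci $V^g$, finiteness of the inertia for properness and for projectivity of the coarse fibres, and triviality of the normal characters of the labelled branches, which is forced by $\mathfrak U_\gamma\neq\varnothing$.

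Two small repairs are needed.
\begin{itemize}
\item In (ii), say explicitly that $\mathfrak X_\gamma$ is smooth and connected, hence irreducible. That is what makes $V^g\subseteq\mathfrak D_i$ near a single point force $\mathfrak X_\gamma\subseteq\mathfrak D_i$.
\item In (i), the phrase ``fibrewise and then by flatness via the fibrewise criterion'' is circular, because flatness of $V^g\to S$ is exactly what is at stake. Either cite smoothness of fixed loci of a finite group of invertible order acting on a smooth $S$-scheme, or argue as the paper does. There, $V^g$ is smooth over $\mathbb C$ with tangent bundle $(T_V|_{V^g})^g$. Taking $g$-invariants in $0\to T_{V/S}\to T_V\to\pi^*T_S\to0$, where $g$ acts trivially on the last term, shows that $T_{V^g}\to T_S$ is surjective.
\end{itemize}

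For (iii) your route genuinely differs from the paper's. The paper regards the fibre as a strict NC rational compactification and quotes the fixed-locus lemma of the first companion paper (its Lemma~3.16(i)). This is brief and uniform with that paper. Because the quoted lemma is general, however, the paper only records that the sector polar divisor is $\operatorname{div}_\infty(\mathcal W_{\gamma,s})$, and allows it to be smaller than $e_{\gamma,s}^*\mathfrak P_s$.

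You compute directly instead, and the ``main obstacle'' you name does not exist. By (ii), every boundary equation $x_i$ through a point $v$ of the fixed locus can be taken $g$-invariant, and $\mathcal W$ is invariant. So $z:=\mathcal W\prod_i x_i^{e_i}$ is already an invariant local equation of $\mathscr Z$ (or a unit), and no averaging is needed. The covectors $dz$ and the $dx_i$ are linearly independent and invariant at $v$. Restriction $T_v^*V_s\to T_v^*V_s^g$ is injective on invariant covectors, so $z|_{V^g}$ and the $x_i|_{V^g}$ form part of a coordinate system.

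Consequently:
\begin{itemize}
\item Your alternative ``$z|_{V^g}$ vanishes identically'' never occurs near the polar locus.
\item The normal form restricts with the same exponents $e_i$, and no zero--pole cancellation happens there.
\item The degenerate convention is needed only on components of $\mathfrak X_{\gamma,s}$ that miss the polar locus. There $\mathcal W_{\gamma,s}$ is regular on a proper connected stack and therefore constant.
\end{itemize}

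So your approach gives a self-contained proof with a sharper statement about the polar multiplicities, at the cost of redoing locally what the paper imports.
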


\begin{proof}
Because \(\mathfrak X\) is separated Deligne--Mumford, its inertia
is finite over \(\mathfrak X\).  It is therefore proper over \(S\)
and has only finitely many connected components.  Smooth-locally on
\(\mathfrak X\), a component of the inertia is represented by a
fixed locus \(V^g\), where \(V\to S\) is smooth and \(g\) has finite
order.  In characteristic zero \(V^g\) is smooth, and its tangent
bundle is the invariant part of \(T_{V/\mathbb C}|_{V^g}\).  Taking
\(g\)-invariants in the relative tangent sequence is exact because
\(g\) is linearly reductive, and gives
\[
 0\longrightarrow (T_{V/S}|_{V^g})^g
 \longrightarrow T_{V^g/\mathbb C}
 \longrightarrow T_{S/\mathbb C}|_{V^g}
 \longrightarrow0.
\]
The differential of \(V^g\to S\) is therefore surjective, so this
morphism, and hence \(I_S\mathfrak X\to S\), is smooth.  A nonempty
connected component
has open and closed image in the connected base \(S\), so it is
surjective and has pure relative dimension.  Finally,
\(\mathfrak X_{\gamma,s}\to\mathfrak X_s\) is finite.  Its coarse
space is finite over the projective coarse space of
\(\mathfrak X_s\), and is therefore projective.  This proves
\textup{(i)}.

We verify the boundary assertion on the same fixed-locus charts.
The tautological automorphism \(g_{\mathrm{univ}}\) preserves every labelled branch
\(\mathfrak D_i\) and acts on its normal line by a character.  If
that character were nontrivial at a point of
\(\mathfrak X_\gamma\cap\mathfrak D_i\), the local fixed locus would
be contained in \(\mathfrak D_i\).  Since
\(\mathfrak X_\gamma\) is connected and smooth, this would force
\(\mathfrak X_\gamma\subseteq\mathfrak D_i\), contrary to
\(\mathfrak U_\gamma\ne\varnothing\).  All boundary-normal
characters are therefore trivial.  Restricting relative SNC
coordinates to the fixed locus shows that every nonempty
\(\mathfrak D_{i,\gamma}\) is smooth over \(S\), and that every
intersection of \(r\) such divisors is empty or smooth over \(S\) of
relative codimension \(r\).  Their complement is
\(\mathfrak U_\gamma\), proving \textup{(ii)}.

For the last assertion, fix a geometric point \(s\to S\) and a
connected component of \(\mathfrak X_{\gamma,s}\).  The fibre
\((\mathfrak X_s,\mathfrak D_s,\mathcal W_s)\) is a strict NC
rational compactification in the sense of
\cite[Definition~2.3(a)]{WangCompactification}: split the finitely
many nonempty labelled divisors into their connected components if
necessary.  The
fixed-locus argument of
\cite[Lemma~3.16(i)]{WangCompactification} therefore applies and
gives precisely the claimed NC rational sector compactification.  Its
polar divisor is
\(\operatorname{div}_\infty(\mathcal W_{\gamma,s})\), which can be
strictly smaller than \(e_{\gamma,s}^*\mathfrak P_s\) when restriction
causes zero--pole cancellation.  This proves \textup{(iii)}.
\end{proof}

To apply the fixed-pole theorem sectorwise, impose the following
relative compactification hypothesis.

\begin{assumption}[Fixed-pole compactifications of inertia sectors]
\label{ass:sectorwise-fixed-pole-orders}
\label{ass:general-relative-inertia-fixed-pole}
For every \(\gamma\in\mathcal B\), the open family
\((\mathfrak U_\gamma,\mathcal W_\gamma)/S\) admits compactified data
\begin{equation}
 (\widetilde{\mathfrak X}_\gamma,
  \widetilde{\mathfrak D}_\gamma,
 \widetilde{\mathfrak P}_\gamma,
 \widetilde{\mathcal W}_\gamma)/S
\label{eq:general-sector-fixed-pole-orders-family}
\end{equation}
which form a smooth family with fixed pole orders and whose open part
is identified with
\((\mathfrak U_\gamma,\mathcal W_\gamma)\).
\end{assumption}

By \cref{lem:general-relative-inertia-geometry}, one may take
\(\widetilde{\mathfrak X}_\gamma=\mathfrak X_\gamma\) whenever the
divisors
\(\operatorname{div}_\infty(\mathcal W_{\gamma,s})\) assemble with
multiplicities independent of \(s\).  The assumption also holds if
\(\mathcal W\) extends to an \(S\)-morphism
\(\mathfrak X\to\mathbb P^1_S\) whose pole divisor is
\(\mathfrak P\), because its restriction to a nonempty open sector
is again morphic with a relative Cartier pole divisor.

The restriction of \(g_{\mathrm{univ}}\) acts on
\(T_{\mathfrak X/S}|_{\mathfrak X_\gamma}\).  On an \'etale chart,
write its eigenvalues as
\(\exp(2\pi\sqrt{-1}\,\theta_1),\ldots,
\exp(2\pi\sqrt{-1}\,\theta_n)\), with
\(0\leq\theta_j<1\).  The \(\theta_j\) are locally constant on the
connected component, and we define
\begin{equation}
 a_\gamma
 :=\operatorname{age}(\mathfrak U_\gamma)
 :=\sum_{j=1}^n\theta_j
 \in\mathbb Q_{\geq0}.
\label{eq:general-relative-sector-age}
\end{equation}
For \(k\in\mathbb Z\) and \(\mu\in\mathbb Q\), apply
\cref{thm:irregular-hodge-bundles-fixed-pole-orders} to
\eqref{eq:general-sector-fixed-pole-orders-family}, and denote the resulting
total, filtered, and rational graded bundles by
\[
 \mathcal H_\gamma^k,
 \qquad
 \mathcal I_{\gamma,\mu}^k,
 \qquad
 \mathcal E_{\gamma,\mu}^k.
\]

For \(r,\lambda\in\mathbb Q\), set
\begin{align}
 \mathcal H_{\mathrm{orb}}^r
 &:={}
 \bigoplus_{\substack{\gamma\in\mathcal B\\
              r-2a_\gamma\in\mathbb Z}}
 \mathcal H_\gamma^{\,r-2a_\gamma},
\label{eq:general-orbifold-total-bundle}\\
 \mathcal I_{\mathrm{orb},\lambda}^r
 &:={}
 \bigoplus_{\substack{\gamma\in\mathcal B\\
              r-2a_\gamma\in\mathbb Z}}
 \mathcal I_{\gamma,\lambda-a_\gamma}^{\,r-2a_\gamma},
\label{eq:general-orbifold-filtered-bundle}\\
 \mathcal E_{\mathrm{orb},\lambda}^r
 &:={}
 \bigoplus_{\substack{\gamma\in\mathcal B\\
              r-2a_\gamma\in\mathbb Z}}
 \mathcal E_{\gamma,\lambda-a_\gamma}^{\,r-2a_\gamma}.
\label{eq:general-orbifold-graded-bundle}
\end{align}
On a geometric fibre \(s\to S\), these conventions amount to
\begin{align*}
 H_{\mathrm{orb,dR}}^r(\mathfrak U_s,\mathcal W_s)
 &:={}
 \bigoplus_{\substack{\gamma\in\mathcal B\\
              r-2a_\gamma\in\mathbb Z}}
 H_{\mathrm{dR}}^{\,r-2a_\gamma}
 (\mathfrak U_{\gamma,s},\mathcal W_{\gamma,s}),\\
 F_{\mathrm{orb}}^\lambda
 H_{\mathrm{orb,dR}}^r(\mathfrak U_s,\mathcal W_s)
 &:={}
 \bigoplus_{\substack{\gamma\in\mathcal B\\
              r-2a_\gamma\in\mathbb Z}}
 F_{\mathrm{irr}}^{\,\lambda-a_\gamma}
 H_{\mathrm{dR}}^{\,r-2a_\gamma}
 (\mathfrak U_{\gamma,s},\mathcal W_{\gamma,s}).
\end{align*}

\begin{theorem}[Orbifold irregular Hodge bundles]
\label{thm:orbifold-bundles-fixed-pole-orders}
\label{thm:general-orbifold-fixed-pole-bundles}
Let a smooth family with fixed pole orders satisfy
\cref{ass:sectorwise-fixed-pole-orders}.  Then, for every
\(r,\lambda\in\mathbb Q\), the following assertions hold.
\begin{enumerate}[label=\textup{(\roman*)}]
\item The sheaf \(\mathcal H_{\mathrm{orb}}^r\) is finite locally
  free, and \(\mathcal I_{\mathrm{orb},\lambda}^r\) is a subbundle
  with locally free quotient.  The rational graded quotient is
  \(\mathcal E_{\mathrm{orb},\lambda}^r\).
\item Their fibres at \(s\to S\) are canonically
  \[
   H_{\mathrm{orb,dR}}^r(\mathfrak U_s,\mathcal W_s),
   \qquad
   F_{\mathrm{orb}}^\lambda
   H_{\mathrm{orb,dR}}^r(\mathfrak U_s,\mathcal W_s),
   \qquad
   \Gr_{F_{\mathrm{orb}}}^{\lambda}
   H_{\mathrm{orb,dR}}^r(\mathfrak U_s,\mathcal W_s).
  \]
  The bundles, inclusions, quotients, and these fibre
  identifications commute with arbitrary finite-type base change.
\item The direct-sum connection is algebraic and integrable and
  satisfies
  \begin{equation}
   \nabla_{\mathrm{orb}}^r
   (\mathcal I_{\mathrm{orb},\lambda}^r)
   \subseteq
   \Omega_{S/\mathbb C}^1\otimes
   \mathcal I_{\mathrm{orb},\lambda-1}^r.
  \label{eq:general-orbifold-griffiths-transversality}
  \end{equation}
  It is functorial under smooth base change.
\item The orbifold irregular Hodge numbers
  \begin{equation}
   h_{\mathrm{orb}}^{r,\lambda}
   (\mathfrak U_s,\mathcal W_s)
   :=\dim_{\mathbb C}
   \Gr_{F_{\mathrm{orb}}}^{\lambda}
   H_{\mathrm{orb,dR}}^r(\mathfrak U_s,\mathcal W_s)
  \label{eq:general-orbifold-irregular-hodge-numbers}
  \end{equation}
  are independent of \(s\in S\).  Equivalently, the Harder--Lee
  indexing
  \[
   f_{\mathrm{orb}}^{\lambda,\mu}
   :=\dim_{\mathbb C}
   \Gr_{F_{\mathrm{orb}}}^{\lambda}
   H_{\mathrm{orb,dR}}^{\lambda+\mu}
  \]
  is constant in the family.
\end{enumerate}
\end{theorem}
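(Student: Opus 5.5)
The plan is to reduce everything to \cref{thm:irregular-hodge-bundles-fixed-pole-orders}, applied sector by sector, and then take finite direct sums. By \cref{ass:sectorwise-fixed-pole-orders}, each open sector \((\mathfrak U_\gamma,\mathcal W_\gamma)/S\) with \(\gamma\in\mathcal B\) has compactified data \eqref{eq:general-sector-fixed-pole-orders-family}. These data form a smooth family with fixed pole orders over the same connected base \(S\). The theorem therefore supplies, for every integer \(k\) and rational \(\mu\), the following: a finite locally free \(\mathcal H_\gamma^k\) with an integrable connection \(\nabla_\gamma^k\); subbundles \(\mathcal I_{\gamma,\mu}^k\) with locally free quotient; graded bundles \(\mathcal E_{\gamma,\mu}^k\); and the fibre identifications with \(H_{\mathrm{dR}}^k\), \(F_{\mathrm{irr}}^\mu\) and \(\Gr^\mu_{F_{\mathrm{irr}}}\) of \((\mathfrak U_{\gamma,s},\mathcal W_{\gamma,s})\). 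All of these commute with finite-type base change, and the connection satisfies shifted transversality and smooth-pullback functoriality. Since \(\mathcal B\) is finite, each of \eqref{eq:general-orbifold-total-bundle}--\eqref{eq:general-orbifold-graded-bundle} is a finite direct sum. I would check first that the age \(a_\gamma\) is constant on the connected component \(\mathfrak X_\gamma\); this holds because the eigenvalues \(\theta_j\) are locally constant. The index shifts \(r\mapsto r-2a_\gamma\) and \(\lambda\mapsto\lambda-a_\gamma\) are then independent of \(s\), and the condition \(r-2a_\gamma\in\mathbb Z\) selects the same set of \(\gamma\) at every point.

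For (i) and (ii), direct sums preserve local freeness and split inclusions, so \(\mathcal I_{\mathrm{orb},\lambda}^r\) is a subbundle of \(\mathcal H_{\mathrm{orb}}^r\) with quotient \(\bigoplus_\gamma\mathcal H_\gamma/\mathcal I_{\gamma,\lambda-a_\gamma}\), which is locally free. For the graded quotient, I would form \(\mathcal I^r_{\mathrm{orb},>\lambda}:=\bigcup_{\mu>\lambda}\mathcal I^r_{\mathrm{orb},\mu}\). Because the lattice jumps are locally finite, in each summand this union is attained at a single level \(\mu>\lambda\) close to \(\lambda\), uniformly over \(S\). Sectorwise, the sequence \eqref{eq:part-iii-relative-filtered-bundle-short-exact}, transported into \(\mathcal H_\gamma\) via the injection of part (ii) of the sector theorem, identifies \(\mathcal I_{\gamma,\lambda-a_\gamma}/\mathcal I_{\gamma,>\lambda-a_\gamma}\) with \(\mathcal E_{\gamma,\lambda-a_\gamma}\). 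The translation \(\mu\mapsto\mu-a_\gamma\) preserves "immediately above", so summing gives \(\mathcal E^r_{\mathrm{orb},\lambda}\). The fibre identifications and their compatibility with base change are then the sectorwise ones, summed. The only point needing care is that the base change of the sector data is the sector data of the base-changed family. I would take this to be the meaning of base change in \cref{ass:sectorwise-fixed-pole-orders}: pull back the compactified sector family, whose open part is the pulled-back open sector, because the inertia commutes with base change.

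For (iii), define \(\nabla_{\mathrm{orb}}^r:=\bigoplus_\gamma\nabla_\gamma^{r-2a_\gamma}\). Integrability and algebraicity hold summandwise. Transversality follows from \eqref{eq:part-iii-relative-griffiths-transversality} applied with \(\mu=\lambda-a_\gamma\), since \((\lambda-1)-a_\gamma=\mu-1\). Smooth-pullback functoriality follows summandwise from \eqref{eq:part-iii-connection-smooth-pullback}. For (iv), \(h_{\mathrm{orb}}^{r,\lambda}\) is the rank of \(\mathcal E^r_{\mathrm{orb},\lambda}\), hence constant on the connected base \(S\); setting \(r=\lambda+\mu\) gives the Harder--Lee indexing.

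I expect the main obstacle to be the rational graded quotient in (i). The filtrations of different sectors are shifted by different, generally non-integral, ages, so the orbifold filtration \(F_{\mathrm{orb}}\) has the union of all shifted jump sets. One must make sure that \(\Gr_{F_{\mathrm{orb}}}^\lambda\) is computed sectorwise at the shifted level, and does not mix contributions from levels of different fractional part. This is resolved by the directness of the sum: the quotient of direct sums is the direct sum of the quotients. Keeping this in view means no comparison between sectors is ever required, in line with \cref{rem:fractional-parameters-levelwise-comparison}.
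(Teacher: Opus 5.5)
Your proposal is correct and takes essentially the same route as the paper. Apply the fixed-pole theorem to each sector family supplied by the assumption, form the finite age-shifted direct sums (which preserve local freeness, exactness, integrability and base change), shift transversality by $a_\gamma$, and read off constancy from the ranks. On base change, the paper additionally notes explicitly that a connected sector may split into several components after base change, and that the sum of their cohomology bundles is still the pullback of the original sector bundle; your description of base change handles this point implicitly.
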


\begin{proof}
Apply \cref{thm:irregular-hodge-bundles-fixed-pole-orders} to the finitely
many sector families supplied by
\cref{ass:sectorwise-fixed-pole-orders}, and take the
age-shifted finite direct sums.  Finite direct sums preserve local
freeness, exactness, integrability, and arbitrary base change.  The
sectorwise rational graded quotients give
\eqref{eq:general-orbifold-graded-bundle}, and their fibre
identifications give the three fibre descriptions in \textup{(ii)}.

For transversality, the \(\gamma\)-summand satisfies
\[
 \nabla_\gamma
 (\mathcal I_{\gamma,\lambda-a_\gamma})
 \subseteq
 \Omega_{S/\mathbb C}^1\otimes
 \mathcal I_{\gamma,(\lambda-a_\gamma)-1}
 =
 \Omega_{S/\mathbb C}^1\otimes
 \mathcal I_{\gamma,(\lambda-1)-a_\gamma}.
\]
These inclusions assemble to
\eqref{eq:general-orbifold-griffiths-transversality}.  Inertia
commutes with base change; a connected component may split after
base change, but the sum of its cohomology bundles is the pullback of
the original sector bundle.  This proves all the asserted
compatibilities.  Finally,
\eqref{eq:general-orbifold-irregular-hodge-numbers} is the rank of
\(\mathcal E_{\mathrm{orb},\lambda}^r\), hence is constant because
\(S\) is connected.
\end{proof}
 \section{Toric Newton Families and Coefficient Invariance}
\label{sec:toric-newton-families}

This section supplies the toric application of the theorem for
families with fixed pole orders.

\subsection{Newton polytopes at infinity}
\label{subsec:toric-newton-polytopes-at-infinity}

We use the stacky-fan conventions of Borisov--Chen--Smith
\cite[Section~3]{BCS}.  Fix a smooth quasiprojective toric
Deligne--Mumford stack
\begin{equation}
 \mathscr U_\Gamma
 =
 \mathscr X_{(\mathbf L,\Sigma,\beta)}\times T_Q,
\label{eq:toric-fixed-open-stack}
\end{equation}
where \(\mathbf L\) is finitely generated,
\(L=\mathbf L/\mathbf L_{\mathrm{tor}}\), the simplicial fan
\(\Sigma\) is quasiprojective with rays spanning \(L_{\mathbb R}\),
the map
\(\beta\colon\mathbb Z^{\Sigma(1)}\to\mathbf L\) has finite
cokernel, \(Q\) is a free lattice, and
\(T_Q:=\operatorname{Spec}\mathbb C[Q^\vee]\).  Put
\begin{equation}
 N:=L\oplus Q,
 \qquad
 M:=N^\vee.
\label{eq:toric-character-lattices}
\end{equation}
Write \(T_N:=\operatorname{Spec}\mathbb C[M]\) for the coarse
dense torus.
The rational fan of \(\mathscr U_\Gamma\) is
\(\Gamma=\Sigma\times\{0\}\) in \(N_{\mathbb R}\).  Characters
are taken from the coarse dense torus; a finite generic gerbe does not
change \(M\).

Fix a lattice polytope \(P\subset M_{\mathbb R}\) such that
\begin{equation}
 0\in P,
 \qquad
 |\Gamma|\subseteq\operatorname{Cone}(P)^\vee.
\label{eq:toric-newton-support-condition}
\end{equation}
We refer to \eqref{eq:toric-newton-support-condition} as the
\emph{support condition} for \((\Gamma,P)\).
Thus every \(x^m\), \(m\in P\cap M\), is regular on
\(\mathscr U_\Gamma\).  Set
\begin{equation}
 V_P:=\bigoplus_{m\in P\cap M}\mathbb Cx^m,
 \qquad
 h=\sum_{m\in P\cap M}a_mx^m.
\label{eq:toric-coefficient-space}
\end{equation}
The polytope \(P\) is fixed before the coefficients; in particular,
the coefficient of \(x^0\) is allowed to vanish.  With \(A_m\) the
coordinate dual to \(x^m\), the universal potential is
\begin{equation}
 \mathcal W_P:=\sum_{m\in P\cap M}A_mx^m
 \quad\text{on}\quad
 \mathscr U_\Gamma\times V_P.
\label{eq:toric-universal-potential}
\end{equation}

For \(h\in V_P\), write
\begin{equation}
 \operatorname{Supp}(h)
 :=\{m\in P\cap M\mid a_m\ne0\},
 \qquad
 \Delta_\infty(h)
 :=\operatorname{Conv}\bigl(\{0\}\cup\operatorname{Supp}(h)\bigr).
\label{eq:toric-newton-polytope-at-infinity}
\end{equation}
Thus \(\Delta_\infty(h)\) is the Newton polytope of \(h\) at
infinity.  For a nonempty face \(F\preceq\Delta_\infty(h)\), choose
\(m_F\in F\cap M\) and put
\begin{equation}
 \begin{aligned}
  M_F&:=M\cap\operatorname{Span}_{\mathbb R}(F-F),
  &T_F&:=\operatorname{Spec}\mathbb C[M_F],\\
  h_F&:=\sum_{m\in F\cap M}a_mx^m,
  &\overline h_F&:=x^{-m_F}h_F.
 \end{aligned}
\label{eq:toric-effective-face-polynomial}
\end{equation}
The lattice \(M_F\) is saturated.  Multiplication by a character
shows that the smoothness of \(Z(\overline h_F)\subset T_F\) is
independent of \(m_F\).  Moreover, the smooth surjective torus map
\(T_N\to T_F\) gives an equivalent criterion on \(T_N\): there is no
point \(t\in T_N\) such that
\[
 h_F(t)=0,
 \qquad
 \partial_vh_F(t)=0\quad\text{for every }v\in N_\mathbb C,
\]
where \(\partial_v(x^m):=\langle v,m\rangle x^m\).

\begin{definition}[Newton nondegeneracy at infinity]
\label{def:toric-newton-nondegenerate-at-infinity}
The Laurent polynomial \(h\) is \emph{Newton nondegenerate at
infinity} if, for every face
\(F\preceq\Delta_\infty(h)\) with \(0\notin F\), the hypersurface
\[
 Z(\overline h_F)\subset T_F
\]
is empty or smooth of pure codimension one.  Define the fixed-Newton
coefficient locus
\begin{equation}
 \mathcal U_P
 :=\{h\in V_P\mid
       \Delta_\infty(h)=P
       \text{ and }h\text{ is Newton nondegenerate at infinity}\}.
\label{eq:toric-fixed-newton-nondegenerate-locus}
\end{equation}
\end{definition}

This is the standard definition of nondegeneracy with respect to the
Newton polytope at infinity; compare
\cite[\S4, equation~(23)]{Yu} and \cite[Introduction]{WangNewton}.

It is standard that \(\mathcal U_P\) is a nonempty Zariski-open dense
subset of \(V_P\).  The condition \(\Delta_\infty(h)=P\) says that
the coefficients at the vertices of \(P\) other than \(0\) do not
vanish.  On this vertex-open set, the complement of
\(\mathcal U_P\) is the finite union of the relevant
face-discriminant loci.  Applying the principal \(A\)-determinant to
the faces maximal among those avoiding \(0\) shows that this union is
proper and closed; see \cite[Chapter~10]{GKZ} and
\cite[Proposition~1.1]{SpVdB}.  In particular, \(\mathcal U_P\) is
smooth, irreducible, and connected.

\subsection{Toric compactifications with fixed pole orders}
\label{subsec:toric-fixed-pole-orders}
\label{subsec:toric-fixed-pole-package}

Define the lower support function
\begin{equation}
 \psi_P(u):=\min_{m\in P}\langle u,m\rangle,
 \qquad u\in N_{\mathbb R}.
\label{eq:toric-lower-support-function}
\end{equation}
It is an integral piecewise-linear lower-convex support function
(equivalently, a concave function in the usual analytic convention).
By
\eqref{eq:toric-newton-support-condition} and \(0\in P\),
\begin{equation}
 \left.\psi_P\right|_{|\Gamma|}=0.
\label{eq:toric-support-function-zero}
\end{equation}

\begin{proposition}[Adapted toric stack compactification]
\label{prop:toric-adapted-stack-compactification}
There is a smooth proper toric Deligne--Mumford stack
\(\overline{\mathscr X}_{\Gamma,P}\), with projective coarse moduli
space, whose rational fan \(\overline\Gamma_P\) has the following
properties:
\begin{enumerate}[label=\textup{(\roman*)}]
\item \(\mathscr U_\Gamma\) is the unchanged open toric substack,
  including its generic gerbe;
\item \(\Gamma\) is a full subfan of \(\overline\Gamma_P\);
\item \(\psi_P\) is linear on every cone of
  \(\overline\Gamma_P\).
\end{enumerate}
Consequently,
\[
 \mathscr D_{\Gamma,P}
 :=(\overline{\mathscr X}_{\Gamma,P}
       \setminus\mathscr U_\Gamma)_{\mathrm{red}}
\]
is a labelled SNC divisor.
\end{proposition}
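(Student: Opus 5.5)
The plan is to construct \(\overline\Gamma_P\) by refining a complete simplicial fan that extends \(\Gamma\) until it is compatible with the normal fan of \(P\), and then to put a stacky structure on it extending that of \(\mathscr U_\Gamma\).

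\emph{Fan level.}
\begin{enumerate}[label=\textup{(\arabic*)}]
\item Since \(\Sigma\) is quasiprojective, I first extend \(\Gamma\) (in \(N_{\mathbb R}=L_{\mathbb R}\oplus Q_{\mathbb R}\)) to a complete projective fan \(\Gamma'\) that contains \(\Gamma\) as a subfan. For this I would take a strictly convex piecewise-linear support function on \(\Gamma\) and extend it to a convex function on all of \(N_{\mathbb R}\), i.e. use a polytope whose normal fan restricts to \(\Gamma\) on \(|\Gamma|\).
\item Next I take the common refinement of \(\Gamma'\) with the normal fan of \(P\); more precisely, with the domains of linearity of \(\psi_P\). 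By \eqref{eq:toric-support-function-zero}, \(\psi_P\) vanishes on \(|\Gamma|\), so every cone of \(\Gamma\) already lies in a single domain of linearity. Hence the refinement does not subdivide cones of \(\Gamma\).
\item I then make the result simplicial by stellar subdivisions, introducing rays only in the interiors of cones outside \(|\Gamma|\). Refinement and subdivision preserve projectivity: sums of strictly convex support functions, plus small multiples of the subdivision functions, remain strictly convex. They also preserve linearity of \(\psi_P\) on each cone, which gives (iii).
\item For fullness (ii), I require that any cone all of whose rays lie in \(\Gamma(1)\) already belongs to \(\Gamma\). If this fails, I perform additional star subdivisions at interior points of the offending cones, which lie outside \(|\Gamma|\). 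This uses that \(|\Gamma|\) is itself a union of cones of the new fan.
\end{enumerate}

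\emph{Stacky structure.} I keep \(\beta\) on \(\Sigma(1)\), and for each new ray I choose its primitive generator in \(N\) and lift it to \(\mathbf L\oplus Q\). This defines a stacky fan in the sense of Borisov--Chen--Smith \cite{BCS}. The open substack attached to the subfan \(\Gamma\) is then \(\mathscr X_{(\mathbf L,\Sigma,\beta)}\times T_Q\), with the same generic gerbe, because the torsion of \(\mathbf L\) and the map \(\beta\) are unchanged; this is (i). Properness follows from completeness of the fan, and projectivity of the coarse space from projectivity of the fan.

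\emph{Boundary.} The boundary consists of the toric divisors attached to the new rays. Since these carry global labels, smoothness of the stack together with simpliciality gives the labelled SNC property: on each cone chart the boundary is a coordinate hyperplane arrangement modulo a finite group. Fullness ensures that the boundary meets \(\mathscr U_\Gamma\) in nothing.

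The main obstacle I expect is step (1) together with fullness: producing a projective completion that keeps \(\Gamma\) unchanged when \(\Sigma\) is only quasiprojective and \(Q\) adds torus directions. I would first try extending via the polyhedron whose recession cone and normal fan recover \(\Gamma\), then handle \(Q\) by taking products with a projective fan on \(Q_{\mathbb R}\), and refine afterwards.
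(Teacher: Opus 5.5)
Your steps (2)--(4) match the paper's construction. The common refinement with the domains of linearity of $\psi_P$ leaves $\Gamma$ intact: $\psi_P$ is a minimum of linear forms and is linear (zero) on each cone of $\Gamma$, so the minimizers at a relative-interior point minimize on the whole cone. Projectivity survives via $\phi+\psi_P$. Star subdivisions centred outside $|\Gamma|$ give simpliciality and fullness; the paper gets both at once by barycentrically subdividing every cone not in $\Gamma$.

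The genuine gap is step (1), which you flag but do not close. Your fallback, a polyhedron whose recession cone and normal fan recover $\Gamma$, exists only when $|\Sigma|$ is convex, because the normal fan of a polyhedron has convex support. Quasiprojectivity does not give convexity: $\mathbb P^2$ minus a torus-fixed point has a simplicial quasiprojective fan with rays spanning $\mathbb R^2$ and non-convex support. Your first formulation hides the same difficulty. The whole content of the claim is to produce a global convex function, equivalently a polytope, whose normal fan contains every cone of $\Gamma$ \emph{as a cone}, not merely inside a larger normal cone spilling out of $|\Gamma|$. Nothing in the proposal derives this from quasiprojectivity.

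The paper supplies this step with Sumihiro's equivariant embedding theorem. The quasiprojective coarse toric variety $X_\Sigma\times T_Q$ has a torus-equivariant locally closed immersion into a projective space with linear torus action. Normalizing its closure gives a projective toric variety containing $X_\Sigma\times T_Q$ as a torus-stable open subset, i.e.\ a complete projective fan containing $\Gamma$ unchanged. This handles the $Q$-directions at the same time, so no product trick is needed. In purely toric terms, take an ample invariant Cartier divisor and finitely many characters in a multiple of its polyhedron that define an immersion; their convex hull is the polytope you want.

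A smaller point concerns (i). When $Q\neq0$ the rays of $\Gamma$ do not span $N_{\mathbb R}$, so the Cox open of $\Gamma$ involves the new coordinates. Hence ``$\beta$ and the torsion of $\mathbf L$ are unchanged'' is not yet an argument. The paper takes the new stacky vectors in $\operatorname{im}(\beta)\oplus Q$. Writing such a vector as $(\beta(c),q)$ and rescaling the old Cox coordinates by monomials in the invertible new ones reduces to vectors $(0,q)$. The Gale dual then splits, and the open is visibly $\mathscr X_{(\mathbf L,\Sigma,\beta)}\times T_Q$ with its generic gerbe. With arbitrary lifts of primitive generators, you must also check that the torsion and non-$\operatorname{im}(\beta)$ components on the new rays affect the stack only along the new divisors, which miss $\mathscr U_\Gamma$.
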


\begin{proof}
Sumihiro's equivariant embedding theorem
\cite[Theorem~2.5]{Sumihiro} gives a torus-equivariant locally closed
immersion of the quasiprojective coarse toric variety into a
projective space with linear torus action.  Normalizing the closure
produces a complete projective fan \(\Delta^{\mathrm c}\) containing
\(\Gamma\) unchanged.  Refine \(\Delta^{\mathrm c}\) by the domains
of linearity of \(\psi_P\).  This common refinement does not subdivide
\(\Gamma\): if \(F_0\) is the smallest face of \(P\) containing
\(0\), write \(0\) as a strictly positive convex combination of the
vertices of \(F_0\).  For \(u\in|\Gamma|\), all their pairings with
\(u\) are nonnegative and the same convex combination is zero, so
they all vanish.  Every cone of \(\Gamma\) is consequently contained
in one domain of linearity of \(\psi_P\).

The common refinement remains projective.  If \(\phi\) is an integral
strictly convex support function for \(\Delta^{\mathrm c}\), then
\(\phi+\psi_P\) bends strictly across every wall: \(\phi\) supplies
the strict bending across an old wall, and \(\psi_P\) supplies it
across a new wall inside a cone of \(\Delta^{\mathrm c}\).  The wall
criterion and the ampleness criterion give projectivity; see
\cite[Lemma~6.1.13 and Theorem~6.1.14]{CLS}.

Perform the barycentric star subdivisions of all nonzero cones not
belonging to \(\Gamma\), in decreasing dimension, and leave
\(\Gamma\) untouched.  The resulting fan is simplicial, and every
cone outside \(\Gamma\) contains a new barycentric ray; hence
\(\Gamma\) is full.  Each subdivision is projective by
\cite[Proposition~11.1.6(c)]{CLS}, and \(\psi_P\) remains linear.

Retain the old stacky vectors.  On every new ray choose an element of
\(\operatorname{im}(\beta)\oplus Q\subset\mathbf L\oplus Q\) whose
image in \(N\) is a positive multiple of the primitive generator.
The Cox construction \cite[Section~3]{BCS} gives a smooth toric
Deligne--Mumford stack; on the Cox open belonging to \(\Gamma\), the
new coordinates are invertible and the inclusion of the new stacky
vectors in \(\operatorname{im}(\beta)\oplus Q\) identifies this open
with \(\mathscr U_\Gamma\), including its generic gerbe.  Fullness
makes the complement the union of the divisors indexed by the new
rays; they form a labelled SNC divisor on the smooth stack.
\end{proof}

For every ray \(\rho\in\overline\Gamma_P(1)\), let \(v_\rho\in N\)
be its primitive generator and write the image of its stacky vector
as
\[
 \overline b_\rho=c_\rho v_\rho,
 \qquad c_\rho\in\mathbb Z_{>0}.
\]
Let \(\mathscr D_\rho\) be the corresponding prime stack divisor and
put
\begin{equation}
 \nu_\rho(P):=\min_{m\in P}\langle v_\rho,m\rangle,
 \qquad
 e_\rho^{\mathrm{st}}(P):=-c_\rho\nu_\rho(P).
\label{eq:toric-stacky-pole-order}
\end{equation}
These are respectively an integer \(\leq0\) and an integer
\(\geq0\).  For every old ray, \(\nu_\rho(P)=0\).  Define the fixed
stacky Newton pole divisor
\begin{equation}
 \mathscr P_{\Gamma,P}
 :=
 \sum_{\rho\in\overline\Gamma_P(1)\setminus\Gamma(1)}
 e_\rho^{\mathrm{st}}(P)\mathscr D_\rho,
\label{eq:toric-fixed-stacky-pole-divisor}
\end{equation}
omitting the zero coefficients.
Thus the coefficient of \(\mathscr D_\rho\) in a Yu lattice of form
degree \(a\) and filtration level \(\lambda\) is
 \(\left\lfloor
  (a-\lambda)c_\rho\bigl(-\nu_\rho(P)\bigr)
 \right\rfloor\).

\begin{theorem}[Toric family with fixed pole orders]
\label{thm:toric-family-fixed-pole-orders}
\label{thm:toric-fixed-pole-family}
For every \(h\in\mathcal U_P\), the rational extension of \(h\) to
\(\overline{\mathscr X}_{\Gamma,P}\) has polar divisor exactly
\(\mathscr P_{\Gamma,P}\), and
\[
 (\overline{\mathscr X}_{\Gamma,P},
  \mathscr D_{\Gamma,P},h)
\]
is a projective labelled NC rational stack compactification of
\((\mathscr U_\Gamma,h)\).

Moreover, on
\[
 \mathfrak X_{\Gamma,P}
 :=\overline{\mathscr X}_{\Gamma,P}\times\mathcal U_P
\]
the universal potential has divisorial polar part
\[
 \mathfrak P_{\Gamma,P}
 :=\mathscr P_{\Gamma,P}\times\mathcal U_P.
\]
Thus the product family, with boundary
\(\mathscr D_{\Gamma,P}\times\mathcal U_P\), satisfies
\cref{ass:fixed-pole-orders-dm-family}.
\end{theorem}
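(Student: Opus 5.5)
The plan is to work on the standard affine toric charts of \(\overline{\mathscr X}_{\Gamma,P}\), one for each maximal cone \(\sigma\in\overline\Gamma_P\), and read off the divisor of \(h\) directly. By \cref{prop:toric-adapted-stack-compactification}(iii), \(\psi_P\) is linear on \(\sigma\). Choose \(m_\sigma\in P\cap M\) with \(\psi_P|_\sigma=\langle\cdot,m_\sigma\rangle\); it can be taken to be a vertex of \(P\). Then write
\[
 h=x^{m_\sigma}\,g_\sigma,\qquad
 g_\sigma=\sum_{m\in P\cap M}a_mx^{m-m_\sigma},
\]
where every exponent \(m-m_\sigma\) pairs nonnegatively with \(\sigma\). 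On the coarse chart \(\operatorname{Spec}\mathbb C[\sigma^\vee\cap M]\), pulled back to the smooth Cox chart \(\mathbb A^k\times\mathbb G_m^{\ell}\) (modulo a finite group), the function \(g_\sigma\) is regular. The coordinate \(z_\rho\) attached to a ray \(\rho\) enters \(x^m\) with exponent \(c_\rho\langle v_\rho,m\rangle\). Hence \(x^{m_\sigma}\) contributes exactly \(-e^{\mathrm{st}}_\rho(P)\) along \(\mathscr D_\rho\), because \(\langle v_\rho,m_\sigma\rangle=\nu_\rho(P)\).

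The first claim is that \(g_\sigma\) has no common factor with the boundary monomial. Along \(\mathscr D_\rho\), the restriction of \(g_\sigma\) is the face polynomial \(\overline h_F\) for the face \(F\) of \(P\) minimizing \(v_\rho\), and this is nonzero because \(\Delta_\infty(h)=P\) forces vertex coefficients to be nonzero. So the polar divisor is exactly \(\mathscr P_{\Gamma,P}\): rays with \(\nu_\rho(P)=0\) contribute nothing, and the old rays have \(\nu_\rho=0\) by \eqref{eq:toric-support-function-zero}.

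Next I verify the NC condition of \cref{def:good-rational-stack-compactification} near \(|\mathscr P|\). Fix a torus orbit \(O_\tau\) inside the polar locus, where \(\tau\preceq\sigma\) contains a ray with \(\nu_\rho<0\). The restriction of \(g_\sigma\) to \(O_\tau\) is \(\overline h_F\), where \(F\) is the face of \(P\) on which \(\psi_P\) is attained for \(u\) in the relative interior of \(\tau\). Such an \(F\) does not contain \(0\), since \(\psi_P(u)<0\). Nondegeneracy says \(Z(\overline h_F)\subset T_F\) is empty or smooth, and \(O_\tau\to T_F\) is a smooth surjection. Now apply the standard toric Bernstein--Kushnirenko argument. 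Using the \(T_N\)-criterion stated after \eqref{eq:toric-effective-face-polynomial}, at a point of \(O_\tau\cap Z(g_\sigma)\) some derivative of \(g_\sigma\) in an orbit direction is nonzero. Hence \(Z(g_\sigma)\) is smooth there and transverse to every stratum closure through the point. This makes \(Z+\mathscr D\) reduced NC on a neighbourhood of the polar locus, giving the local form \(z/(x_1^{e_1}\cdots x_r^{e_r})\) or, off \(Z\), \(1/(x_1^{e_1}\cdots x_r^{e_r})\). Projectivity of the coarse space and the labelled SNC property come from \cref{prop:toric-adapted-stack-compactification}. I expect this step to be the main obstacle. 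The care needed is in the transversality bookkeeping on the stacky Cox chart, namely checking that the finite quotient and the factors \(c_\rho\) do not spoil smoothness. I will handle this by doing everything on the Cox chart, which is étale-locally the atlas.

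For the family statement, the same chart computation applies over \(\mathcal U_P\) with the coefficients \(A_m\) as variables. The universal potential factors as \(x^{m_\sigma}G_\sigma\), with \(G_\sigma\) regular on the chart times \(\mathcal U_P\). Its pole order along \(\mathscr D_\rho\times\mathcal U_P\) is bounded by \(e^{\mathrm{st}}_\rho(P)\), and equality holds on every fibre by the first part. The remaining conditions of \cref{ass:fixed-pole-orders-dm-family} are immediate in the product situation of \cref{rem:fixed-pair-product-family}. The base \(\mathcal U_P\) is smooth connected of finite type as recalled above. The product \(\overline{\mathscr X}_{\Gamma,P}\times\mathcal U_P\to\mathcal U_P\) is proper smooth and tame, since the stabilizers are finite abelian in characteristic zero. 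The divisor is relative SNC because it is a product. Item (iv) is exactly the fibrewise statement just proved.
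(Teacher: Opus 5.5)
Your proposal is correct and follows essentially the same route as the paper. You compute character valuations on the Cox atlas as \(c_\rho\langle v_\rho,m\rangle\), and you use that the initial face polynomial does not vanish along each polar \(\mathscr D_\rho\), because nonzero vertices of \(P=\Delta_\infty(h)\) carry nonzero coefficients. You then run the orbitwise Newton-nondegeneracy and transversality argument for the normalized function \(x^{-m_\sigma}h\) on the smooth Cox atlas, and treat the universal family by the same initial-form argument in the coefficients \(A_m\). The only difference is organizational: you factor \(x^{m_\sigma}\) on maximal-cone charts and restrict to the orbits of faces, whereas the paper works directly with the face \(F_\sigma\) attached to an arbitrary cone.
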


\begin{proof}
The support condition makes \(h\) regular on
\(\mathscr U_\Gamma\).  On a Cox atlas, the character \(x^m\) is a
rational monomial whose exponent in the \(\rho\)-coordinate is
\(\langle\overline b_\rho,m\rangle\).  Hence
\begin{equation}
 \operatorname{ord}_{\mathscr D_\rho}(x^m)
 =c_\rho\langle v_\rho,m\rangle.
\label{eq:toric-stack-valuation-character}
\end{equation}
If \(\nu_\rho(P)<0\), its minimizing face
\[
 F_\rho:=\{m\in P\mid
             \langle v_\rho,m\rangle=\nu_\rho(P)\}
\]
does not contain \(0\).  It contains a vertex \(v\ne0\), and
\(\Delta_\infty(h)=P\) gives \(a_v\ne0\).  After factoring \(x^v\), the
initial coefficient is the nonzero Laurent polynomial
\(x^{-v}h_{F_\rho}\) on the dense torus of \(\mathscr D_\rho\), so
there is no cancellation at its generic point.  Formula
\eqref{eq:toric-stack-valuation-character} therefore gives
\[
 \operatorname{ord}_{\mathscr D_\rho}(h)
 =c_\rho\nu_\rho(P).
\]
When \(\nu_\rho(P)=0\), no pole occurs.  This proves that the polar
divisor is exactly \eqref{eq:toric-fixed-stacky-pole-divisor}.
The same initial-form argument with the coefficient functions
\(A_m\) proves the asserted equality on the universal family.

It remains to check the NC rational condition near the polar locus.
For a cone \(\sigma\in\overline\Gamma_P\), linearity of \(\psi_P\)
defines the face
\[
 F_\sigma
 :=\{m\in P\mid
       \langle u,m\rangle=\psi_P(u)
       \text{ for every }u\in\sigma\}.
\]
The orbit \(\mathscr O_\sigma\) lies in
\(|\mathscr P_{\Gamma,P}|\) exactly when \(0\notin F_\sigma\).
Choose \(m_\sigma\in F_\sigma\cap M\).  On the affine chart of
\(\sigma\), the normalized function \(x^{-m_\sigma}h\) is regular,
and its restriction to the orbit is, up to a unit, the pullback of
\(\overline h_{F_\sigma}\) along the smooth torus morphism
\[
 \operatorname{Spec}\mathbb C[M\cap\sigma^\perp]
 \longrightarrow T_{F_\sigma}.
\]
If the orbit is polar, Newton nondegeneracy at infinity makes the
corresponding zero hypersurface empty or smooth and reduced.  After
pullback to a smooth Cox atlas, its differential has a nonzero
component in an orbit direction at every point of its zero locus.
Together with the toric boundary coordinates, the orbit coordinates
form a regular system of parameters.  Hence the zero divisor is
smooth near the orbit and meets every boundary stratum transversely.
This is the
standard toric Newton argument of
\cite[Proposition~4.3]{Yu}; the smooth torus map handles a
lower-dimensional face, while the Cox atlas transfers the criterion
to the Deligne--Mumford stack.  Thus the required zero--pole condition
holds in a neighbourhood of the polar locus.

The remaining assertions of
\cref{ass:fixed-pole-orders-dm-family} now follow from the standard
properties of \(\mathcal U_P\) recorded above and from
\cref{prop:toric-adapted-stack-compactification}: the base is smooth
and connected, the compactified family is a proper smooth product
with projective coarse fibres, and its labelled boundary is relative
SNC.
\end{proof}

\begin{corollary}[Coefficient bundles for toric Newton families]
\label{cor:toric-newton-coefficient-bundles}
All conclusions of
\cref{thm:irregular-hodge-bundles-fixed-pole-orders} hold for the
universal potential on \(\mathscr U_\Gamma\times\mathcal U_P\).  In
particular, its twisted de Rham groups, canonical irregular Hodge
levels, and rational graded pieces form vector bundles,
commute with arbitrary coefficient base change, and carry the
integrable connection and shifted transversality described there.
This proves the ordinary assertion of
\cref{thm:introduction-coefficient-invariance}.
\end{corollary}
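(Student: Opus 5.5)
The plan is to reduce the corollary directly to \cref{thm:irregular-hodge-bundles-fixed-pole-orders}. Everything needed has already been assembled; the only work is to check that the toric product family is an instance of \cref{ass:fixed-pole-orders-dm-family}.

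\textbf{Verifying the hypotheses.} By \cref{thm:toric-family-fixed-pole-orders}, the product data
\[
 \bigl(\overline{\mathscr X}_{\Gamma,P}\times\mathcal U_P,\;
 \mathscr D_{\Gamma,P}\times\mathcal U_P,\;
 \mathfrak P_{\Gamma,P},\;
 \mathcal W_P\bigr)
\]
over \(S=\mathcal U_P\) satisfy that definition. The individual inputs are as follows.
\begin{enumerate}[label=\textup{(\alph*)}]
\item The base \(\mathcal U_P\) is smooth, connected, and of finite type, being a nonempty Zariski-open subset of the affine space \(V_P\), irreducible by the discriminant argument recorded in \cref{subsec:toric-newton-polytopes-at-infinity}.
\item The compactified stack is a tame smooth proper Deligne--Mumford stack with projective coarse space. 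Tameness holds because we are in characteristic zero; the remaining properties come from \cref{prop:toric-adapted-stack-compactification}.
\item The polar multiplicities \(e^{\mathrm{st}}_\rho(P)\) depend only on \(P\), and the fibrewise polar divisor equals \(\mathscr P_{\Gamma,P}\) at every \(h\in\mathcal U_P\).
\end{enumerate}
The product family is an instance of \cref{rem:fixed-pair-product-family}.

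\textbf{Applying the theorem.} With \(S=\mathcal U_P\), \cref{thm:irregular-hodge-bundles-fixed-pole-orders} gives:
\begin{enumerate}[label=\textup{(\alph*)}]
\item local freeness of \(\mathcal H^q_S\), \(\mathcal I^q_\lambda\), and \(\mathcal E^q_\lambda\);
\item their compatibility with arbitrary finite-type base change \(T\to\mathcal U_P\), which is exactly ``coefficient base change'';
\item the connection and shifted transversality;
\item constancy of all dimensions.
\end{enumerate}

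\textbf{Identifying the fibres.} The fibre at \(h\) must be identified with the invariants of \((\mathscr U_\Gamma,h)\) itself. The open part of the product family is \(\mathscr U_\Gamma\times\mathcal U_P\) with potential \(\mathcal W_P\), whose restriction to \(h\) is \(h\). The theorem identifies fibres with \(H_{\mathrm{dR}}\) and \(F_{\mathrm{irr}}\) of \((\mathfrak U_s,\mathcal W_s)\). These are canonical invariants of the open model by \cref{thm:companion-compactification-comparison}, so they do not depend on the adapted compactification chosen.

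\textbf{Expected difficulty.} No step is a real obstacle. The only point needing care is that the fibrewise NC rational condition and the exact pole equality hold at every geometric point of \(\mathcal U_P\), not merely generically. This is exactly the content of \cref{thm:toric-family-fixed-pole-orders}: nondegeneracy is imposed pointwise on \(\mathcal U_P\), and \(\Delta_\infty(h)=P\) forces nonvanishing vertex coefficients.
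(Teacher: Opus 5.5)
Your proposal is correct and matches the paper's proof, which simply applies \cref{thm:irregular-hodge-bundles-fixed-pole-orders} to the product family of \cref{thm:toric-family-fixed-pole-orders}. Your extra checks (tameness in characteristic zero, and identifying the fibres with the invariants of the open model) are consistent with what those two results already provide.
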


\begin{proof}
Apply \cref{thm:irregular-hodge-bundles-fixed-pole-orders} to
\cref{thm:toric-family-fixed-pole-orders}.
\end{proof}

\subsection{Inertia sectors and orbifold invariance}
\label{subsec:toric-inertia-orbifold-conclusion}

It remains to verify the fixed-pole compactification hypothesis for
the inertia sectors.  Write the
inertia stack as a finite disjoint union of connected components
\[
 I\mathscr U_\Gamma
 =\coprod_{\gamma\in\mathcal B}\mathscr U_\gamma,
 \qquad
 e_\gamma\colon\mathscr U_\gamma\longrightarrow\mathscr U_\Gamma,
\]
and let \(a_\gamma\in\mathbb Q_{\geq0}\) be the age defined by
\eqref{eq:general-relative-sector-age}.  Let
\(\mathscr U_\gamma^{\mathrm{rig}}\) be the rigidification of
\(\mathscr U_\gamma\) by the subgroup of its generic stabilizer that
acts trivially on the entire sector; we call it the \emph{effective
rigidification}.

\begin{proposition}[Toric sector restriction]
\label{prop:toric-sector-restriction}
For every \(\gamma\in\mathcal B\), the following hold.
\begin{enumerate}[label=\textup{(\roman*)}]
\item The sector \(\mathscr U_\gamma\) is a smooth quasiprojective
  toric Deligne--Mumford stack, possibly with a torus factor and a
  finite generic gerbe.  The stack
  \(\mathscr U_\gamma^{\mathrm{rig}}\) is again of this form.
\item There is a cone \(\sigma_\gamma\in\Gamma\) such that the
  character lattice and Newton polytope of the sector are
  \begin{equation}
   M_\gamma:=M\cap\sigma_\gamma^\perp,
   \qquad
   P_\gamma:=P\cap(M_\gamma)_{\mathbb R}.
  \label{eq:toric-sector-newton-datum}
  \end{equation}
  The polytope \(P_\gamma\) is a face of \(P\) containing \(0\),
  and \((\Gamma_\gamma,P_\gamma)\) satisfies the support condition
  \[
   |\Gamma_\gamma|
   \subseteq\operatorname{Cone}(P_\gamma)^\vee.
  \]
\item Coefficient restriction gives a linear map
  \begin{equation}
   \operatorname{res}_\gamma\colon V_P\longrightarrow V_{P_\gamma},
   \qquad
   \sum_ma_mx^m\longmapsto
   \sum_{m\in P_\gamma\cap M}a_mx^m,
  \label{eq:toric-sector-coefficient-restriction}
  \end{equation}
  such that
  \[
   e_\gamma^*h=\operatorname{res}_\gamma(h),
   \qquad
   \operatorname{res}_\gamma(\mathcal U_P)
   \subseteq\mathcal U_{P_\gamma}.
  \]
\item The product family of restricted potentials on
  \(\mathscr U_\gamma\times\mathcal U_P\) admits compactified data
  satisfying
  \cref{ass:sectorwise-fixed-pole-orders}.  The same is true
  for \(\mathscr U_\gamma^{\mathrm{rig}}\).
\end{enumerate}
\end{proposition}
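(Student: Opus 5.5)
We first describe the sectors of \(\mathscr U_\Gamma\) combinatorially, using the Borisov--Chen--Smith description of inertia \cite[Section~3]{BCS}.  The components of \(I\mathscr U_\Gamma\) are indexed by box elements \(v\in\operatorname{Box}(\boldsymbol\Sigma)\) together with the torsion of \(\mathbf L\); write \(\sigma_\gamma\in\Gamma\) for the minimal cone whose box contains the element.  BCS identify \(\mathscr U_\gamma\) with the closed toric substack attached to the quotient stacky fan \(\Sigma/\sigma_\gamma\) (times \(T_Q\)).  Its rays are the images of the rays of the star of \(\sigma_\gamma\), and its lattice is \(\mathbf L/\langle\beta(\sigma_\gamma)\rangle\oplus Q\).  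This quotient stacky fan is again simplicial and quasiprojective, and its cokernel is finite.  So \(\mathscr U_\gamma\) is a smooth quasiprojective toric DM stack, possibly with a torus factor and a finite generic gerbe.  Rigidifying by the subgroup of the generic stabilizer acting trivially only removes torsion from the lattice \(\mathbf L_\gamma\), so \(\mathscr U_\gamma^{\mathrm{rig}}\) is again of this form.  This gives (i).

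For (ii), the coarse dense torus of the sector is the orbit \(O(\sigma_\gamma)\), whose character lattice is \(M_\gamma=M\cap\sigma_\gamma^\perp\).  The support condition says that \(\langle u,m\rangle\ge0\) for \(u\in\sigma_\gamma\) and \(m\in P\).  Hence \(P\cap\sigma_\gamma^\perp\) is the face of \(P\) on which a relative interior point of \(\sigma_\gamma\) attains its minimum \(0\), and this face contains \(0\).  The fan \(\Gamma_\gamma\) lives in \(N/\operatorname{span}\sigma_\gamma\), where the star cones are the images of cones \(\tau\supseteq\sigma_\gamma\).  The support condition for \((\Gamma_\gamma,P_\gamma)\) is then inherited directly from that for \((\Gamma,P)\).

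For (iii), we compute \(e_\gamma^*x^m\).  On the Cox chart, \(x^m\) restricts to the closed orbit stratum as \(x^m\) if \(m\in\sigma_\gamma^\perp\), and to zero otherwise.  The zero case uses that \(\langle\overline b_\rho,m\rangle>0\) for some ray \(\rho\) of \(\sigma_\gamma\), since all pairings are \(\ge0\) by the support condition.  Evaluation \(e_\gamma\) factors through this stratum, which gives \(e_\gamma^*h=\operatorname{res}_\gamma(h)\).

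Next we check \(\operatorname{res}_\gamma(\mathcal U_P)\subseteq\mathcal U_{P_\gamma}\).  The vertices of the face \(P_\gamma\) are vertices of \(P\), so their coefficients are nonzero and \(\Delta_\infty(\operatorname{res}_\gamma h)=P_\gamma\).  Faces of \(P_\gamma\) avoiding \(0\) are faces of \(P\) avoiding \(0\), and for such a face \(F\) the polynomials \(\overline h_F\) coincide.  Hence nondegeneracy is inherited.

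For (iv), apply \cref{prop:toric-adapted-stack-compactification} and \cref{thm:toric-family-fixed-pole-orders} to the pair \((\Gamma_\gamma,P_\gamma)\).  The hypotheses are exactly (i)--(ii).  This gives a compactification \(\overline{\mathscr X}_{\Gamma_\gamma,P_\gamma}\times\mathcal U_{P_\gamma}\) with fixed pole orders.  Pull it back along the morphism \(\operatorname{res}_\gamma\colon\mathcal U_P\to\mathcal U_{P_\gamma}\).  Base change preserves every condition in \cref{ass:fixed-pole-orders-dm-family}: the base \(\mathcal U_P\) is smooth and connected, and the fibrewise conditions are pointwise.  By (iii), the open part of the pulled-back family is \((\mathscr U_\gamma\times\mathcal U_P,e_\gamma^*\mathcal W_P)\).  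The rigidified case is identical, because rigidification by a trivially acting generic subgroup does not change \(M_\gamma\) or \(P_\gamma\).

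The main obstacle is the careful identification in (i)--(iii) of the sector with the quotient stacky fan, including its lattice and generic gerbe.  In particular, one has to check that the characters of the sector's coarse torus are exactly \(M\cap\sigma_\gamma^\perp\), and not a finite-index sublattice.  I would settle this first on the Cox atlas: the fixed locus of a box element is a coordinate subspace, and the subgroup fixing it is computed explicitly.
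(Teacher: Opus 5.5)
Your proposal is correct and follows essentially the same route as the paper. Both arguments use the BCS box decomposition to produce the minimal cone \(\sigma_\gamma\), take a relative-interior point of \(\sigma_\gamma\) to show \(P_\gamma\) is a face containing \(0\), restrict monomials on a toric chart, use faces of a face for nondegeneracy, and pull back the sector's toric fixed-pole family along \(\operatorname{res}_\gamma\).

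The only difference is cosmetic: the paper applies BCS to the complete stacky fan of the adapted compactification and then restricts to \(\Gamma\). One correction: for non-complete \(\Gamma\) your quotient stacky fan need not have finite cokernel, and this is exactly where the extra torus factor has to be split off, so you should not assert finiteness there.
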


\begin{proof}
Apply the box decomposition of
\cite[Lemma~4.6 and Proposition~4.7]{BCS} to the complete stacky fan
from \cref{prop:toric-adapted-stack-compactification}, and restrict
it to the unchanged open fan.  This proves \textup{(i)} and attaches
to each nonempty connected sector a minimal cone
\(\sigma_\gamma\).  Rigidification removes precisely the subgroup
specified above and does not change the rational fan, the coarse
character lattice, or the tangent representation defining the age.

Choose \(u_\gamma\in\operatorname{relint}(\sigma_\gamma)\), with
\(u_\gamma=0\) for the untwisted sector.  The support condition gives
\(\langle u_\gamma,m\rangle\geq0\) for every \(m\in P\), and equality
holds precisely for \(m\in\sigma_\gamma^\perp\).  Hence
\[
 P_\gamma
 =\{m\in P\mid\langle u_\gamma,m\rangle=0\}
\]
is a face containing \(0\).  A cone of the sector fan comes from a
cone of \(\Gamma\) containing \(\sigma_\gamma\); passing to the
quotient by \(\operatorname{Span}(\sigma_\gamma)\) proves the stated
support condition.

On a toric chart, restriction to the sector sends \(x^m\) to itself
when \(m\in\sigma_\gamma^\perp\) and to zero otherwise.  This proves
the formula for \(e_\gamma^*h\).  Every face of the face
\(P_\gamma\) is a face of \(P\), and every vertex of
\(P_\gamma\) is a vertex of
\(P\).  Hence \(\Delta_\infty(h)=P\) and Newton nondegeneracy at
infinity imply
\[
 \Delta_\infty(e_\gamma^*h)=P_\gamma
\]
and Newton nondegeneracy at infinity for the restricted potential.
This proves \textup{(iii)}.

Finally, apply \cref{thm:toric-family-fixed-pole-orders} to the toric
sector and \(P_\gamma\), and pull the resulting product family back along
\[
 \operatorname{res}_\gamma|_{\mathcal U_P}\colon
 \mathcal U_P\longrightarrow\mathcal U_{P_\gamma}.
\]
This supplies precisely the compactified data required in
\cref{ass:sectorwise-fixed-pole-orders}.  The rational fan and
coarse character lattice are unchanged by effective rigidification,
so the same construction applies to
\(\mathscr U_\gamma^{\mathrm{rig}}\).
\end{proof}

\begin{corollary}[Orbifold coefficient invariance]
\label{cor:toric-orbifold-coefficient-invariance}
For the universal toric Newton family over \(\mathcal U_P\), all
conclusions of
\cref{thm:orbifold-bundles-fixed-pole-orders} hold.  Thus, for every
\(r,\lambda\in\mathbb Q\), the age-shifted orbifold twisted de Rham
groups, their canonical filtered levels, and their rational graded
pieces form vector bundles over \(\mathcal U_P\), commute with
arbitrary coefficient base change, and carry the integrable
connection with shifted transversality.  Their ranks, and hence the
orbifold irregular Hodge numbers, are independent of the coefficients
in \(\mathcal U_P\).  The same conclusions hold sectorwise after
effective rigidification.  This proves the orbifold assertion of
\cref{thm:introduction-coefficient-invariance}.
\end{corollary}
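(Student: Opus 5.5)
The plan is to reduce the corollary directly to \cref{thm:orbifold-bundles-fixed-pole-orders}. Its only hypotheses are that the base family is a smooth family with fixed pole orders and that \cref{ass:sectorwise-fixed-pole-orders} holds. The first is \cref{thm:toric-family-fixed-pole-orders}: the product family \(\mathfrak X_{\Gamma,P}=\overline{\mathscr X}_{\Gamma,P}\times\mathcal U_P\) satisfies \cref{ass:fixed-pole-orders-dm-family}.

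For the second, I would identify the open inertia sectors of the relative family with \(\mathscr U_\gamma\times\mathcal U_P\). The open part is a product, and since \(\mathcal U_P\) is a scheme, the relative inertia is \(I\mathscr U_\Gamma\times\mathcal U_P\). Its connected components are \(\mathscr U_\gamma\times\mathcal U_P\), because \(\mathcal U_P\) is connected (and irreducible), as recorded after \cref{def:toric-newton-nondegenerate-at-infinity}. These components are the open parts of the compactified sectors \(\mathfrak X_\gamma\) indexed by \(\mathcal B\). By \cref{prop:toric-sector-restriction}(iii), the restricted universal potential is \(\operatorname{res}_\gamma\) of the universal one. By \cref{prop:toric-sector-restriction}(iv), the product family over \(\mathcal U_P\) obtained by pulling back the sector Newton family along \(\operatorname{res}_\gamma|_{\mathcal U_P}\colon\mathcal U_P\to\mathcal U_{P_\gamma}\) gives the compactified data \eqref{eq:general-sector-fixed-pole-orders-family}.

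One point needs care: this pullback along a morphism of bases must again be a smooth family with fixed pole orders. The base \(\mathcal U_P\) is smooth and connected, the compactification is a constant product, and the labelled boundary is constant. The pole coefficients \(e^{\mathrm{st}}_\rho(P_\gamma)\) depend only on \(P_\gamma\). Fibrewise polar exactness and the NC rational condition hold because every fibre potential lies in \(\mathcal U_{P_\gamma}\), where \cref{thm:toric-family-fixed-pole-orders} applies pointwise. With both hypotheses verified, \cref{thm:orbifold-bundles-fixed-pole-orders} yields the bundles, fibre identifications, base change, connection, transversality, and constancy of \(h^{r,\lambda}_{\mathrm{orb}}\).

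For the rigidified statement, I would repeat the argument with \(\mathscr U_\gamma^{\mathrm{rig}}\), again using \cref{prop:toric-sector-restriction}(iv). Each sector is then a single-sector family, so \cref{thm:irregular-hodge-bundles-fixed-pole-orders} applies to it directly. The ages are unchanged, since rigidification does not alter the tangent representation, so the age-shifted direct sums are formed exactly as before.

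I expect the main obstacle to be the bookkeeping that matches the sectors of the compactified relative family, as defined via \(\mathcal B\subseteq\mathcal B^{\mathrm c}\), with the toric sectors of \(\mathscr U_\Gamma\). It must also be checked that the assumption only requires compactified data whose open part is \((\mathfrak U_\gamma,\mathcal W_\gamma)\), and does not require it to be \(\mathfrak X_\gamma\) itself. Since \cref{ass:sectorwise-fixed-pole-orders} asks only for an identification of open parts, this should go through. Zero–pole cancellation on \(\mathfrak X_\gamma\) is avoided by using the sector's own adapted compactification \(\overline{\mathscr X}_{\Gamma_\gamma,P_\gamma}\).
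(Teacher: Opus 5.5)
Your proposal is correct and takes the same approach as the paper. The paper's proof simply combines \cref{thm:toric-family-fixed-pole-orders} and \cref{prop:toric-sector-restriction} (especially part \textup{(iv)}, which supplies the sectorwise fixed-pole data on each sector's own adapted compactification, pulled back along \(\operatorname{res}_\gamma|_{\mathcal U_P}\)) with \cref{thm:orbifold-bundles-fixed-pole-orders}. The extra bookkeeping you give is what the paper leaves implicit: matching the relative inertia sectors with \(\mathscr U_\gamma\times\mathcal U_P\), and checking that pullback along \(\mathcal U_P\to\mathcal U_{P_\gamma}\) preserves the fixed-pole conditions.
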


\begin{proof}
Combine
\cref{thm:toric-family-fixed-pole-orders,prop:toric-sector-restriction}
with \cref{thm:orbifold-bundles-fixed-pole-orders}.
\end{proof}

\subsection{The Harder--Lee combinatorial formula}
\label{subsec:toric-harder-lee-formula}

We use Harder--Lee's notation without repeating its construction.  For
an ordered Clarke dual pair
\((\boldsymbol\Sigma,\check{\boldsymbol\Sigma})\), let
\(\Xi^{\lambda,\mu}\) be the bigraded cellular sheaves on
\((\boldsymbol\Sigma\oplus\check{\boldsymbol\Sigma})_0\), and let
\[
 H^{\lambda,\mu}
 \bigl((\boldsymbol\Sigma\oplus
        \check{\boldsymbol\Sigma})_0\bigr)
 :=\bigoplus_j H^j(\Xi^{\lambda,\mu-j})
\]
be the associated combinatorial Hodge spaces of
\cite[Definition~5.6 and equation~(13)]{HarderLee}.  Write
\(\Xi_{\mathrm{id}}^{\lambda,\mu}\) for the summand in which the
first box-space factor \(B_{c_1}\), corresponding to the inertia of
\(T(\boldsymbol\Sigma)\), is generated by the zero box element; the
second factor \(B_{c_2}\), carrying the Newton grading, is retained.
Set
\begin{equation}
 H_{\mathrm{id}}^{\lambda,\mu}
 \bigl((\boldsymbol\Sigma\oplus
        \check{\boldsymbol\Sigma})_0\bigr)
 :=\bigoplus_j
H^j(\Xi_{\mathrm{id}}^{\lambda,\mu-j}).
\label{eq:toric-harder-lee-identity-combinatorial-space}
\end{equation}

\begin{lemma}[Identity-sector summand]
\label{lem:toric-harder-lee-identity-summand}
The spaces \(\Xi_{\mathrm{id}}^{\lambda,\mu}\) form a canonical
bigraded direct-summand cellular sheaf of
\(\Xi^{\lambda,\mu}\).  Under the \v{C}ech identification in the
proof of \cite[Theorem~5.14]{HarderLee}, its cohomology is the
ordinary tropical Hodge space of
\(T(\boldsymbol\Sigma,
w(\check{\boldsymbol\Sigma})_\varphi)_0\).
\end{lemma}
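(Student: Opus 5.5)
The plan is to work directly with Harder--Lee's construction of the cellular sheaves \(\Xi^{\lambda,\mu}\) on \((\boldsymbol\Sigma\oplus\check{\boldsymbol\Sigma})_0\) \cite[Definition~5.6]{HarderLee}. Their stalks are built as tensor products of a box-space factor \(B_{c_1}\), attached to the inertia of \(T(\boldsymbol\Sigma)\), a box-space factor \(B_{c_2}\), carrying the Newton grading, and exterior-algebra factors on the corresponding cones. The bigrading is the age-shifted Hodge bigrading.

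First I will check that the decomposition of \(B_{c_1}\) into its zero-box line and the span of the nonzero box elements is preserved by every restriction map of the cellular sheaf. For a face inclusion \(\tau\preceq\sigma\) in \((\boldsymbol\Sigma\oplus\check{\boldsymbol\Sigma})_0\), the restriction on the first factor is induced by the map of box sets \(\operatorname{Box}(\sigma_1)\to\operatorname{Box}(\tau_1)\cup\{0\}\). This map sends a box element either to a box element of the face or to zero, and it sends the zero element to the zero element. A box element of \(\sigma_1\) that is nonzero either survives or is killed, but it never becomes the zero box element of \(\tau_1\). So the nonzero part maps into the nonzero part, the zero-box part maps into the zero-box part, and the projection onto the zero-box summand commutes with restriction. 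The age shift attached to the zero box element is zero, so the bigrading is respected. This gives a canonical bigraded decomposition \(\Xi^{\lambda,\mu}=\Xi_{\mathrm{id}}^{\lambda,\mu}\oplus\Xi_{\mathrm{tw}}^{\lambda,\mu}\) of cellular sheaves, and consequently the cohomology of \(\Xi_{\mathrm{id}}\) is a direct summand of that of \(\Xi\).

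Second, I will trace the \v{C}ech identification in the proof of \cite[Theorem~5.14]{HarderLee}. That identification computes orbifold tropical Hodge cohomology of \(T(\boldsymbol\Sigma,w(\check{\boldsymbol\Sigma})_\varphi)_0\) as a direct sum over twisted sectors, indexed by the box elements of \(\boldsymbol\Sigma\). The sector indexed by a box element \(c\) is computed by the sheaf of \(\Xi\) in which the \(B_{c_1}\)-factor is the line spanned by \(c\). I will verify that this indexing is compatible with the decomposition above. It then follows that the zero-box summand computes exactly the sector of the zero box element, namely the untwisted sector. Its cohomology is therefore the ordinary tropical Hodge space, with no age shift and with the full Newton factor \(B_{c_2}\) retained.

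The main obstacle is the second step. It requires confirming that Harder--Lee's \v{C}ech comparison is built sectorwise, rather than only as a comparison of totals. I would first try to read off from their proof that the \v{C}ech cover is compatible with the inertia decomposition of the tropical toric stack. If it is not stated explicitly, I would argue by naturality: the comparison maps are equivariant for the grading of \(B_{c_1}\) by box elements. This equivariance forces them to respect the decomposition into box-element summands.
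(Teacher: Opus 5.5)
Your proposal is correct and follows essentially the paper's route. In both, the splitting \(B_{c_1}=\mathbb C[0_{c_1}]\oplus\bigoplus_{g\neq0}\mathbb C[g]\) is preserved by the face projections \(p(c_1,c_1')\). Hence projection onto the zero-box line commutes with the restriction maps \(\varpi\otimes p(c_1,c_1')\otimes p(c_2,c_2')\), and the resulting zero summand is matched, through the \v{C}ech identification in the proof of Harder--Lee's Theorem~5.14, with the ordinary (untwisted) tropical Jacobian complex.

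The paper handles the step you flag as the main obstacle concretely. It compares Harder--Lee's Proposition~4.15 with their local identification~(15) on the zero summand, where the first factor has age zero and contributes a single copy of \(\mathbb C\). Your naturality fallback, by contrast, presupposes the box-element equivariance of the comparison maps, which is exactly the sectorwise compatibility it is meant to establish.
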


\begin{proof}
For a stacky cone \(c\), let \(\gamma(g)\) denote the dimension of
the minimal cone containing a box element \(g\).  The decomposition
\[
 B_c
 =\mathbb C[0_c]
  \oplus
  \bigoplus_{\gamma(g)>0}\mathbb C[g]
\]
is preserved by every face projection \(p(c,c')\); this is precisely
the additional grading used in the proof of
\cite[Proposition~4.17]{HarderLee}.  The restriction maps in
\cite[Definition~5.6]{HarderLee} are
\(\varpi\otimes p(c_1,c'_1)\otimes p(c_2,c'_2)\), so projection of
the first box-space factor onto \(\mathbb C[0_{c_1}]\) commutes with
all restrictions.  This proves the direct-summand assertion.

For the model with ambient fan \(\boldsymbol\Sigma\), the first box
factor records its inertia sector, whereas the second is the
box-space factor in the ordinary tropical Jacobian sheaf.  On the
zero summand the former has age zero and contributes one copy of
\(\mathbb C\).  Comparing \cite[Proposition~4.15]{HarderLee} with
the local identification in equation~(15) of that paper therefore
identifies the resulting \v{C}ech complex with the ordinary tropical
Jacobian complex.  The asserted cohomological identification follows
by the proof of \cite[Theorem~5.14]{HarderLee}.
\end{proof}

\begin{corollary}[Harder--Lee formula from a fixed Newton datum]
\label{cor:toric-harder-lee-all-nondegenerate-coefficients}
Assume that \(Q=0\), that \(\mathbf L=L\) is free, and that \(P\) is
full-dimensional.  Suppose moreover that
\begin{equation}
 \Delta_\Gamma^{\mathrm{st}}
 :=
 \bigcup_{\sigma\in\Gamma}
 \operatorname{Conv}\!\left(
  \{0\}\cup
  \{\beta(e_\rho)\mid\rho\in\sigma(1)\}
 \right)
\label{eq:toric-harder-lee-gamma-polytope}
\end{equation}
is convex.  Define \(\boldsymbol\Sigma\) to have underlying fan
\(\Gamma\) and stacky ray vectors \(\beta(e_\rho)\).  Choose a
coherent star triangulation \(\mathscr T\) of the point configuration
\[
 A_P:=\{0\}\cup\operatorname{Vert}(P)
\]
based at \(0\), for example a coherent pulling triangulation with
\(0\) pulled first.  Define \(\check{\boldsymbol\Sigma}\) from the
fan
\begin{equation}
 \check\Sigma_{\mathscr T}
 :=
 \left\{
  \operatorname{Cone}(\tau\setminus\{0\})
  \mathrel{}\middle|\mathrel{}
 \tau\in\mathscr T,\ 0\in\tau
 \right\},
\label{eq:toric-harder-lee-check-sigma-construction}
\end{equation}
by taking each nonzero vertex
\(m\in\operatorname{Vert}(P)\setminus\{0\}\) as the stacky vector on
the ray \(\mathbb R_{\geq0}m\).
Then, for every \(h\in\mathcal U_P\), \(n\in\mathbb Z\), and
\(\lambda\in\mathbb Q\),
\begin{equation}
 \dim_{\mathbb C}
 \Gr_{F_{\mathrm{irr}}}^{\lambda}
 H_{\mathrm{dR}}^n(\mathscr U_\Gamma,h)
 =
 \dim_{\mathbb C}
 H_{\mathrm{id}}^{\lambda,n-\lambda}
 \bigl((\boldsymbol\Sigma\oplus
        \check{\boldsymbol\Sigma})_0\bigr).
\label{eq:toric-harder-lee-all-coefficients}
\end{equation}
In Harder--Lee's orbifold convention, for every
\(r,\lambda\in\mathbb Q\), one likewise has
\begin{equation}
 \dim_{\mathbb C}
 \Gr_{F_{\mathrm{orb}}}^{\lambda}
 H_{\mathrm{orb,dR}}^r(\mathscr U_\Gamma,h)
 =
 \dim_{\mathbb C}
 H^{\lambda,r-\lambda}
 \bigl((\boldsymbol\Sigma\oplus
        \check{\boldsymbol\Sigma})_0\bigr).
\label{eq:toric-harder-lee-orbifold-all-coefficients}
\end{equation}
\end{corollary}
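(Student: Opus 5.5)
The strategy is to move the Harder--Lee identification from the distinguished tropical one-parameter family to every \(h\in\mathcal U_P\) by coefficient invariance. First, I will check that the hypotheses produce exactly Harder--Lee's input. Since \(Q=0\), \(\mathbf L=L\) is free and \(\Delta_\Gamma^{\mathrm{st}}\) is convex, \(\boldsymbol\Sigma\) is a stacky fan with \(T(\boldsymbol\Sigma)\simeq\mathscr U_\Gamma\). Because \(\mathscr T\) is a coherent star triangulation of \(A_P\) based at \(0\) and \(P\) is full-dimensional, \(\check\Sigma_{\mathscr T}\) is a complete simplicial fan. Its stacky vectors are the nonzero vertices of \(P\), so \(\Delta_{\check\Sigma}^{\mathrm{st}}=P\), which is convex. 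The support condition \eqref{eq:toric-newton-support-condition} then says that \(\langle\beta(e_\rho),m\rangle\ge0\) for all vertices \(m\) of \(P\). I expect this to give, perhaps after checking Harder--Lee's sign conventions, precisely the Clarke duality pairing condition, so that \((\boldsymbol\Sigma,\check{\boldsymbol\Sigma})\) is an ordered Clarke dual pair. Coherence of \(\mathscr T\) supplies a convex piecewise-linear height function \(\varphi\), of the kind Harder--Lee use to define the tropical family \(w(\check{\boldsymbol\Sigma})_\varphi\).

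Second, I will apply Harder--Lee's main theorem \cite[Theorem~5.14 and \S\S8--9]{HarderLee}. For \(t\) in a punctured disc, the potential \(h_t=1+\sum_m u_mt^{\varphi(m)}x^m\), with sum over the nonzero vertices of \(P\) and generic \(u_m\), has orbifold irregular Hodge numbers equal to \(\dim H^{\lambda,r-\lambda}\). To transfer this to all of \(\mathcal U_P\), I need that \(h_t\in\mathcal U_P\) for small \(t\ne0\). The equality \(\Delta_\infty(h_t)=P\) is clear, because the vertex coefficients are nonzero and \(0\) has coefficient \(1\). Nondegeneracy at infinity for faces avoiding \(0\) should follow from genericity of the \(u_m\) once \(t\ne0\) is fixed. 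The reason is that such a face polynomial is a Laurent polynomial in the vertices of a face not containing \(0\), so its zero locus is smooth for generic coefficients. Alternatively, Harder--Lee's own nondegeneracy hypothesis on the smooth fibres can be used. Since \(\mathcal U_P\) is connected, \cref{cor:toric-orbifold-coefficient-invariance} makes every orbifold graded dimension constant on \(\mathcal U_P\), and \eqref{eq:toric-harder-lee-orbifold-all-coefficients} follows.

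Third, I will obtain \eqref{eq:toric-harder-lee-all-coefficients} by restricting to the identity sector. By \cref{lem:toric-harder-lee-identity-summand}, \(\Xi_{\mathrm{id}}\) is a direct summand whose cohomology is the ordinary tropical Hodge space, and Harder--Lee's comparison respects the sector decomposition. The identity sector has age \(0\), so it contributes \(\Gr^\lambda_{F_{\mathrm{irr}}}H^n_{\mathrm{dR}}\) in degree \(n\). Ordinary coefficient invariance (\cref{cor:toric-newton-coefficient-bundles}) then transports this to all \(h\in\mathcal U_P\).

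The main obstacle is the bookkeeping in the first two steps, that is, matching conventions exactly. I must confirm that the support condition is Harder--Lee's duality condition and that their sectorwise and age conventions agree with \eqref{eq:general-orbifold-filtered-bundle}. I must also confirm that their theorem indeed computes irregular Hodge numbers of a fibre lying in \(\mathcal U_P\) for the compactification-independent filtration, which \cref{thm:companion-compactification-comparison} guarantees. I would begin by checking the definition of Clarke duality against \eqref{eq:toric-newton-support-condition}.
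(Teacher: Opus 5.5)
Your overall route is the paper's. You build the Clarke dual pair, show that fibres of Harder--Lee's tropical family lie in \(\mathcal U_P\) for \(0<|\varepsilon|\ll1\), apply Harder--Lee there, and transport the equalities to all of \(\mathcal U_P\) by coefficient invariance on the connected locus. Two steps need repair, plus one smaller point about the ordinary formula.

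\emph{The support of \(\check\Sigma_{\mathscr T}\).} This fan is not complete. Since \(Q=0\) and the rays of \(\Gamma\) span \(N_{\mathbb R}\), the support condition \eqref{eq:toric-newton-support-condition} makes \(\operatorname{Cone}(P)^\vee\) full-dimensional. Hence \(\operatorname{Cone}(P)\) is pointed and \(0\) is a vertex of \(P\). A star triangulation at \(0\) then gives a simplicial fan with support exactly \(\operatorname{Cone}(P)\). This is not cosmetic: if \(\check\Sigma_{\mathscr T}\) were complete, the Clarke pairing condition would fail for every nonzero \(n\in|\Gamma|\). With \(|\check{\boldsymbol\Sigma}|=\operatorname{Cone}(P)\) and \(|\boldsymbol\Sigma|=|\Gamma|\), the pairing condition is literally the support condition, which is how the paper checks it.

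\emph{The order of quantifiers in \(h_t\in\mathcal U_P\).} Harder--Lee fix \(u\) first and then take \(0<|t|<\epsilon(u)\). Knowing that a generic \(u\) works for each fixed \(t\) does not produce one \(u\) that works at some \(t\) in the admissible range. The paper closes this as follows. Your Bertini argument, applied after the finite \'{e}tale map from \(T_F\) to the torus of the sublattice spanned by differences of vertices of \(F\), shows that
\(\mathcal V=\{(a_m)\in(\mathbb C^\times)^{\check{\boldsymbol\Sigma}[1]}\mid 1+\sum_ma_mx^m\in\mathcal U_P\}\)
is nonempty and Zariski open. Choose Harder--Lee's generic coefficients \(u\) inside \(\mathcal V\). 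The preimage of the complement of \(\mathcal V\) under \(t\mapsto(u_mt^{\varphi(m)})_m\) is then a Zariski-closed subset of \(\mathbb C^\times\) not containing \(1\). It is therefore finite, so every sufficiently small \(t\neq0\) gives \(h_t\in\mathcal U_P\).

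If you instead rely on Harder--Lee's own nondegeneracy hypothesis, you must still match their logarithmic critical-point criterion with smoothness of \(Z(\overline h_F)\subset T_F\). The paper does this with the identity \(\partial_{v_F}h_F=a_Fh_F\), for \(v_F\) satisfying \(\langle v_F,\cdot\rangle\equiv a_F\neq0\) on \(F\).

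\emph{The ordinary formula.} It does not follow from the orbifold equality by restriction, because an equality summed over sectors does not split by itself. What you need at the identity sector is Harder--Lee's ordinary theorem (their Theorem~5.15, proved as Corollary~9.22). The paper applies it directly to \(h_\varepsilon\) and then uses \cref{lem:toric-harder-lee-identity-summand}. The orbifold formula is obtained separately, from Harder--Lee's equation~(16) and the \v{C}ech comparison with the full \(\Xi^{\lambda,\mu}\).
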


\begin{proof}
The standard pulling construction with \(0\) first gives a coherent
star triangulation and uses every vertex of \(A_P\).  The cones in
\eqref{eq:toric-harder-lee-check-sigma-construction}, together with their
faces, form a simplicial fan with support
\(\operatorname{Cone}(P)\).  Since \(P\) is full-dimensional, its
stacky ray vectors span \(M_\mathbb Q\); hence
\(\check{\boldsymbol\Sigma}\) is a stacky fan in the sense of
\cite[\S2.2]{HarderLee}.  In particular,
\(\check{\boldsymbol\Sigma}\) is quasiprojective, and
\[
 \Delta_{\check{\boldsymbol\Sigma}}
 :=
 \bigcup_{c\in\check{\boldsymbol\Sigma}}
 \operatorname{Conv}\bigl(\{0\}\cup c[1]\bigr)
 =P.
\]
Thus \(\check{\boldsymbol\Sigma}\) is convex in the sense of
\cite[Definition~5.1]{HarderLee}.  The fan
\(\boldsymbol\Sigma\) is simplicial and quasiprojective by
the standing assumptions on \(\Gamma\), and its convexity is exactly
the hypothesis that
\eqref{eq:toric-harder-lee-gamma-polytope} is convex.

The support condition gives
\(\langle n,m\rangle\geq0\) for
\(n\in|\Gamma|\) and \(m\in P\).  Since
\[
 |\boldsymbol\Sigma|=|\Gamma|,
 \qquad
 |\check{\boldsymbol\Sigma}|=\operatorname{Cone}(P),
\]
the pair
\((\boldsymbol\Sigma,\check{\boldsymbol\Sigma})\) is Clarke dual in
the sense of \cite[Definition~5.2]{HarderLee}.  Equality of the
underlying fans and stacky ray vectors identifies the irrelevant
ideals and torus homomorphisms in the Cox quotient presentations,
giving
\[
 T(\boldsymbol\Sigma)\simeq\mathscr U_\Gamma.
\]

Put \(A=\{0\}\cup\check{\boldsymbol\Sigma}[1]\).  For every face
\(F\preceq P\), the set \(A\cap F\) contains the vertices of \(F\)
and hence its differences span a finite-index sublattice of \(M_F\).
The induced morphism from \(T_F\) to the torus determined by these
differences is therefore finite \'{e}tale.  Bertini's theorem applied
after this morphism shows that the coefficients for which the face
hypersurface is smooth form a nonempty Zariski-open subset of the
corresponding coefficient torus.  Since \(P\) has only finitely many
faces, the locus
\[
 \mathcal V_A
 :=\left\{(a_m)_{m\in\check{\boldsymbol\Sigma}[1]}
   \in(\mathbb C^\times)^{\check{\boldsymbol\Sigma}[1]}
   \mathrel{}\middle|\mathrel{}
   1+\sum_m a_mx^m\in\mathcal U_P\right\}
\]
is nonempty and Zariski open.

Coherence supplies, after multiplying by a positive integer, an
integral strictly convex \(\check{\boldsymbol\Sigma}\)-linear support
function \(\varphi\) inducing \(\mathscr T\).  Choose the
Harder--Lee coefficients \(u_m\) generically enough that
\((u_m)_m\in\mathcal V_A\).  The inverse image of the closed set
\((\mathbb C^\times)^{\check{\boldsymbol\Sigma}[1]}\setminus\mathcal V_A\)
under
\[
 \mathbb C^\times\longrightarrow
 (\mathbb C^\times)^{\check{\boldsymbol\Sigma}[1]},
 \qquad
 t\longmapsto
 \bigl(u_mt^{\varphi(m)}\bigr)_m,
\]
is a proper closed subset of \(\mathbb C^\times\), and is therefore
finite.  After shrinking the punctured disc, it contains no
\(\varepsilon\) with \(0<|\varepsilon|\ll1\).  For such
\(\varepsilon\), set
\[
 h_\varepsilon
 :=w(\check{\boldsymbol\Sigma})_{\varphi,\varepsilon}.
\]
All coefficients of
\(h_\varepsilon\) indexed by
\(\check{\boldsymbol\Sigma}[1]\) are nonzero, so
\(\Delta_\infty(h_\varepsilon)=P\); the definition of
\(\mathcal V_A\) then gives
\(h_\varepsilon\in\mathcal U_P\).

This face condition agrees with the
nondegeneracy convention of \cite[\S2.1]{HarderLee}.  If
\(0\notin F\), choose \(v_F\in N_\mathbb Q\) and
\(a_F\in\mathbb Q^\times\) such that
\(\langle v_F,m\rangle=a_F\) on \(F\).  The identity
\[
 \partial_{v_F}(h_\varepsilon)_F
 =a_F(h_\varepsilon)_F
\]
identifies their logarithmic critical-point criterion with the
smoothness of
\(Z\bigl(\overline{(h_\varepsilon)}_F\bigr)\subset T_F\).

The family
\[
 \bigl(T(\boldsymbol\Sigma),
       w(\check{\boldsymbol\Sigma})_\varphi\bigr)
\]
therefore satisfies the hypotheses of
\cite[Theorem~5.15]{HarderLee}, and its fibres with
\(0<|\varepsilon|\ll1\) lie in \(\mathcal U_P\).  Fix one such
\(\varepsilon\).
For \(h=h_\varepsilon\), \cite[Theorem~5.15]{HarderLee}, proved as
\cite[Corollary~9.22]{HarderLee}, expresses the irregular Hodge
numbers in terms of the ordinary tropical Hodge spaces of the central
fibre.  By \cref{lem:toric-harder-lee-identity-summand}, these are
\(H_{\mathrm{id}}^{\lambda,n-\lambda}
((\boldsymbol\Sigma\oplus\check{\boldsymbol\Sigma})_0)\).  This proves
\eqref{eq:toric-harder-lee-all-coefficients} for \(h_\varepsilon\).
By \cref{cor:toric-newton-coefficient-bundles}, the dimension of its
left-hand side is constant on the connected coefficient space
\(\mathcal U_P\).  This proves the equation for every
\(h\in\mathcal U_P\).

For \(h_\varepsilon\), the orbifold degeneration formula
\cite[equation~(16)]{HarderLee}---which uses the orbifold tropical
identification of \cite[Proposition~4.17]{HarderLee} and the extension
recorded in \cite[Remark~9.23]{HarderLee}---computes the orbifold
irregular Hodge numbers by the full orbifold tropical Jacobian
complex.  The \v{C}ech comparison in the proof of
\cite[Theorem~5.14]{HarderLee} identifies this complex with
\(\Xi^{\lambda,\mu}\), including both box-space factors.  This proves
\eqref{eq:toric-harder-lee-orbifold-all-coefficients} for
\(h_\varepsilon\); its extension to every \(h\in\mathcal U_P\) follows
from
\cref{cor:toric-orbifold-coefficient-invariance}.
\end{proof}

\bibliographystyle{amsalpha}
\providecommand{\bysame}{\leavevmode\hbox to3em{\hrulefill}\thinspace}
\providecommand{\MR}{\relax\ifhmode\unskip\space\fi MR }
\providecommand{\MRhref}[2]{\href{http://www.ams.org/mathscinet-getitem?mr=#1}{#2}
}
\providecommand{\href}[2]{#2}

\end{document}